\newtheorem{theorem}{Theorem}
\newtheorem{remark}{Remark}
\newtheorem{lemma}{Lemma}
\newtheorem{Remark}{Remark}
\newtheorem{corollary}{Corollary}
\newtheorem{definition}{Definition}
\newcommand{\F}{\mathbb{F}}
\newcommand{\cS}{\mathcal{S}}
\begin{document}
%
% paper title
% Titles are generally capitalized except for words such as a, an, and, as,
% at, but, by, for, in, nor, of, on, or, the, to and up, which are usually
% not capitalized unless they are the first or last word of the title.
% Linebreaks \\ can be used within to get better formatting as desired.
% Do not put math or special symbols in the title.
\title{Asymptotic Behavior of Conjunctive Boolean Networks Over Weakly Connected Digraphs}
%
%
% author names and IEEE memberships
% note positions of commas and nonbreaking spaces ( ~ ) LaTeX will not break
% a structure at a ~ so this keeps an author's name from being broken across
% two lines.
% use \thanks{} to gain access to the first footnote area
% a separate \thanks must be used for each paragraph as LaTeX2e's \thanks
% was not built to handle multiple paragraphs
%

\author{
	Xudong~Chen,~\IEEEmembership{Member,~IEEE,}
        Zuguang~Gao,~%\IEEEmembership{Fellow,~IEEE,} 
         and~Tamer~Ba\c{s}ar,~\IEEEmembership{Life~Fellow,~IEEE}% <-this % stops a space
\thanks{This research was supported in part by the Office of Naval Research (ONR) MURI Grant N 00014-16-1-2710. This work is based upon a preliminary version ``Asymptotic behavior of a reduced conjunctive Boolean network'' presented at the 56th IEEE Conference on Decision and Control (CDC), 2017.}% <-this % stops a space
\thanks{X. Chen is with Department of Electrical, Computer, and Energy Engineering, University of Colorado Boulder, Boulder, CO 80309, email: xudong.chen@colorado.edu.}% <-this % stops a space
\thanks{Z. Gao is with Booth School of Business, The University of Chicago, Chicago, IL 60637, email: zuguang.gao@chicagobooth.edu}
\thanks{T. Ba\c{s}ar is with Coordinated Science Laboratory, University of Illinois at Urbana-Champaign, Urbana, IL 61801, email: basar1@illinois.edu }
}

\maketitle

% As a general rule, do not put math, special symbols or citations
% in the abstract or keywords.
\begin{abstract}
  A conjunctive Boolean network (CBN) is a finite state dynamical system, whose variables take values from a binary set, and the value update rule for each variable is a Boolean function consisting only of  logic AND operations.  
We investigate the asymptotic behavior of CBNs by computing their periodic orbits.  
When the underlying digraph is strongly connected, the periodic orbits of the associated CBN has been completely understood, one-to-one corresponding to binary necklaces of a certain length given by the loop number of the graph.    
We characterize in the paper the periodic orbits of CBNs over an arbitrary {\em weakly connected} digraphs. We establish, among other things, a new method to investigate their asymptotic behavior. Specifically, we introduce a graphical-approach, termed {\em system reduction}, which turns the underlying digraph into a special weakly connected digraph whose strongly connected components are all cycles. We show that the reduced system uniquely determines the asymptotic behavior of the original system. Moreover, we provide a constructive method for computing the periodic orbit of the reduced system, which the system will enter for a given but arbitrary initial condition.     
\end{abstract}

% Note that keywords are not normally used for peerreview papers.
\begin{IEEEkeywords}
Boolean network, asymptotic behavior, system reduction, graph theory
\end{IEEEkeywords}

% For peer review papers, you can put extra information on the cover
% page as needed:
% \ifCLASSOPTIONpeerreview
% \begin{center} \bfseries EDICS Category: 3-BBND \end{center}
% \fi
%
% For peerreview papers, this IEEEtran command inserts a page break and
% creates the second title. It will be ignored for other modes.
\IEEEpeerreviewmaketitle

\section{Introduction}
% The very first letter is a 2 line initial drop letter followed
% by the rest of the first word in caps.
% 
% form to use if the first word consists of a single letter:
% \IEEEPARstart{A}{demo} file is ....
% 
% form to use if you need the single drop letter followed by
% normal text (unknown if ever used by the IEEE):
% \IEEEPARstart{A}{}demo file is ....
% 
% Some journals put the first two words in caps:
% \IEEEPARstart{T}{his demo} file is ....
% 
% Here we have the typical use of a "T" for an initial drop letter
% and "HIS" in caps to complete the first word.
%\IEEEPARstart{T}{his} demo file is intended to serve as a ``starter file''
%for IEEE journal papers produced under \LaTeX\ using
%IEEEtran.cls version 1.8b and later.
% You must have at least 2 lines in the paragraph with the drop letter
% (should never be an issue)
%I wish you the best of success.
%
%
%\hfill mds
% 
%\hfill August 26, 2015

A Boolean network is a %discrete-time 
finite state dynamical system whose variables are of Boolean type, labeled as ``$1$'' and ``$0$''. The value update rule for each variable is a Boolean function, depending only on a selected subset of the variables. Boolean networks have a wide range of applications in biochemistry, molecular biology, genetics, genomics and neuroscience, to name just a few, and can serve as efficient models for biological regulatory systems, such as neural network~\cite{hopfield1982neural,hopfield1984neurons,mcculloch1943logical} or gene regulatory networks~\cite{kauffman1969metabolic}.%,thomas1973boolean 
%Boolean networks have been widely used  in systems biology and (mathematical) computational biology. 
%~\cite{hopfield1982neural,hopfield1984neurons,mcculloch1943logical}
This line of research began with Boolean network representations of molecular networks \cite{kauffman1969homeostasis}, and was later generalized to the so-called logical models \cite{thomas1990biological}. Since then there have been studies of various classes of Boolean networks which are particularly suited to the logical expression of gene regulation \cite{raeymaekers2002dynamics}.

A special class of Boolean networks, of particular interest to us here, is the so-called \emph{conjunctive Boolean networks} (CBNs).   %nested canalyzing functions. 
Roughly speaking, a CBN is such that the Boolean functions for the variables are comprised only of the logic ``AND'' operations (a precise definition is given in Subsection~\ref{ssec:CBN}). 
Even though the update rule of a CBN is relatively simple, it has several relevant properties, which are critical in modeling complex network systems. For example, a CBN is {\em monotonic}, i.e., the output value of a Boolean function for each variable is non-decreasing if the number of $1$'s in the inputs increases. Evidence has been provided in~\cite{sontag2008effect} that biochemical networks are ``close to monotone''. We also note that each Boolean function in a CBN is a {\em canalyzing function}~\cite{jarrah2007nested}, meaning that if an input of the function holds a certain value, called the {\em canalyzing value}, then the output value of the function is uniquely determined regardless of the other values of the inputs. 
For example, the canalyzing value is~$0$ for a Boolean function in a CBN.     
Boolean networks with canalyzing functions are used to model genetic networks~\cite{harris2002model,kauffman2003random}.
%,kauffman2004genetic
%We further note that for a general Boolean network 
%Because of the simplicity of the update rules, the asymptotic behaviors (e.g.,fixed points, ) of  
%are of particular interest because of several reasons. First, they are monotonic, meaning that the output value of each of their Boolean functions for each variable is non-decreasing if the number of ``$1$"s in the inputs increases. We note that evidence has been provided in~\cite{sontag2008effect} that biochemical networks are ``close to monotone", so the monotonic property makes CBNs more appealing in its bio-related applications. Second, Boolean functions with only AND operations fall into a category called ``canalyzing functions", meaning that  This concept was introduced in~\cite{stuart1993origins}, and often used to model genetic networks \cite{harris2002model ,kauffman2003random
%,kauffman2004genetic
%}. Moreover, it turned out that the majority of Boolean functions that appear in the literature on Boolean networks are canalyzing functions. 
 We further note that the class of CBNs is ``universal'' in the sense that every general Boolean network can be associated with a CBN whose dynamic behavior uniquely determines the dynamic behavior of the original Boolean network. We make it precise in Sec.~\ref{ssec:universal}.
For the above reasons, CBNs have drawn special attention most recently. The stability structure of the periodic orbits in a strongly connected CBN has been investigated in~\cite{stabilityfull}. Controllability and Observability of CBNs have been addressed in~\cite{gaocontrollability,orbitcontrol,statecontrol,weiss2017minimal,weiss2017polinomial}.

We characterize in the paper the asymptotic behavior of a CBN over a weakly connected digraph.  
Since a CBN is a finite dynamical system, for any initial condition the trajectory generated by the system enters a periodic orbit (also known as a limit cycle) in finite time steps~\cite{colon2005boolean}. The problem of counting and characterizing periodic orbits of Boolean networks has been studied from different aspects via different approaches. We first refer the reader to~\cite{cheng2010linear,cheng2010analysis} for algebraic methods of using semi-tensor products by which one converts a Boolean function into an equivalent algebraic form. We next refer the reader to~\cite{zhao2013aggregation} for methods of computing periodic orbits using the corresponding state-transition graph. Note that if a Boolean network has $n$ variables, then its state-transition graph will have as many as $2^n$ vertices. To mitigate the computational complexity of finding all cycles in the state-transition graph, the authors of~\cite{zhao2013aggregation} proposed an approach of first decomposing a Boolean network into several subnetworks, next finding all input-state limit cycles of those subnetworks (using their state-transition graphs), and then composing the input-state limit cycles by checking certain compatibility conditions.

In this paper, we take a completely different graphical approach to tackle the problem. Instead of working on the state-transition graph  that corresponds to a CBN (or any subnetwork of the CBN), we appeal to the idea of system reduction and introduce a new but significantly simplified CBN, termed {\em reduced system}, which exhibits essentially the same asymptotic behavior as that of the original CBN.   
In particular, the simplification will be made such that the underlying digraph of the reduced system is a weakly connected digraph whose strongly connected components are all cycles of  relatively small sizes. The key fact that relates the dynamic behaviors of the reduced system and the original system  is given in  Theorem~\ref{pro:dynamics}, Subsection~\ref{ssec:thm1}. 
Specifically, we will show that the asymptotic behavior of the original CBN can be uniquely determined by the corresponding  reduced system. One is thus able to characterize the periodic orbits of the original system by working on the reduced system. 
Furthermore, after introducing the approach of system reduction and establishing Theorem~\ref{pro:dynamics}, 
we address the following question: Given an arbitrary initial state of the original CBN, which periodic orbit will the system eventually enter? We provide in Subsection~\ref{ssec:general} a complete solution to this question via the use of a reduced system. In particular, we will construct a map which assigns an arbitrary initial condition to one particular state in the periodic orbit which the system will enter. %with the given initial condition. 

This paper expands on our previous work~\cite{chen2017asymptotic} by providing several critical properties about the dynamics of (weakly connected) CBNs, a finer and more thorough analysis of their asymptotic behavior, and complete proofs of the lemmas and the main results that were left out of that conference paper. 
We also note that asymptotic behaviors of CBNs have been studied over strongly connected digraphs. 
For example, it has been shown in~\cite{jarrah2010dynamics,gao2016periodic} that if the underlying digraph is strongly connected, then a positive integer is the period of a certain periodic orbit if and only if it divides the lengths of all cycles in that corresponding digraph. We have further shown in~\cite{stabilityfull} that the set of periodic orbits can be identified with the set of binary necklaces, with the length being the greatest common divisor of all these cycles' lengths (a detailed overview of this fact will be given in Subsection~\ref{ssec:3sc}). However, such a one-to-one correspondence does not carry over to the case where the underlying graph is only weakly connected.

The remainder of the paper is organized as follows: 
In Section~\ref{pre}, we provide key definitions and notations for digraphs, binary necklace, and CBNs as well as the universality property of the class of CBNs. %We also review some preliminary results. 
%We then introduce the class of CBNs in precise terms. Some preliminary results on such networks are also given.  
In section~\ref{reduction}, we introduce the notion of a reduced system as well as the associated induced dynamics. Furthermore, we show that the induced dynamics defined on the reduced system uniquely determines the asymptotic behavior of the original system. Thus, the analysis can be simplified by investigating only the asymptotic behavior of a reduced system. This is done in Subsection~\ref{ssec:elem}. Specifically, we introduce there a simple class of digraphs, termed {\em elementary digraphs}, and characterize the asymptotic behaviors of the CBNs defined on these digraphs. We then show in Subsections~\ref{ssec:simpleunion} and~\ref{ssec:general} that one can use the elementary digraphs as building blocks to characterize  the asymptotic behavior of a general reduced system. 
%In Section~\ref{main}, we present the procedure of regularization as mentioned above as well as the main results of the paper. Due to space limitations, we do not include proofs of the technical results, which will be provided in a full version that will be available on arXiv.  
The paper ends with conclusions and outlooks in Section~\ref{end}. 

\section{Preliminaries}\label{pre}
We introduce here definitions and notations about digraphs, binary necklaces, and CBNs.  
\subsection{Digraphs and their strong component decompositions}
We introduce here some notations associated with a digraph. Let $G=(V,E)$ be a digraph, with $V$ the vertex set and $E$ the edge set. The cardinality of $V$, denoted by~$|V|$, is the number of vertices. 
We denote by $v_iv_j$ an edge from $v_i$ to $v_j$ in~$G$. We call $v_i$ an {\em in-neighbor} of  $v_j$ and $v_j$ an {\em out-neighbor} of $v_i$. 
We denote by $\mathcal{N}_{\rm in}(v_i)$ and
$\mathcal{N}_{{\rm out}}(v_i)$ the sets of in-neighbors and out-neighbors, respectively, of vertex $v_i$.  A {\em walk} in $G$ is a sequence of vertices $v_{i_0}v_{i_1}\cdots v_{i_m}$ in which each $v_{i_j}v_{i_{j+1}}$, for $j = 0,\ldots, m-1$, is an edge.  A walk is said to be a {\em path} if all the vertices in the walk are pairwise distinct. Further, a walk is said to be a {\em cycle}  if there is no repetition of vertices in the walk other than the repetition of the starting- and ending- vertex.

For a subset $V'\subseteq V$, we let $\mathcal{N}_{{\rm in}}(V'):= \cup_{v_i\in V'}\mathcal{N}_{{\rm in}}(v_i)$ and $\mathcal{N}_{{\rm out}}(V'):= \cup_{v_i\in V'}\mathcal{N}_{{\rm out}}(v_i)$. Further, for any positive integer $k$, we define ${\mathcal N}_{\rm in}^k(V')$ and ${\mathcal N}_{\rm out}^k(V')$ recursively, %For $k = 1$, we let ${\mathcal N}_{\rm in}^k(V') := {\mathcal N}_{\rm in}(V')$ and ${\mathcal N}_{\rm out}^k(V') := {\mathcal N}_{\rm out}(V')$. 
i.e., for $k > 1$, we let 
$$
\left\{
\begin{array}{l}
{\mathcal N}_{\rm in}^k(V') := \cup_{v_i \in {\mathcal N}_{\rm in}^{k-1}(V')}{\mathcal N}_{\rm in}(v_i), \\
{\mathcal N}^k_{\rm out}(V') := \cup_{v_i \in {\mathcal N}_{\rm out}^{k-1}(V')}{\mathcal N}_{\rm out}(v_i).
\end{array}
\right.
$$
Let $G'$ be a subgraph of $G$ and $v_i$ be a vertex of $G'$. We denote by ${\mathcal N}_{\rm in}(v_i; G')$ (resp. ${\mathcal N}_{\rm out}(v_i, G')$) the in-neighbors (resp. out-neighbors) of $v_i$, but within the graph $G'$. Similarly, we define ${\mathcal N}^k_{\rm in}(v_i; G')$ and ${\mathcal N}^k_{\rm out}(v_i; G')$.
%The  \emph{in-degree} and \emph{out-degree}
%of vertex $v_i$ are defined as $|\mathcal{N}_{{\rm in}}(v_i)|$ and $|\mathcal{N}_{{\rm out}}(v_i)|$, respectively.  %where
%$|\cdot|$ denotes the cardinality of a set.

%Let $v_i$ and $v_j$ be two vertices of~$G$. A {\em walk} from $v_i$ to $v_j$, denoted by $w_{ij}$, is a sequence $v_{i_0}v_{i_1}\cdots v_{i_m}$ (with $v_{i_0} = v_i$ and $v_{i_m} = v_j$) in which $v_{i_k}v_{i_{k+1}}$ is an edge of {\color{blue}$G$} for all 
%$k\in\{0,1,\ldots,m-1\}$. A walk is also a {\it path} if all the vertices in the walk are pairwise distinct. A {\em closed walk} is a walk $w_{ij}$ such that the starting and ending vertices are the same, i.e., $v_i=v_j$.
%{\color{blue}A closed walk is called a {\em cycle} if there is no repetition of vertices in the walk other than the starting- and ending-vertex.} The {\it length} of a walk/path/cycle is defined to be the number of edges in that walk/path/cycle. If  a walk~$w$ is comprised only of a single vertex, i.e., $w = v_i$, then $w$ is by default a cycle of length~$0$. Note that for any vertex~$v_i$, the set $\mathcal{N}_{\rm in}^k(v_i)$ (resp. $\mathcal{N}_{\rm out}^k(v_i)$) is the collection of vertices $v_j$ such that there is a walk of length $k$ from $v_j$ (resp. $v_i$) to $v_i$ (resp. $v_j$).  

A digraph is said to be {\em weakly connected} if the undirected graph, obtained by ignoring the orientations of the edges, is connected. 
A digraph is said to be \emph{strongly connected} if for any two distinct vertices~$v_i$ and~$v_j$ in the graph, there is a path from~$v_i$ to~$v_j$. A digraph comprised of only 
a single vertex, with/without the self-arc, is by default strongly connected. 
Let $G = (V, E)$ be a digraph, and $G_i = (V_i, E_i)$ and $G_j = (V_j, E_j)$ be two subgraphs of $G$. Then, $G_i$ and $G_j$ are said to be disjoint if $V_i \cap V_j = \varnothing$. Further, we say that a subgraph $G_i$ is induced by $V_i$ if {\color{black}$E_i = \{v_kv_l \mid v_kv_l\in E \text{ and }v_k, v_l \in V_i\}$. }  We  now introduce the following definition: 

\begin{definition}\label{def:SCD}
	Let $G = (V, E)$ be a weakly connected digraph. The subgraphs $G_i = (V_i, E_i)$, $1\le i\le q$, form the {\bf (coarsest) strong component decomposition} of $G$ if the following hold:
	\begin{enumerate}
		\item The subgraphs $G_i$'s are pairwise disjoint, and moreover, $V = \sqcup^q_{i = 1} V_i$. 
		\item Each $G_i$ is strongly connected, and moreover, there does not exist a strongly connected subgraph $G'_i = (V'_i, E'_i)$ such that $V_i \subsetneq V'_i$. 
	\end{enumerate}
\end{definition}

Note that the strong component decomposition (SCD) exists and is unique (see, for example~\cite{chen2017controllability}). We also note that the SCD induces a partial order on the collection of the subgraphs $G_i$'s. Specifically, for two distinct subgraphs $G_i$ and $G_j$, we say that $G_j$ is a {\em successor} of $G_i$ (and correspondingly, $G_i$ is a {\em predecessor} of $G_j$) if there exists a path from a vertex $v_i\in V_i$ to a vertex $v_j\in V_j$. For simplicity, we denote this relationship by $G_i \succ G_j$. 
Further, we say that $G_j$ is an {\em immediate successor} of $G_i$ if $G_i \succ G_j$, and moreover, there does not exist another $G_k$ such that $G_i \succ G_k \succ G_j$. A subgraph $G_i$ is a {\em maximal} element if there is no predecessor of $G_i$. 

Given a weakly connected graph $G$, we denote by $\cS(G)$ the collection of the subgraphs $G_i$'s of $G$ obtained by the SCD. Let $\cS^0(G)$ be a subset of $\cS(G)$ collecting the subgraphs that are maximal with respect to the partial order defined above. We then let $\cS^1(G)$ be the immediate successor of $\cS^0(G)$ defined as follows: Each $G_j \in \cS^1(G)$ is an immediate successor of some $G_i \in \cS^0(G)$; moreover, for any other $G_{i'}\in \cS^0(G)$,  either $G_{i'}$ and $G_j$ are not comparable or $G_j$ is an immediate successor of $G_{i'}$.  Similarly, we define $\cS^2(G)$ to be the immediate successor of $\cS^1(G)$. Since there are only finitely many subgraphs, there must exist an integer~$L$ such that $\cS^{L+1}(G) = \varnothing$. We may as well choose~$L$ to be the smallest integer for the relation above to hold. Then, each $\cS^l(G)$, $0\le l\le L $,  is nonempty, and moreover, we have that $\cS(G) = \sqcup^L_{l = 0} \cS^l(G)$, i.e., the subsets $\cS^l(G)$'s form a partition of~$\cS(G)$.

%In this paper, when we use the term {\em weakly connected digraphs}, we only refer to those which, after performing SCD, each strongly connected component has at lease one vertex, and when a strongly connected component has only one vertex, there is a self-arc on it. %This assumption makes the analysis tractable.

\subsection{Binary necklace}\label{binarynecklace}
A {\bf binary necklace} of length $p$ is an equivalence class of $p$-character strings over the binary set $\mathbb{F}_2=\{0,1\}$, taking all rotations as equivalent. For example, in the case $p = 4$, there are six different binary necklaces, as illustrated in Fig.~\ref{necklace}. 
The {\bf order} of a necklace is the cardinality of the corresponding equivalence class, and it is always a divisor of $p$.  We refer the reader to~\cite{moreau1872permutations} for the number of binary necklaces of a given length, and further, to~\cite{gilbert1961symmetry,ruskey1999efficient} for the number of binary necklaces with a given number of~$1$'s.

\begin{figure}[ht]
	\centering
	\includegraphics[height=40mm]{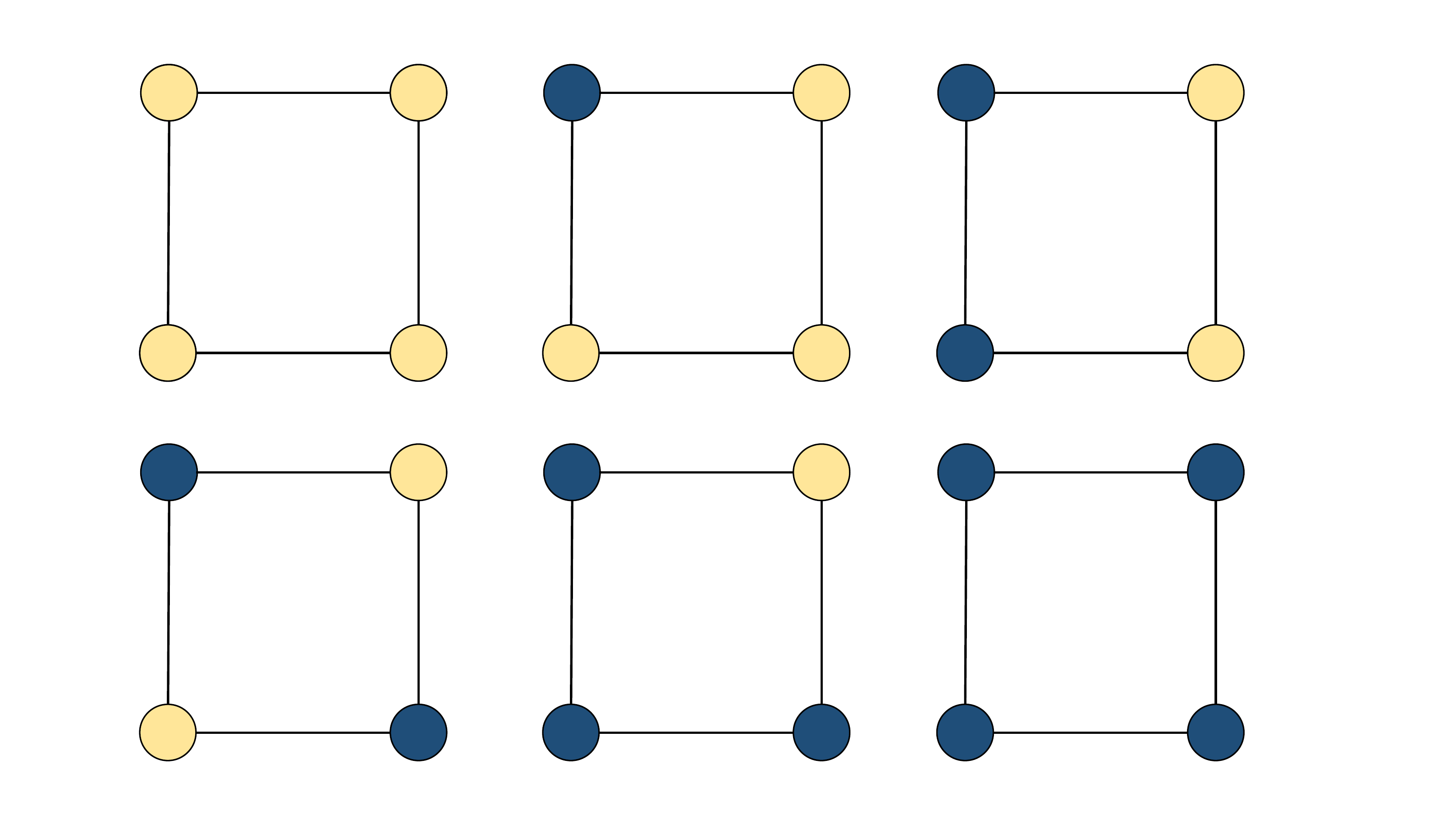}
	\caption{We illustrate here all binary necklaces of length~$4$. In the figure, if the bead is plotted in dark blue (resp. light yellow), then it holds value ``1'' (resp, ``0''). }
	\label{necklace}
\end{figure}

\subsection{Conjunctive Boolean network (CBN)}\label{ssec:CBN}

Let $\mathbb{F}_2=\{0,1\}$ be the finite field.
A function $f$ on $n$ variables is a {\em Boolean function} if it is of the form $f:\F^n_2 \to \F_2$. 
%A \emph{Boolean network} on $n$ Boolean variables $x_1(t),\ldots, x_n(t)$ is a discrete-time finite state dynamical system. 
The {\em value update rule} associated with a Boolean network on $n$ variables $x_1(t),\ldots, x_n(t)$ can be described by a set of Boolean functions $f_1,\ldots, f_n$: 
$$
x_i(t+1) = f_i(x_1(t),\ldots, x_n(t)), \quad \forall i = 1, \ldots, n.  
$$
For convenience, we let $x(t):= (x_1(t),\ldots, x_n(t)) \in \F_2^n$ be the state of the Boolean network at time~$t$, and let $$f:= (f_1, \ldots, f_n): x(t) \mapsto x(t+1).$$

%The \emph{state space} of $f$, denoted by $\mathcal{S}(f)$, is a directed graph with vertex set $\mathbb{F}^n_2$ and a directed edge from $x$ to $x'$ if $f(x)=x'$. For any $x\in \mathbb{F}^n_2$, if $p$ is the smallest number such that $x=f^p(x)$, or equivalently, $x(t)=x(t+p)$, then, the sequence $\{x,f(x),\ldots,f^{p-1}(x)\}$, taking all rotations as equivalent, is called a \emph{limit cycle} of length $p$, and $x(t)$ is a \emph{periodic point} of \emph{period} $p$, and system is in a \emph{periodic state}. If $p=1$, the \emph{periodic point} is also called a \emph{fixed point}. Clearly there are $p$ periodic points in a limit cycle.

%We consider, in this paper, a special class of Boolean networks, termed {\em CBNs}. Roughly speaking, a Boolean network is conjunctive if each Boolean function $f_i$ is an AND operation on a selected subset of the $n$ variables. A precise definition is provided in the following: 

\begin{definition}[Conjunctive Boolean network~\cite{jarrah2010dynamics}]\label{CBN}
	A Boolean network $f = (f_1, \ldots, f_n)$  is {\bf conjunctive} if each Boolean function $f_i$ can be expressed as follows:
	\begin{equation}\label{eq:updaterule}
	f_i(x_1,\ldots, x_n)=\prod^n_{j = 1} x_j^{\epsilon_{ji}}
	\end{equation}
	with $\epsilon_{ji}\in \{0,1\}$ for all $j=1,\ldots,n$. The associated {\bf dependency graph} is a digraph $G = (V,E)$ of $n$ vertices. An edge $v_iv_j$ exists in $G$ if and only if $\epsilon_{ij} = 1$.  
\end{definition}

\begin{remark}\label{rmk:cbntodbn}
A Boolean network is called {\em disjunctive} if each of its Boolean functions is comprised of only ``OR'' operations. 
	There is an isomorphism between
	the class of CBNs and
	the class of disjunctive Boolean networks (DBNs): Let $f$ (resp. $g$) be Boolean functions on $n$ Boolean variables
	$x_1,\ldots, x_n$, comprised of only ``AND'' (resp. ``OR'') operations.
	Then, $f(x_1, \ldots , x_n) = \bar g(\bar x_1, \ldots, \bar x_n)$, where ``$\bar{\,\,\,\,}$'' is the negation operator.  
\end{remark}

Note that a CBN uniquely determines its dependency graph and vice versa. We can thus refer a CBN to its dependency graph, i.e., we use the phrase ``a CBN $G$''.  Also, note that the dependency graph is {\em not} the state-transition graph in which vertices represent states of a CBN (hence, there are $2^n$ vertices) and directed edges indicate one-step state transitions. 
With the dependency graph, we can re-write~\eqref{eq:updaterule} as follows:
$$
x_i(t) = f_i(x(t-1)) = \prod_{v_j \in {\mathcal N}_{\rm in}(v_i)}x_j(t-1). 
$$
By recursively applying the above expression, we obtain
$
x_i(t) = \prod_{v_j \in {\mathcal N}_{\rm in}^t(v_i)}x_j(0)  
$, 
which expresses the current state of vertex $v_i$ in terms of the initial conditions of other vertices. In particular, if $x(0) \ge x'(0)$ (the inequality is entry-wise), then $x_i(t) \ge x'_i(t)$.

For the remainder of the paper, we let the dependency graph~$G$ be weakly connected. Note that if $G$ is not connected, then the results established in the paper can be applied to the connected components of $G$.  
We also note that if a vertex $v_i$ of $G$ has no incoming neighbor, then from Definition~\ref{CBN}, $\epsilon_{ji} = 0$ for all~$j = 1,\ldots, n$, and hence, from~\eqref{eq:updaterule}, 
$f_i(x_1, \ldots, x_n) = 1$. In other words, $x_i(t)\equiv 1$ for all~$t$, which implies that the Boolean variable~$x_i$ does {\em not} affect the rest at all. 
We can thus trim the size of the CBN by simply ignoring~$x_i$. More precisely, we trim the dependency graph by deleting any vertex $v_i$ that does not have an incoming neighbor, together with the edges that are incident to~$v_i$. For the above reason, we assume in the sequel that each vertex of the dependency graph $G$ has at least one incoming neighbor.

Since a CBN is a finite dynamical system, for any initial condition $x(0)\in \F_2^n$, the trajectory $x(0), x(1), \ldots$ will enter a {\em periodic orbit} in a finite number of time steps. Specifically, there exist a time $t_0 \ge 0$ and an integer number $p \ge 1$ such that $x(t_0 + p) = x(t_0)$. Further, if $x(t_0 +p') \neq x(t_0)$ for any $p' = 1,\ldots, p-1$, then the sequence $\{x(t_0), \ldots, x(t_0 + p-1)\}$, taking rotations as equivalent, is said to be a {\bf periodic orbit}, and we call $p$ its {\bf period}.  Note that a periodic orbit is also known as a {\em limit cycle}. But, to avoid confusion, we will not use ``limit cycle'' in the remainder of the paper. We will only use ``cycle'' to refer to a sequence of vertices in a dependency graph associated with a CBN.

\subsection{Universality of CBN}\label{ssec:universal}
We note here that the class of CBNs has the ``universality'' property. Specifically, to every Boolean network, there corresponds a CBN whose dynamic behavior uniquely determines the dynamic behavior of the original system. We elaborate on this fact below. 

Let $f = (f_1,\cdots, f_n)$ be the update rule of an arbitrary Boolean network on $n$ variables. By universality of logic gates~\cite{sheffer1913set}, every Boolean function $f_i$ can be expressed as a certain composition of ``NAND'' (or ``NOR'') operations. Note that $X \operatorname{nand} Y = \bar X \operatorname{or} \bar Y$. If, further, $X = X_1 \operatorname{nand}  X_2$, then $X \operatorname{nand} Y = X_1X_2 \operatorname{or} \bar Y$. Repeatedly applying the above arguments, we obtain that every Boolean function $f_i$ can be expressed as 
\begin{equation}\label{eq:generators}
  f_i(x_1,\cdots, x_n) = \operatorname{OR}^{m_i}_{j = 1} z_{i_j},   
\end{equation}
where each $z_{i_j}$ is a certain monomial in variables $x_1,\cdots, x_n$ and $\bar x_1, \ldots, \bar x_n$. Note that each $x_i$  takes only the values $1$ or $0$. Thus, $x^2_i = x_i$ and $\bar x^2_i = \bar x_i$. This, in particular, implies that there are only finitely many different monomials. For example, one can reduce a monomial $x^2_i\bar x^3_j$ to $x_i\bar x_j$. Also, note that $x_i\bar x_i = 0$ for any $i= 1,\ldots, n$. We call a monomial $z$ trivial if it contains any such product.  We then let $Z$ be the collection of all reduced,  nontrivial monomials.

Recall that a Boolean network is disjunctive if each of its Boolean functions is comprised of only ``OR'' operations. Moreover, disjunctive Boolean networks (DBNs) one-to-one correspond to CBNs.  
The update rule~\eqref{eq:generators} induces a DBN over the collection $Z$ of monomials (i.e., the state variables are monomials) as follows: For a monomial $z = \prod^p_{s = 1}x_{i_s}\prod^q_{t = 1} \bar x_{i_t}$, we define its update rule as follows:
\begin{equation}\label{eq:defgz}
g_z:= \prod^p_{s = 1}f_{i_s} \prod^q_{t = 1}\bar f_{j_t}.
\end{equation}
Note that the right hand side of~\eqref{eq:defgz} is disjunctive in monomials of $Z$. To see this, first note that $\bar f_i = \prod^{m_i}_{j = 1} \bar z_{i_j}$. Also, note that $(X_1 \operatorname{or} X_2)Y = X_1Y \operatorname{or} X_2Y$. 

We have thus constructed a DBN from the original Boolean network. Moreover, by its construction, the dynamic behavior of the DBN uniquely determines the dynamic behavior of the original system. Then, by the isomorphism between the two classes of CBNs and DBNs  (Remark~\ref{rmk:cbntodbn}), 
one is able to translate a DBN into a CBN. 
We further note that the above approach of introducing higher order moments of state variables is essentially similar to the the approch of Carleman linearizaion~\cite{carleman1932application} by which one approximates a nonlinear differential equation by a linear system with an enlarged number of states.

\section{Reduced systems and induced dynamics}\label{reduction}
We introduce in the section the notion of a {\em reduced system} associated with a CBN. A reduced system is a significantly simplified system, which exhibits almost the same asymptotic behavior as the original CBN. The section is divided into three parts: In Subsection~\ref{ssec:3sc}, we review a few facts about the asymptotic behavior of a strongly connected CBN. Then, in Subsection~\ref{ssec:thm1}, we define precisely a reduced system, and state the first main result of the paper (Theorem~\ref{pro:dynamics}), which relates the asymptotic behavior of the reduced system to the asymptotic behavior of the original system. Subsection~\ref{ssec:pfthm1} is then devoted to the proof of Theorem~\ref{pro:dynamics}.          

\subsection{Review of strongly connected CBNs}\label{ssec:3sc}
Let $G$ be strongly connected, and $\{C_1, \ldots, C_m\}$ be the collection of cycles in $G$, and $n_i$ be the length of the cycle $C_i$ for $i = 1,\ldots, m$. Let $p$ be the greatest common divisor (gcd) of the~$n_i$'s:
$$
p := {\rm gcd}\{n_1,\ldots, n_m\}.
$$ 
%In this paper, we only consider the 
%In the case $G$ is comprised only of a single vertex with a self-arc, $p = 1$. 
In the case $G$ is a single vertex without a self-arc, we set $p = 0$. The integer~$p$ is also known as the {\em loop number} of the graph~$G$.  
We have the following fact:

\begin{lemma}\label{lem:periodofsg}
	%We consider a strongly connected CBN~$G$ whose loop number $p$ is positive. 
	Let $G$ be strongly connected, and the loop number $p$ be positive. 
	Then, $p'$ is the period of a periodic orbit of the CBN $G$ if and only if $p'$ divides~$p$.     
\end{lemma}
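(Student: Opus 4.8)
The plan is to prove the two implications separately, relying on the monotone ``min''-type evolution of a CBN for necessity and on the cyclic structure of a strongly connected digraph for sufficiency. Throughout I use the basic estimate that if there is a walk of length $\ell$ from $v_a$ to $v_b$, then chaining the coordinatewise inequalities $x_{u_{s+1}}(t+1)\le x_{u_s}(t)$ along that walk yields $x_b(t+\ell)\le x_a(t)$ for all $t$; this follows from $x_i(t+1)=\prod_{v_j\in\Nin(v_i)}x_j(t)$ together with the monotonicity noted after~\eqref{eq:updaterule}.

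For necessity, suppose $x(\cdot)$ lies on a periodic orbit of minimal period $p'$, so that $x(t)=x(t+p')$ for all $t\in\mathbb{Z}$ (extending the trajectory backward is legitimate, since on a periodic orbit $f$ acts as a cyclic permutation of the $p'$ states). Fix a vertex $v_i$ and a cycle $C$ of length $n$ in $G$. By strong connectivity there are fixed integers $a,b\ge 0$ so that, for every $k\ge 0$, there is a closed walk through $v_i$ of length $a+kn+b$ that enters $C$, goes around it $k$ times, and returns. Applying the estimate around this closed walk gives $x_i(t+a+kn+b)\le x_i(t)$ for all $t$; since $x_i(\cdot)$ is $\F_2$-valued and $p'$-periodic, summing over one period forces $x_i(t+a+kn+b)=x_i(t)$, i.e. $a+kn+b$ is a period of the scalar sequence $x_i(\cdot)$ for every $k\ge 0$. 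Taking the difference of the cases $k$ and $k+1$ shows that the minimal period $q_i$ of $x_i(\cdot)$ divides $n$; as $C$ was arbitrary, $q_i\mid\gcd\{n_1,\dots,n_m\}=p$. Finally the minimal period of the full orbit is $\operatorname{lcm}_i q_i$, which equals $p'$, so $p'\mid p$.

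For sufficiency, assume $p'\mid p$. I will use the classical fact that a strongly connected digraph whose loop number is $p$ admits a partition $V=V_0\sqcup\cdots\sqcup V_{p-1}$ into nonempty classes such that every edge goes from some $V_k$ to $V_{(k+1)\bmod p}$ (the index of imprimitivity coincides with the gcd of the cycle lengths; see e.g.~\cite{jarrah2010dynamics,gao2016periodic}). Choose a binary string $w\in\F_2^{p'}$ whose minimal cyclic period is exactly $p'$ (for $p'\ge 2$ take $w=(1,0,\dots,0)$; for $p'=1$ any single bit works), and define, for $v_i\in V_k$, $x_i(t):=w_{(k-t)\bmod p'}$, which is well defined because $p'\mid p$. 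Every in-neighbor of a vertex in $V_k$ lies in $V_{(k-1)\bmod p}$, and each vertex has at least one in-neighbor, so $(f(x(t)))_i=w_{((k-1)-t)\bmod p'}=x_i(t+1)$; hence $x(\cdot)$ is a genuine trajectory, and it is periodic, so it lies on a periodic orbit. Its period is the smallest $q>0$ with $w_{(s-q)\bmod p'}=w_{s\bmod p'}$ for all $s$ — here using that all $V_k$ are nonempty so that $(k-t)\bmod p'$ ranges over all of $\mathbb{Z}/p'\mathbb{Z}$ — which is precisely the minimal cyclic period of $w$, namely $p'$.

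I expect the main obstacle to be the sufficiency direction: assembling the cyclic decomposition of $G$ and, above all, verifying that the constructed orbit has period \emph{exactly} $p'$ rather than merely a divisor of it. The necessity direction is largely bookkeeping once the monotone estimate along closed walks is in place. A minor point needing a separate remark is the degenerate case in which $G$ is a single vertex carrying a self-loop, where $p=1$ and the only orbits are the two fixed points, consistent with the statement.
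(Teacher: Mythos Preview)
The paper does not actually prove Lemma~\ref{lem:periodofsg}; it simply refers the reader to~\cite{jarrah2010dynamics,gao2016periodic}. Your argument is correct and self-contained. The sufficiency direction uses precisely the cyclic partition $V=V_0\sqcup\cdots\sqcup V_{p-1}$ that the paper records as Lemma~\ref{lem:partition}, so it meshes seamlessly with the surrounding text; your verification that the constructed trajectory has minimal period exactly $p'$ (via nonemptiness of each $V_k$ and the choice of a primitive word $w$) is the point that needs care, and you handle it. The necessity direction---propagating the inequality $x_b(t+\ell)\le x_a(t)$ along closed walks of length $a+kn+b$, then using periodicity of $x_i(\cdot)$ to upgrade the inequality to equality, and finally differencing in $k$ to extract $q_i\mid n$---is a clean elementary argument that avoids any appeal to the structure theory beyond strong connectivity. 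Both directions are sound; nothing is missing.
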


We refer to~\cite{jarrah2010dynamics,gao2016periodic} for a proof of Lemma~\ref{lem:periodofsg}.  We now fix the graph~$G$, and introduce an equivalence relation defined on the vertex set~$V$ of~$G$. A vertex $v_i$ is said to be related to $v_j$, or simply written as $v_i \sim_{p} v_j$, if there exists a path from $v_i$ to $v_j$ whose length is a multiple of~$p$. We have shown in~\cite{stabilityfull} that the relation ``$\sim_{p}$'' defined above is an equivalence relation. For a vertex $v$ of $G$, we denote by $[v]$ the equivalence class that contains~$v$. 
Now, let two vertices $v_i$ and $v_j$ be in the same equivalence class. Then, it is known that the length of any walk from~$v_i$ to~$v_j$ is a multiple of~$p$ (see, for example,~\cite{stabilityfull}). An immediate consequence is then the following: 
%We then state the following fact:

\begin{lemma}\label{lem:partition}
	Fix a vertex $v_0$ of $G$, and choose vertices $v_1,\ldots, v_{p-1}$ such that $v_i\in \mathcal{N}_{out}(v_{i-1})$ for all $i = 1,\ldots, p-1$. Then, the subsets $[v_0],\ldots, [v_{p-1}]$  form a partition of $V$. Moreover, for any $i = 0, \ldots, p-1$,  
	\begin{equation}\label{eq:inandout}
	\left\{
	\begin{array}{l}
	{\mathcal N}_{\rm out}([v_i]) = [v_{i+1 \bmod  p}],  \\
	{\mathcal N}_{\rm in}([v_i]) = [v_{i-1 \bmod  p}]. 
	\end{array}
	\right.
	\end{equation}\,
\end{lemma}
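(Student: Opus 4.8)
The plan is to introduce a single bookkeeping function that records, modulo $p$, the length of walks emanating from the fixed vertex $v_0$, and then to identify the classes $[v_0],\ldots,[v_{p-1}]$ with its level sets. Two preliminary observations make this work cleanly. First, since the trivial (length-$0$) path witnesses $v\sim_p v$ for every vertex $v$, the fact quoted just before the lemma — that any walk between two $\sim_p$-equivalent vertices has length divisible by $p$ — specializes to: \emph{every closed walk in $G$ has length a multiple of $p$}. Second, a walk from $a$ to $b$ can always be pruned to a path from $a$ to $b$ by excising closed sub-walks, each of which has length divisible by $p$; hence there is a path from $a$ to $b$ of length divisible by $p$ if and only if there is such a walk, and I will phrase everything below in terms of walk lengths mod $p$.

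Next I would define $\rho\colon V\to\{0,1,\ldots,p-1\}$ by letting $\rho(w)$ be the length, reduced modulo $p$, of an arbitrary walk from $v_0$ to $w$ (such a walk exists by strong connectivity). This is well defined: if $W_1,W_2$ are walks from $v_0$ to $w$ and $W'$ is any walk from $w$ back to $v_0$, then $W_1W'$ and $W_2W'$ are closed walks, so their lengths are divisible by $p$, forcing $\mathrm{len}(W_1)\equiv\mathrm{len}(W_2)\pmod p$. By construction $\rho(v_i)=i$ for $i=0,\ldots,p-1$, since $v_0v_1\cdots v_i$ is a walk of length $i<p$. I would then show $[v_i]=\rho^{-1}(i)$: if $w\sim_p v_i$, concatenating $v_0v_1\cdots v_i$ with a walk from $v_i$ to $w$ of length divisible by $p$ gives $\rho(w)=i$; conversely, if $\rho(w)=i$, pick a walk $W$ from $v_0$ to $w$ and a walk $W'$ from $w$ to $v_0$, so $\mathrm{len}(W)\equiv i$ and $\mathrm{len}(W)+\mathrm{len}(W')\equiv 0$, hence $\mathrm{len}(W')\equiv -i$, and then $W'$ followed by $v_0v_1\cdots v_i$ is a walk from $w$ to $v_i$ of length divisible by $p$, i.e. $w\sim_p v_i$. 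Since $\rho$ is a well-defined, surjective function on $V$, its fibers $[v_0],\ldots,[v_{p-1}]$ are nonempty, pairwise disjoint, and cover $V$, which is the partition claim.

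For the displayed identities \eqref{eq:inandout}, the key fact is that $\rho$ increases by exactly $1$ modulo $p$ along every edge: if $uw\in E$, appending this edge to a walk from $v_0$ to $u$ shows $\rho(w)\equiv\rho(u)+1\pmod p$. This immediately gives the inclusions $\mathcal{N}_{\rm out}([v_i])\subseteq[v_{i+1\bmod p}]$ and $\mathcal{N}_{\rm in}([v_i])\subseteq[v_{i-1\bmod p}]$. For the reverse inclusions I would use that in a strongly connected digraph with at least one cycle every vertex has both an in-neighbor and an out-neighbor (for $|V|\ge2$ this follows from strong connectivity; for $|V|=1$ the positivity of $p$ forces a self-arc): given $w\in[v_{i+1\bmod p}]$, choose an in-neighbor $u$ of $w$; then $\rho(u)+1\equiv\rho(w)\equiv i+1$, so $\rho(u)=i$, i.e. $u\in[v_i]$ and $w\in\mathcal{N}_{\rm out}([v_i])$; the claim for $\mathcal{N}_{\rm in}$ is symmetric, using an out-neighbor of $w$.

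I do not expect a genuinely hard step here; the only subtlety worth being careful about is the mismatch between the definition of $\sim_p$ via \emph{paths} and the fact (stated for \emph{walks}) quoted before the lemma, which is precisely why I would settle the walk-versus-path equivalence at the outset and then argue entirely with walk lengths modulo $p$. The degenerate cases ($p=1$, or $G$ a single vertex with a self-arc) are then handled uniformly, since $p\ge 1$ guarantees a cycle and hence an in-neighbor and an out-neighbor for every vertex.
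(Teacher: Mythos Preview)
Your proposal is correct. The paper does not actually supply a proof of this lemma---it simply states it as ``an immediate consequence'' of the cited fact that any walk between $\sim_p$-equivalent vertices has length divisible by~$p$---and your argument via the residue function $\rho$ is exactly the standard way of unpacking that consequence, so the approaches agree.
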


%We refer to Fig.~\ref{partition} as an illustration of Lemma~\ref{lem:partition}.
%
%\begin{figure}[h]
%	\centering
%	\includegraphics[height=60mm]{partition1.pdf}
%	\caption{The digraph in the figure has three cycles whose lengths are $4$, $8$, and $12$, respectively. Let $p = 4$ be a common divisor of the cycle lengths. Then, the associated partition yields $4$ disjoint subsets, with the vertices of the same color belonging to the same subset.  }
%	\label{partition}
%\end{figure}

%For the remainder of the subsection, we will fix the choices of the vertices $v_i$'s, $i = 0,\ldots, p-1$. 
For a subset $V'\subseteq V$, we let $x_{V'}(t)\in \F^{|V'|}_2$ be the restriction of the state~$x(t)$ to~$V'$. For example, if $V' = \{v_i,v_j\}$, then $x_{V'}(t) = (x_i(t),x_j(t))$.  
We now state in the following lemma a few properties  about periodic orbits of a strongly connected CBN.

\begin{lemma}\label{lem:periodicorbit}
	Let $G$ be strongly connected and the loop number $p$ be positive. Let $x(0)$ be an initial condition of a CBN~$G$. Then, there exists a time step $N$, divisible by $p$,  such that the following hold:
	\begin{enumerate}
		\item For any $i = 0,\ldots, p-1$, $$x_{[v_i]}(N) = \prod_{v_{k} \in [v_i]} x_{k}(0){\bf 1},$$
		where ${\bf 1}$ is a vector of $1$'s of an appropriate dimension. 
		\item For any $t \ge  N$ and any $i = 0, \ldots, p-1$, there is a $y_i(t)\in \F_2$ such that  $x_{[v_i]}(t) = y_i(t) \bf {1}$. Moreover,
		$$
		y_{i}(t + 1) = y_{(i-1) \bmod p}(t).
		$$
	\end{enumerate}\,
\end{lemma}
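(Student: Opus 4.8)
\textbf{Proof proposal for Lemma~\ref{lem:periodicorbit}.}

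The plan is to exploit the closed-form expression $x_i(t) = \prod_{v_j \in {\mathcal N}_{\rm in}^t(v_i)} x_j(0)$ together with the structure provided by Lemma~\ref{lem:partition}. First I would analyze how the sets ${\mathcal N}_{\rm in}^t(v_i)$ evolve. Since $G$ is strongly connected with loop number $p$, any walk from $v_i$ to $v_j$ has length congruent to a fixed residue mod $p$ determined only by the classes $[v_i]$ and $[v_j]$; combined with~\eqref{eq:inandout}, iterating ${\mathcal N}_{\rm in}$ exactly $p$ steps maps $[v_i]$ into itself, and more generally ${\mathcal N}^t_{\rm in}(v_i) \subseteq [v_{(i-t) \bmod p}]$ whenever $v_i \in [v_i]$. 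The key claim to establish is that for $t$ a sufficiently large multiple of $p$, we actually have ${\mathcal N}^t_{\rm in}(v_i) = [v_{(i-t)\bmod p}] = [v_i]$ (the last equality when $p \mid t$): the inclusion $\subseteq$ is immediate, and the reverse inclusion follows because strong connectivity forces every vertex in the target class to be reachable from $v_i$ by a backward walk of length exactly $t$ once $t$ is large enough — here one uses that walk lengths between two vertices in the same class form a numerical semigroup containing $p\mathbb{Z}_{\ge k_0}$ for some $k_0$. I would take $N$ to be a common multiple of $p$ large enough for this to hold simultaneously for all $v_i$ (finitely many vertices, so such $N$ exists).

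Granting that claim, part (1) is immediate: for $v_i$ in the class with representative $v_i$ (abusing notation), $x_i(N) = \prod_{v_k \in {\mathcal N}^N_{\rm in}(v_i)} x_k(0) = \prod_{v_k \in [v_i]} x_k(0)$, which is the same value for every vertex of $[v_i]$, hence $x_{[v_i]}(N)$ is a constant vector $y_i(N)\mathbf{1}$ with $y_i(N) = \prod_{v_k\in[v_i]} x_k(0)$.

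For part (2), I would argue by induction on $t \ge N$ using the one-step update rule directly rather than the $t$-fold expansion. Suppose $x_{[v_j]}(t) = y_j(t)\mathbf{1}$ for all $j$. Fix $v_i \in [v_i]$; then $x_i(t+1) = \prod_{v_k \in {\mathcal N}_{\rm in}(v_i)} x_k(t)$. By~\eqref{eq:inandout}, ${\mathcal N}_{\rm in}(v_i) \subseteq [v_{(i-1)\bmod p}]$, on which $x(t)$ equals $y_{(i-1)\bmod p}(t)$ on every coordinate; since $v_i$ has at least one in-neighbor (our standing assumption after trimming), the product is nonempty and equals $y_{(i-1)\bmod p}(t)$. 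This holds for every $v_i \in [v_i]$, so $x_{[v_i]}(t+1) = y_{(i-1)\bmod p}(t)\mathbf{1}$, which both shows the vector is constant on the class (defining $y_i(t+1)$) and gives the recursion $y_i(t+1) = y_{(i-1)\bmod p}(t)$. The base case $t = N$ is part (1).

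The main obstacle I anticipate is the claim that ${\mathcal N}^N_{\rm in}(v_i)$ equals the whole class $[v_i]$ for $N$ a large multiple of $p$, rather than merely being contained in it — this is the one place that genuinely uses strong connectivity plus a Chicken-McNugget/numerical-semigroup argument about the set of backward-walk lengths between two vertices of the same $\sim_p$-class being cofinite in $p\mathbb{Z}_{\ge 0}$, and care is needed to get a single $N$ working uniformly over all vertices. Everything after that is bookkeeping with Lemma~\ref{lem:partition} and the monotone product update rule. \qed
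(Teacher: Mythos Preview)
The paper does not actually prove Lemma~\ref{lem:periodicorbit}; it only states the result and refers the reader to Section~3 of~\cite{stabilityfull} for the argument. So there is no in-paper proof to compare against.

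That said, your proposal is correct and is essentially the standard route for this statement. The crux is exactly the claim you flagged: for $N$ a sufficiently large multiple of $p$ one has ${\mathcal N}_{\rm in}^{N}(v) = [v]$ for every vertex $v$. The containment ${\mathcal N}_{\rm in}^{N}(v) \subseteq [v]$ is immediate from iterating~\eqref{eq:inandout}, and the reverse containment is precisely the periodic analogue of primitivity for strongly connected digraphs: the set of closed-walk lengths at any vertex is a numerical semigroup with gcd $p$, hence contains all sufficiently large multiples of $p$, and prepending such a closed walk to a fixed walk $w\to v$ shows that every $w\in[v]$ lies in ${\mathcal N}_{\rm in}^{N}(v)$ once $N$ is large enough. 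Taking a single $N$ working for all (finitely many) pairs is unproblematic. Part~(1) then drops out of the product formula, and your induction for part~(2) using~\eqref{eq:inandout} and the standing assumption that every vertex has an in-neighbor is clean. One cosmetic point: your overloading of the symbol $v_i$ for both the class representative and a generic vertex of $[v_i]$ is harmless but worth tidying in a final write-up.
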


%{\color{blue} For item~2, they may not have the same cardinality.}

\begin{Remark}
	The first item of Lemma~\ref{lem:periodicorbit} says that if there exists an entry of $x_{[v_i]}(0)$ holding the value~$0$, then all the entries of $x_{[v_i]}(N)$ will hold the value~$0$. The second item of Lemma~\ref{lem:periodicorbit}, together with the second item of Lemma~\ref{lem:partition}, implies that the state $x(N)$ is in a periodic orbit (though $N$ is not necessarily the minimum integer for the CBN to enter the periodic orbit). %{\color{blue} Note that the integer $N, divisible by~$p$, is not necessarily the minimum integer for the system to enter a periodic orbit.}%We further refer the reader to Fig.~\ref{} for an illustration.   
\end{Remark}

We refer the reader to Section~3 in~\cite{stabilityfull} for analyses and a proof of the above lemma.
%We note here that the asymptotic behavior described in the second item of Lemma~\ref{lem:periodicorbit} can be captured by the following Boolean system: Consider an auxiliary CBN whose dependency graph $H$ is simply a cycle of    
From item~2 of Lemma~\ref{lem:periodicorbit},  the periodic orbits of the CBN~$G$ correspond one-to-one to the binary necklaces of length~$p$. To see this,  we first let $s = \alpha_0\ldots \alpha_{p-1}$ be a binary necklace, with $\alpha_i$ being either~$0$ or~$1$. Then, the periodic orbit associated with~$s$ is given as follows: we first define a state $x\in \F^n_2$ by setting $x_{[v_i]} = \alpha_i{\bf 1}$, for $i = 0,\ldots, p-1$. Then, from Lemma~\ref{lem:periodicorbit}, the state $x$ is in a periodic orbit (as one can let $N = 0$). Note that if the order of the binary necklace $s$ is $q$ (which divides~$p$), then the corresponding periodic orbit is simply the sequence $\{x, f(x), \ldots, f^{q-1}(x)\}$, taking rotations as equivalent. Conversely, given a periodic orbit, one can first pick a state~$x$ out of the orbit, and then define a binary necklace $x_0x_1\ldots x_{p-1}$, with the $x_i$'s being the states of the vertices $v_i$'s.

\subsection{Reduced systems}\label{ssec:thm1}

%With the preliminaries in the previous subsection, we are now in a position to introduce the notion of a reduced system.  

%{\em Reduced system.} 
We now consider a weakly connected CBN~$G$.  By applying the strong component decomposition (see Definition~\ref{def:SCD}), we obtain its strongly connected components (or simply strong components) $G_1,\ldots, G_q$. An example of weakly connected digraph is provided in Fig.~\ref{original}. We further denote by $p_i$ the loop number of $G_i$, for $i = 1,\ldots, q$. If $p_i > 0$, then within  each strongly connected component $G_i = (V_i, E_i)$,  we can define the equivalence relation $``\sim_{p_i}"$  on its vertex set~$V_i$. From Lemma~\ref{lem:partition}, by choosing an arbitrary vertex $v_{i_0} \in V_i$, together with the following vertices: 
$$v_{i_1} \in \mathcal{N}_{\rm out}(v_{i_0}; G_i),\ldots, v_{i_{p_i - 1}} \in \mathcal{N}_{\rm out}(v_{i_{p_i-2}};G_i),$$ 
we obtain 
equivalence classes $[v_{i_j}]$'s, for $j = 0,\ldots, p_i - 1$, which partition the vertex set $V_i$. If $p_i = 0$, then $G_i$ is comprised only of a single vertex without a self-arc. For consistency, we denote by~$v_{i_0}$ the single vertex and $[v_{i_0}]$ the singleton $\{v_{i_0}\}$.  For the remainder of the paper, we fix these choices of the vertices $v_i$'s, $i = 0,\ldots, p-1$. 

%{\color{red} Zuguang, can you make $G_2$ in Fig 2. be more complicated instead of a cycle? e.g., a strongly connected graph comprised of 6 vertices, and its reduced graph shown in Fig.3 is a cycle of length 3?}

\begin{figure}[ht]
	\centering
	\includegraphics[height=54mm]{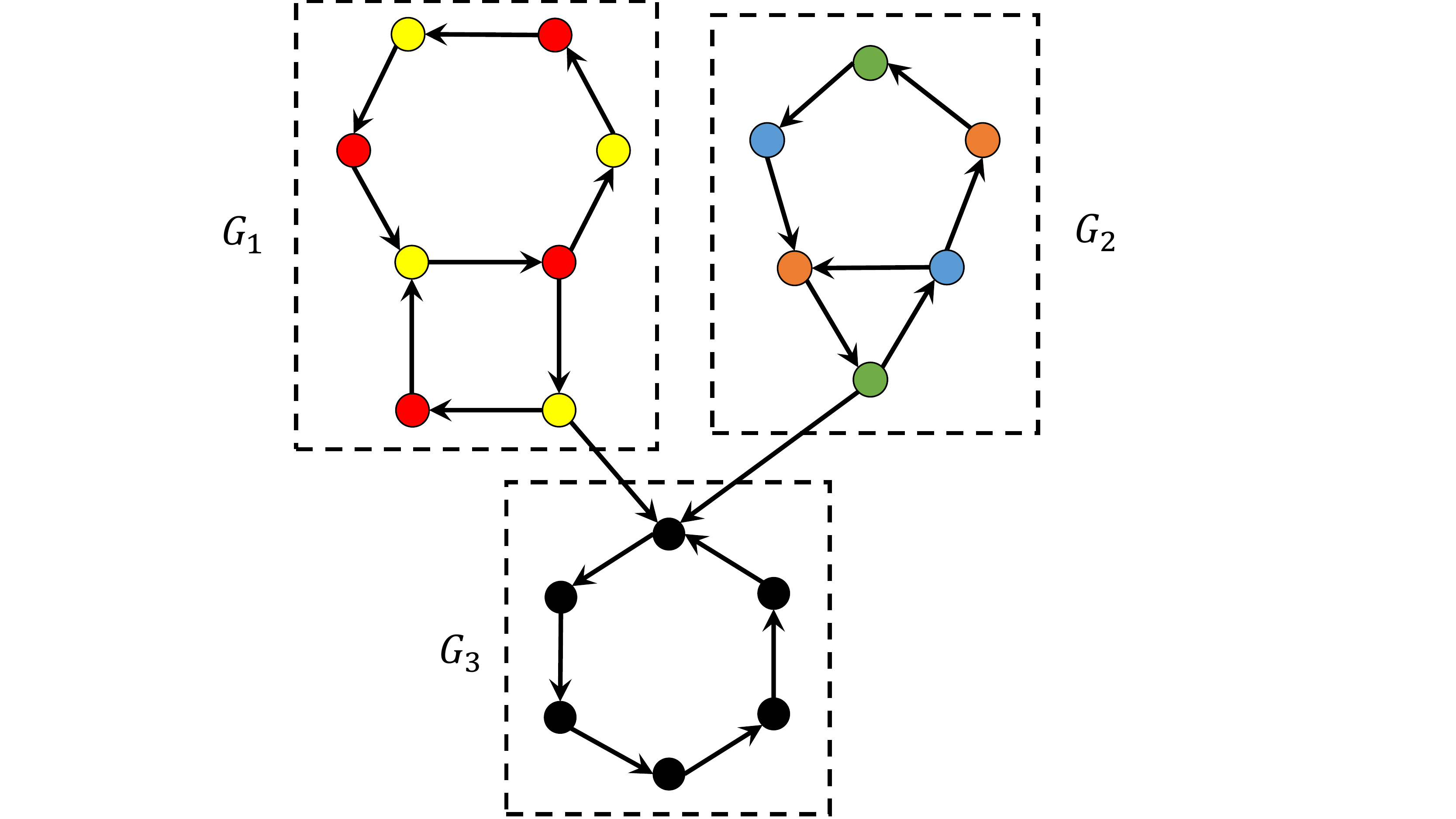}
	\caption{The digraph in the figure is weakly connected, consisting of three strongly connected components, denoted by $G_1,G_2,G_3$. In this example, $p_1=2$, $p_2=3$, and $p_3=6$. Vertices in the same color belong to the same equivalence class. We have that $\cS^0(G)=\{G_1,G_2\}$ and $\cS^1(G)=\{G_3\}$. }
	\label{original}
\end{figure}

We construct the associated reduced system (as a CBN) by defining its dependency graph, denoted by  $H = (U, F)$. The digraph $H$ is weakly connected, comprised of~$q$ strongly connected components, i.e., the numbers of strongly connected components of $G$ and of $H$ are the same. Denote by $H_i = (U_i, F_i)$ for $i = 1, \ldots,q$, the strongly connected components of~$H$. Each $H_i$ is a cycle of length $p_i$, with   
$$
\left\{
\begin{array}{l}
U_i := \{u_{i_0},\ldots, u_{i_{p_i-1}}\}, \\     
F_i := \{u_{i_j} u_{i_{(j+1) \bmod p_i}} \mid j = 0,\ldots, p_i-1\}.
\end{array}
\right. 
$$
In the case $p_i = 0$, we simply have $U_i = \{u_{i_0}\}$ and $F_i = \varnothing$. 
We provide an example of reduced system in Fig.~\ref{reduced}.
In this way, the vertices $u_{i_j}$'s of $H_i$ correspond one-to-one to the equivalence classes $[v_{i_j}]$'s of $G_i$. Moreover, if $p_i > 0$, then the out-neighbor (resp. in-neighbor)  of $u_{i_j}$ in $H_i$ is $u_{i_{(j+1) \bmod p_i}}$ (resp. $u_{i_{(j -1) \bmod p_i}}$), which is consistent with~\eqref{eq:inandout}.  Now, to determine the digraph $H$, it suffices to specify the edges that connect the cycles $H_i$'s. Let $u_{i_j}$ and $u_{i'_{j'}}$, with $i\neq i'$,  be vertices of $H_i$ and of $H_{i'}$, respectively.  Then, $u_{i_j} u_{i'_{j'}}$ is an edge of $H$ if there exists an edge $v'v''$ in $G$ with $v'\in [v_{i_j}]$ and $v''\in [v_{i'_{j'}}]$. The construction of the digraph $H$ is now complete.

\begin{definition}[Reduced system]\label{def:reducedsys}
	 Given a weakly connected CBN $G$, we call a CBN the {\bf reduced system} if its dependency graph is given by the above constructed digraph~$H$. 
\end{definition}

The size of the digraph $H$ is, in general, much smaller than the size of the original dependency graph $G$. An illustrative example is given in Fig.~\ref{reduced}. Nevertheless, we will see soon that the reduced system CBN~$H$ encodes all the information that is needed for determining the asymptotic behavior of the original CBN~$G$.  

 We provide below a few facts about the computational complexity of the above construction of a reduced system. The construction process is comprised of four major steps: 
\begin{enumerate}[label=({S}{\arabic*})]
    \item Find strong components $G_i = (V_i, E_i)$ of $G$. \label{s1} %, i.e., identify each $G_i=(V_i,E_i)$.
    \item Find the loop number $p_i$ for each strong component $G_i$. This can be done by finding all cycles of each  $G_i$.\label{s2}
    \item Partition each $E_i$ into equivalence classes (with respect to the equivalence relation $\sim_{p_i}$). The strong components $H_i$ (cycles) are then constructed. \label{s3} %then constructs With the loop number $p_i$ of $G_i$, partition $V_i$ into equivalent classes according . Then construct $H_i$.
    \item Construct the edges that connect vertices in different $H_i$.\label{s4}
\end{enumerate}
 For~\ref{s1}, the Kosaraju-Sharir algorithm~\cite{sharir1981strong} can be used and its time-complexity is $\mathcal{O}(|V|+|E|)$.  For~\ref{s2}, one can use Johnson's algorithm~\cite{johnson1975finding} to find all cycles of a given strongly connected component. Its time-complexity is  $\mathcal{O}((|V_i|+|E_i|)C_i)$, where $C_i$ is the number of cycles in $G_i$. Since $G_i$ is a subgraph of $G$, $C_i$ is upper bounded by the number of cycles in $G$, which we now denote by~$C$. Thus, the total time-complexity of finding all cycles in all strong components is given by $\mathcal{O}(\sum_{i=1}^q(|V_i|+|E_i|)C) = \mathcal{O}((|V|+|E|)C)$. For~\ref{s3}, the time-complexity is  simply $\mathcal{O}(\sum^q_{i = 1}|V_i|) = \mathcal{O}(|V|)$. Finally, for~\ref{s4}, the time-complexity is $\mathcal{O}(|E|)$. Thus, the time-complexity of~\ref{s2} is dominating over the other steps.  We then conclude that the overall time-complexity of constructing a reduced system is given by $\mathcal{O}((|V|+|E|)C)$.

\begin{figure}[ht]
	\centering
	\includegraphics[height=49mm]{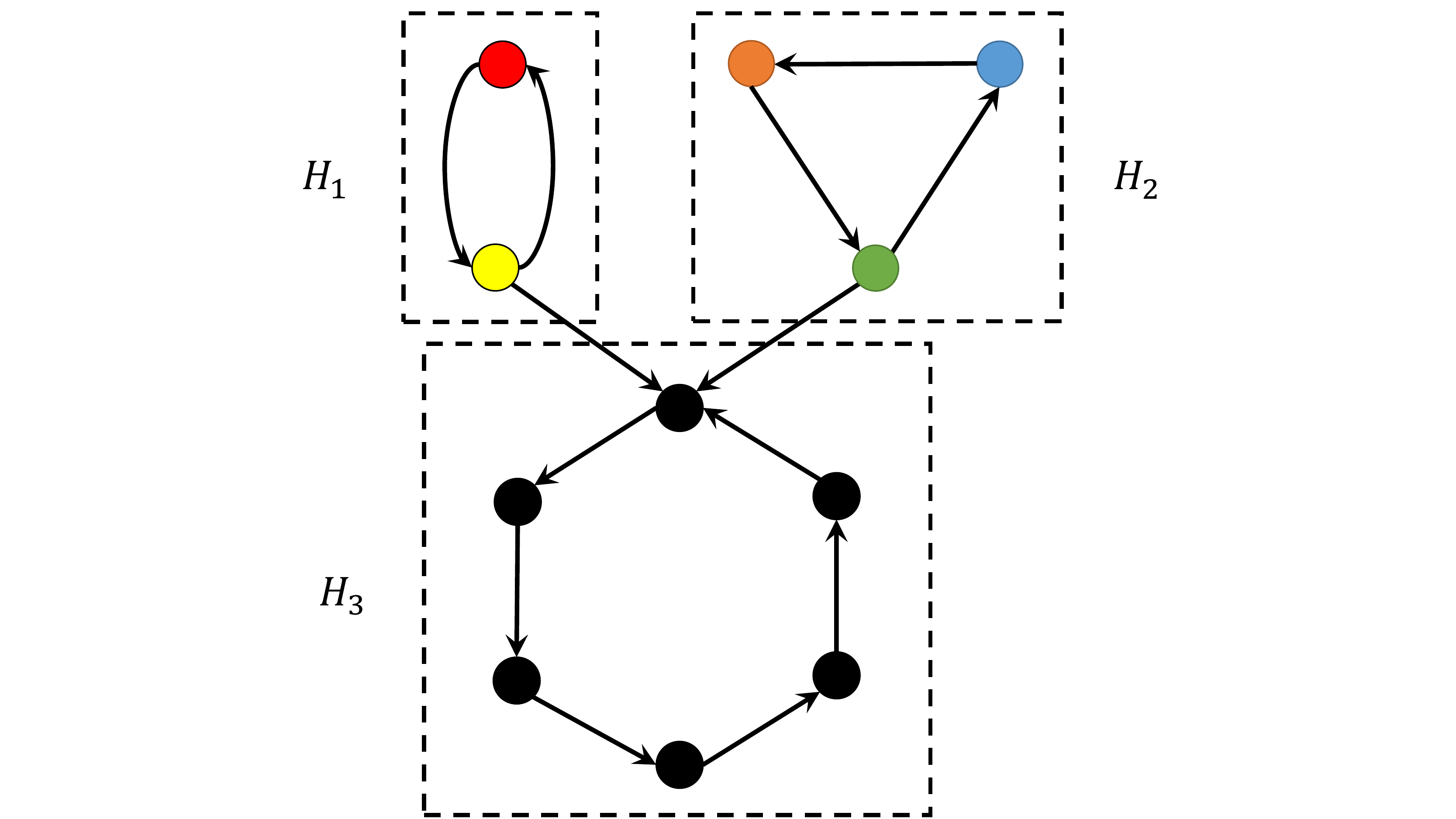}
	\caption{This figure shows the dependency graph of the reduced system $H$ of the system shown in Fig.~\ref{original}. As shown in the figure, $G_1$ and $G_2$ are reduced to cycles of length $2$ and~$3$, respectively.} %$G_2$ is still a cycle of length $3$; $G_3$ is a single vertex.}
	\label{reduced}
\end{figure}

Let $\cS(H):=\{H_1,\ldots, H_q\}$ be the collection of the strongly connected components of $H$. We impose the same partial order on $\cS(H)$ as we did for $\cS(G)$. 
We  further partition $\cS(H)$ into $$\cS(H) = \sqcup^L_{l = 0}\cS^l(H),$$ 
where $\cS^0(H)$ is the collection of the maximal elements and each $\cS^{l}(H)$ is the immediate successor of $\cS^{l-1}(H)$ for $l = 1,\ldots, L$. 
Note that from our construction of~$H$, we have that $H_i\succ H_j$ if and only if $G_i \succ G_j$, and hence $H_i\in \cS^l(H)$ if and only if $G_i \in \cS^l(G)$. 
We also recall an earlier assumption that each vertex of the digraph~$G$ has at least one incoming neighbor. So, if $G_i \in \cS^0(G)$, then $p_i > 0$, and hence the length of the cycle $H_i$ is positive.

We now relate the asymptotic behavior of a CBN~$G$ to the asymptotic behavior of its reduced system~$H$. In the remainder of the section, we will use~$x$ (resp. $y$) to denote the state of the CBN~$G$ (resp. $H$). Let $x(0)$ be an initial condition of $G$. An {\em induced initial condition} $y(0)$ of $H$ is defined as follows: Let $y_{i_j}(0)$ be the initial condition of the vertex $u_{i_j}$, and define 
\begin{equation}\label{eq:initialcondition}
y_{i_j}(0) := \prod_{v_k \in [v_{i_j}]} x_{k}(0).
\end{equation}
In other words, $y_{i_j}(0)$ is $1$ if and only if all the $x_k(0)$'s, for $v_k\in[v_{i_j}]$, are~$1$. With the definitions and notations above, we now state the first main result of the paper:

\begin{theorem}\label{pro:dynamics}  
	Let $x(0)$ be an initial condition of a weakly connected CBN~$G$, and $y(0)$ be the induced initial condition of its reduced system~$H$. Then, there exists a time step $N$ such that $x(N)$ and $y(N)$ are in periodic orbits of $G$ and $H$, respectively. Moreover, for any $t \ge N$, the following hold: 
	\begin{enumerate}    
		\item For any strongly connected component $G_i$,  
		\begin{equation}\label{eq:veryimportant}
		x_{[v_{i_j}]}(t) = y_{i_j}(t) {\bf 1}, \quad \forall j = 0,1,\ldots,p_i - 1. 
		\end{equation}
		\item	If the loop number $p_i$ of $G_i$ is positive, then 
		\begin{equation}\label{eq:lem2thm1}
		y_{i_j}(t + 1) =  y_{i_{(j - 1) \bmod p_i}}(t), \quad \forall j = 0,1,\ldots,p_i - 1.
		\end{equation}
	\end{enumerate}\,
\end{theorem}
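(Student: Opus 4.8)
The plan is to induct on the layer index $l$ in the partition $\cS(G) = \sqcup^L_{l=0} \cS^l(G)$, proving that for each strongly connected component $G_i$ there is a time $N_i$ after which~\eqref{eq:veryimportant} and~\eqref{eq:lem2thm1} hold, and then take $N := \max_i N_i$. The base case is $l = 0$: if $G_i \in \cS^0(G)$, then $G_i$ receives no input from outside, so the CBN dynamics restricted to $V_i$ are exactly those of a strongly connected CBN with positive loop number $p_i$. Lemma~\ref{lem:periodicorbit} then applies directly, giving a time $N_i$ (divisible by $p_i$) after which $x_{[v_{i_j}]}(t) = y_i^{(j)}(t)\mathbf{1}$ for some scalars, with the cyclic shift relation $y^{(j)}_i(t+1) = y^{(j-1 \bmod p_i)}_i(t)$. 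It remains to check that the scalar $y^{(j)}_i$ produced by Lemma~\ref{lem:periodicorbit} coincides with $y_{i_j}$, the state of $u_{i_j}$ in the reduced system $H$: since $H_i$ is an isolated cycle of length $p_i$ when $G_i \in \cS^0(G)$, the reduced dynamics on $U_i$ are just $y_{i_j}(t+1) = y_{i_{(j-1)\bmod p_i}}(t)$, and by item~1 of Lemma~\ref{lem:periodicorbit} together with the definition~\eqref{eq:initialcondition} of the induced initial condition, the two sequences agree at $t = N_i$ (both equal $\prod_{v_k \in [v_{i_j}]} x_k(0)$ there) and satisfy the same recursion, hence agree for all $t \ge N_i$.

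For the inductive step, suppose the claim holds for all strongly connected components in $\cS^0(G) \cup \dots \cup \cS^{l-1}(G)$, and let $G_i \in \cS^l(G)$. Fix a time $N'$ past which~\eqref{eq:veryimportant}–\eqref{eq:lem2thm1} hold for every predecessor of $G_i$. The key observation is that the in-neighbors of a vertex $v \in V_i$ that lie outside $V_i$ all belong to predecessor components, and by the inductive hypothesis each such predecessor component is, after time $N'$, in a state that is constant (equal to some scalar) on each of its equivalence classes. Combined with the recursion $x_k(t) = \prod_{v_j \in \Nin(v_k)} x_j(t-1)$, this means that the ``external input'' feeding into $V_i$ after time $N'$ is itself of a reduced, block-constant form, and — using Lemma~\ref{lem:partition} applied within $G_i$ to see how $\Nin([v_{i_j}])$ decomposes, and the construction of the edges of $H$ connecting $H_i$ to the cycles $H_{i'}$ of predecessor components — this external input depends only on the reduced states $y_{i'_{j'}}$. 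Then I would argue that $x_{[v_{i_j}]}(t)$ becomes constant on each equivalence class $[v_{i_j}]$: either some external canalyzing $0$ propagates in and drives (eventually) the whole class to $0$ in a cyclically-shifting pattern, or no such $0$ arrives and the internal dynamics of the strongly connected CBN $G_i$ (a forced analogue of Lemma~\ref{lem:periodicorbit}) flatten each class. In either case the resulting scalar sequence satisfies the same recursion~\eqref{eq:lem2thm1} and the same forcing as the reduced system's variable $y_{i_j}$, so induction on $t$ (after a suitable $N_i \ge N'$) gives~\eqref{eq:veryimportant}; and~\eqref{eq:lem2thm1} follows from~\eqref{eq:veryimportant} plus the in-neighbor structure of $[v_{i_j}]$ given by~\eqref{eq:inandout}.

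The main obstacle I anticipate is the inductive step's core claim that $x_{[v_{i_j}]}(t)$ eventually becomes constant on each equivalence class even in the presence of a \emph{time-varying} (but eventually periodic and block-constant) external forcing. The clean statement of Lemma~\ref{lem:periodicorbit} is for an autonomous strongly connected CBN; here $G_i$ is driven by its predecessors. I expect the paper handles this by a monotonicity/sandwiching argument: because the CBN is monotone and the forcing is eventually periodic, one can compare the forced trajectory against the trajectories obtained by freezing the forcing at its ``$\limsup$'' and ``$\liminf$'' values over a period, show these upper and lower comparison systems are autonomous strongly connected CBNs to which Lemma~\ref{lem:periodicorbit} applies, and then check the two limiting orbits coincide on each equivalence class. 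Making the bookkeeping of equivalence classes consistent across components — i.e., that the class $[v_{i_j}]$ in $G_i$ receives input exactly from classes $[v_{i'_{j'}}]$ with $u_{i'_{j'}} u_{i_j}$ an edge of $H$, and with the correct phase offset $j' \to j$ — is the other place where care is needed, and it relies essentially on the fact (noted in the excerpt) that the length of any walk between two vertices in the same equivalence class is a multiple of the loop number, so that phases are well-defined. I would also need to confirm that the $N$ obtained is finite, which is automatic since there are finitely many layers and each $N_i$ is finite.
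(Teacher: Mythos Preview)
Your layer-by-layer induction is a natural idea, and the base case $l=0$ is correct. The paper, however, proceeds quite differently: it first establishes both items under the extra hypothesis that $x(0)$ is already in a periodic orbit of $G$ (Lemmas~\ref{lem1:thm1}--\ref{lem2:thm1}, via a counting argument on the number of $0$'s in each $G_i$), and then reduces the general case to this one by introducing an auxiliary sequence $\xi_0 := \rho(x(0))$, $\xi_{t+1} := \rho(f(\xi_t))$, where $\rho$ replaces each block $x_{[v_{i_j}]}$ by $\bigl(\prod_{v_k\in[v_{i_j}]} x_k\bigr)\mathbf{1}$. By construction $\xi_t$ is block-constant with $\xi_{t,[v_{i_j}]} = y_{i_j}(t)\mathbf{1}$ for \emph{every} $t\ge 0$; the substantive step (Lemma~\ref{lem5:thm1}) is then to show that the genuine trajectory $x(t)$ eventually coincides with the trajectory of $G$ started at $\xi_s$, for suitable $s$.

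Your inductive step has a real gap, and it is not the one you flagged. The flattening of $x_{[v_{i_j}]}(t)$ can indeed be obtained: once the predecessors are periodic, $x_{V_i}(t)$ is eventually periodic, and then the same counting contradiction as in the proof of Lemma~\ref{lem1:thm1} shows each class is constant---no sandwiching between $\limsup$/$\liminf$ forcing is needed. The missing piece is the \emph{synchronization} with $y$. You assert that after flattening ``the resulting scalar sequence satisfies the same recursion \ldots and the same forcing as $y_{i_j}$, so induction on $t$ gives~\eqref{eq:veryimportant}.'' The recursion does match for $t\ge N'$, but you supply no base case: nothing in your argument forces $\prod_{v_k\in[v_{i_j}]} x_k(N_i) = y_{i_j}(N_i)$ at the flattening time~$N_i$. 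In the case $l=0$ you had item~1 of Lemma~\ref{lem:periodicorbit} to identify the flattened value explicitly as $\prod_{v_k\in[v_{i_j}]} x_k(0) = y_{i_j}(0)$; no forced analogue of that identity is available at the inductive step, because the relation $\prod_{v_k\in[v_{i_j}]} x_k(t) = y_{i_j}(t)$ can genuinely fail for intermediate~$t$. Two trajectories of the same forced cycle with one entrywise $\ge$ the other need not merge, so this cannot be waved away; it is precisely the difficulty the paper's $\xi_t$ construction is designed to resolve.
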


\begin{Remark}
	Since the vertices $u_{i_j}$'s of $H$ one-to-one correspond to the equivalence classes $[v_{i_j}]$'s,  
	the first item of Theorem~\ref{pro:dynamics} implies that the asymptotic behavior of a CBN can be uniquely determined by the asymptotic behavior of its reduced system. The second item implies that in a periodic orbit, the dynamics $x_{V_i}(t)$ proceeds as if $G_i$ was a disjoint strongly connected component (compared with the second item of Lemma~\ref{lem:periodicorbit}).
\end{Remark}

Since the period of a periodic orbit of $x(t)$ is completely determined by the $x_{V_i}(t)$'s, for $p_i> 0$, we have the following fact as an immediate consequence of Theorem~\ref{pro:dynamics}.  

\begin{corollary}\label{lem3:thm1}
	Let $N_G > 0$ be the least common multiple of the loop numbers $p_i$. Then, the period of any periodic orbit of system $G$ divides $N_G$. %{\color{red} However, it is not true that every divisor of $N_G$ can be the period of a periodic orbit.} 
\end{corollary}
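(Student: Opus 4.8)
The plan is to read the corollary straight off Theorem~\ref{pro:dynamics} by showing that, once the trajectory has entered a periodic orbit, it is $N_G$-periodic. Fix a periodic orbit of $G$ and pick a state on it of the form $x(t)$ with $t\ge N$, where $N$ is the time step furnished by Theorem~\ref{pro:dynamics}; let $p$ be the period of that orbit. By item~1 of Theorem~\ref{pro:dynamics}, for every $t\ge N$ the state $x(t)$ is completely determined by the tuple of reduced-system values $\{y_{i_j}(t)\}_{i,j}$ (this covers all strong components, including the singletons with $p_i=0$, for which $[v_{i_0}]$ is a single vertex). Hence it suffices to prove that $y_{i_j}(t+N_G)=y_{i_j}(t)$ for all $i,j$ and all sufficiently large $t$: once this holds we get $x(s+N_G)=x(s)$ for all large $s$, and since $x(s)$ lies on a periodic orbit of (minimal) period $p$, this forces $p\mid N_G$.

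First I would handle the strong components $G_i$ with $p_i>0$. Iterating the shift relation~\eqref{eq:lem2thm1} of Theorem~\ref{pro:dynamics} a total of $p_i$ times, we obtain, for $t\ge N$,
$$
y_{i_j}(t+p_i)=y_{i_{(j-p_i)\bmod p_i}}(t)=y_{i_j}(t) ,
$$
so $y_{U_i}(\cdot)$ is $p_i$-periodic on $[N,\infty)$. Since $p_i$ divides $N_G$ by definition of the least common multiple, $y_{i_j}(t+N_G)=y_{i_j}(t)$ for all such $j$ and all $t\ge N$.

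It remains to treat the components $G_i$ with $p_i=0$, i.e.\ the singletons $\{v_{i_0}\}$ without a self-arc. For these, $H_i=\{u_{i_0}\}$ and, in the reduced CBN~$H$, the update of $u_{i_0}$ reads $y_{i_0}(t+1)=\prod_{u_k\in\mathcal{N}_{\rm in}(u_{i_0};H)}y_k(t)$, with every in-neighbor $u_k$ lying in a strong component that is a strict predecessor of $H_i$. I would argue by induction on the level $l$ with $H_i\in\cS^l(H)$, proving: for every $H_k\in\cS^l(H)$, each coordinate $y_k$ is $N_G$-periodic for all sufficiently large $t$. The base case $l=0$ is covered by the previous paragraph, since every $G_i\in\cS^0(G)$ has $p_i>0$ (as recalled just before the statement). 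For the inductive step, a level-$l$ component with positive loop number is again handled by the previous paragraph; a level-$l$ component with $p_i=0$ has all its in-neighbors at levels strictly below $l$, so by the inductive hypothesis each $y_k(t)$ appearing in the product is $N_G$-periodic for large $t$, and then $y_{i_0}(t+1)=\prod_k y_k(t)$ is $N_G$-periodic for large $t$ as well.

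The only genuine subtlety — the content behind calling the corollary "immediate" — is this last descent through the partial order, which makes precise the claim that the period is governed entirely by the components with positive loop number, the $p_i=0$ components being slaved to their predecessors and contributing no new period. Everything else is a direct unwinding of Theorem~\ref{pro:dynamics}, and I expect the induction above to be the only place requiring any care.
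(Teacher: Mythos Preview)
Your proof is correct and follows the same line the paper indicates. The paper does not actually supply a proof of the corollary; it only says that ``the period of a periodic orbit of $x(t)$ is completely determined by the $x_{V_i}(t)$'s, for $p_i>0$,'' and declares the corollary immediate from Theorem~\ref{pro:dynamics}. Your argument makes this precise: you use item~2 of the theorem to get $p_i$-periodicity (hence $N_G$-periodicity) on each positive-loop-number component, and then your level induction carries the $N_G$-periodicity down to the $p_i=0$ singletons via their in-neighbors, which is exactly what justifies the paper's one-line claim that those components are slaved to their predecessors.
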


%The next subsection provides a proof for Theorem~\ref{pro:dynamics}, following establishment of several properties about the dynamics of the system $G$ and the reduced system $H$, which are necessary for the proof. 

\subsection{Analyses and Proof of Theorem~\ref{pro:dynamics}}\label{ssec:pfthm1} 
We now have a sequence of lemmas that lead to the proof of Theorem~\ref{pro:dynamics}. We first deal with the special case where $x(0)$ is already in a periodic orbit of system~$G$ (Lemmas~\ref{lem1:thm1} and~\ref{lem2:thm1}). We then extend the results to a general case using the facts established in Lemmas~\ref{lem4:thm1} and~\ref{lem5:thm1}. We start with the following fact:     

%first one below in fact establishes a special case of Theorem~\ref{pro:dynamics} when $x(0)$ is already in a periodic orbit of system $G$.

\begin{lemma}\label{lem1:thm1}
	Let $x(0)$ be in a periodic orbit of system $G$. Then,~\eqref{eq:veryimportant} holds for any $t \ge 0$ (i.e., $N = 0$). 
\end{lemma}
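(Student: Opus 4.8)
The plan is to isolate three ingredients: a graph-theoretic fact about strongly connected digraphs, the trivial observation that a trajectory started inside a periodic orbit never leaves it, and a bookkeeping argument using idempotency of Boolean multiplication to match the ``collapsed'' dynamics of $G$ with the update rule of $H$. Throughout I rely on the identity $x_k(t)=\prod_{v_m\in\mathcal{N}_{\rm in}^{t}(v_k)}x_m(0)$ recalled in Subsection~\ref{ssec:CBN}.

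First I would show that, if $x(0)$ lies in a periodic orbit, then for every $i$ and $j$ all entries of $x_{[v_{i_j}]}(0)$ coincide. If $p_i=0$ the class $[v_{i_0}]$ is a singleton and there is nothing to do, so assume $p_i>0$, pick $v_k,v_l\in[v_{i_j}]$, and pick $p\ge 1$ with $x(p)=x(0)$. Since $G_i$ is strongly connected, $v_k\sim_{p_i}v_l$, and $p_i$ equals the gcd of the lengths of the cycles through $v_k$, a standard fact about the cyclic structure of strongly connected digraphs (together with a Frobenius-coin/numerical-semigroup argument; compare the analysis in~\cite{stabilityfull}) supplies a walk inside $G_i$ from $v_l$ to $v_k$ whose length $M$ is a common multiple of $p$ and $p_i$. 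Then $v_l\in\mathcal{N}_{\rm in}^{M}(v_k)$, so $x_k(M)=\prod_{v_m\in\mathcal{N}_{\rm in}^{M}(v_k)}x_m(0)\le x_l(0)$, while $p\mid M$ forces $x_k(M)=x_k(0)$; hence $x_k(0)\le x_l(0)$, and by symmetry equality. Writing $\alpha_{i_j}$ for the common value (and $\alpha_{i_0}:=x_{i_0}(0)$ when $p_i=0$), idempotency of $\F_2$-multiplication and~\eqref{eq:initialcondition} give $\alpha_{i_j}=\prod_{v_k\in[v_{i_j}]}x_k(0)=y_{i_j}(0)$, i.e.~\eqref{eq:veryimportant} at $t=0$. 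Since $x(0)$ lies in a periodic orbit, so does $x(t)$ for every $t\ge 0$ (the same orbit); applying the above with $x(t)$ in place of $x(0)$ produces scalars $\beta_{i_j}(t)\in\F_2$ with $x_{[v_{i_j}]}(t)=\beta_{i_j}(t)\,\mathbf{1}$ and $\beta_{i_j}(0)=\alpha_{i_j}=y_{i_j}(0)$.

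It then remains to prove $\beta_{i_j}(t)=y_{i_j}(t)$ for all $t$. Fixing $i,j,t$ and using $x_k(t+1)=\prod_{v_m\in\mathcal{N}_{\rm in}(v_k)}x_m(t)$, idempotency, and the previous paragraph at time $t+1$, one gets
\begin{equation*}
\beta_{i_j}(t+1)=\prod_{v_k\in[v_{i_j}]}x_k(t+1)=\prod_{v_m\in\mathcal{N}_{\rm in}([v_{i_j}])}x_m(t).
\end{equation*}
Now $\mathcal{N}_{\rm in}([v_{i_j}])$ splits into its part inside $G_i$ (empty when $p_i=0$, and contained in $[v_{i_{(j-1)\bmod p_i}}]$ when $p_i>0$, by~\eqref{eq:inandout} applied to $G_i$) and its parts inside the other strong components; by the construction of $H$, the classes $[v_{i'_{j'}}]$ meeting $\mathcal{N}_{\rm in}([v_{i_j}])$ are exactly those with $u_{i'_{j'}}\in\mathcal{N}_{\rm in}(u_{i_j};H)$. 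Grouping the product by these classes and using the previous paragraph at time $t$ (so $x_m(t)=\beta_{i'_{j'}}(t)$ for $v_m\in[v_{i'_{j'}}]$) together with idempotency to collapse each class to a single factor yields
\begin{equation*}
\beta_{i_j}(t+1)=\prod_{u_{i'_{j'}}\in\mathcal{N}_{\rm in}(u_{i_j};H)}\beta_{i'_{j'}}(t),
\end{equation*}
which is exactly the update rule of the reduced CBN $H$. Since $\beta(\cdot)$ thus obeys the same first-order recursion as $y(\cdot)$ with the same initial data, induction gives $\beta_{i_j}(t)=y_{i_j}(t)$ for all $t\ge 0$, i.e.~\eqref{eq:veryimportant} for all $t\ge 0$.

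The step I expect to be the real obstacle is the graph-theoretic input above: producing, inside the strong component $G_i$, a walk between two $\sim_{p_i}$-equivalent vertices whose length is simultaneously a multiple of $p_i$ and of the orbit period $p$. This rests on the ``realizability of all sufficiently large admissible walk lengths'' in a strongly connected digraph (equivalently: the gcd of the cycle lengths through any fixed vertex is the loop number, plus a numerical-semigroup argument), which has to be quoted with care from, or re-derived in the spirit of, the strongly connected theory. Everything downstream is routine manipulation with Boolean idempotency and the explicit edge set of $H$.
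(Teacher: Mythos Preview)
Your proof is correct, and the inductive second half is essentially the paper's argument in a tidier form (you derive the recursion for $\beta$ directly, whereas the paper splits into the cases $y_{i_j}(t+1)=1$ and $y_{i_j}(t+1)=0$; the content is the same).

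Where you genuinely diverge is in the first half, establishing that vertices in the same class $[v_{i_j}]$ carry the same value along a periodic orbit. The paper argues by contradiction: assuming $x_k(t_0)=0$ and $x_l(t_0)=1$ with $v_k,v_l\in[v_{i_j}]$, it runs an \emph{auxiliary} CBN on the strong component $G_i$ alone with initial data $z(t_0):=x_{V_i}(t_0)$, invokes Lemma~\ref{lem:periodicorbit} to see that the number of zeros in this auxiliary system strictly increases (since the mixed class gets flattened to all zeros), and then compares with the full system, where $z_j(t)\ge x_j(t)$ forces at least as many zeros but periodicity forbids a strict increase. Your route is instead a direct monotonicity argument: manufacture a walk in $G_i$ from $v_l$ to $v_k$ whose length $M$ is a common multiple of the orbit period and $p_i$, deduce $x_k(0)=x_k(M)\le x_l(0)$, and symmetrize. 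The paper's version has the advantage of using Lemma~\ref{lem:periodicorbit} as a black box and never touching the Frobenius/numerical-semigroup fact; your version is more self-contained and conceptually cleaner, at the cost of having to justify that all sufficiently large multiples of $p_i$ are realizable as closed-walk lengths through a fixed vertex of $G_i$ (which, as you note, is the standard consequence of strong connectivity plus the definition of the loop number). Both are valid; yours avoids the auxiliary-system machinery entirely.
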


\begin{proof}
	We first show that if the two vertices $v_k$ and $v_l$ belong to the same equivalence class $[v_{i_j}]$ in the strongly connected component $G_i$, then $x_{k}(t) = x_{l}(t)$. 
	
	Suppose to the contrary that there exists a time step $t_0 \ge 0$ such that $x_k(t_0) \neq x_l(t_0)$. Without loss of generality, we let $x_k(t_0) = 0$ and $x_l(t_0) = 1$. Let $\sigma_i(t_0)$ be the total number of $0$'s held by the vertices in $G_i$ at time $t_0$. Since $x(t_0)$ is in a periodic orbit, if we let $T$ be the period, then $\sigma_i(t_0) = \sigma_i(t_0 + mT)$ for any $m \ge 0$. 
	
	Now, consider an auxiliary CBN whose dependency graph is $G_i$. Let a state $z(t_0)$ of $G_i$ be defined as $z(t_0):= x_{V_i}(t_0)$. 
	Appealing to Lemma~\ref{lem:periodicorbit}, we obtain a positive integer $N'$, as a multiple of $p_i$, such that $z(t_0+ N')$ is in a periodic orbit of system $G_i$, and moreover, 
	$$z_{[v_{i_j}]}(t_0 + m N') = \prod_{v_k\in [v_{i_j}]} z_k(t_0){\bf 1}, \quad \forall m \ge 1.$$
	Let $\sigma'_i(t)$, for $t \ge t_0$, be the total number of $0$'s in $G_i$.  
	Since $z_k(t_0) = 0$ and $z_l(t_0) = 1$, and $v_k$, $v_l$ belong to the same equivalence class $[v_{i_j}]$, we have
	$\sigma'_i(t_0) < \sigma'_i(t_0 + N') = \sigma'_i(t_0 + mN')$ for any $m > 0$.  
	
	On the other hand, for any vertex~$v_j$ in $G_i$ and any $t \ge t_0$, we have ${\mathcal N}^{t - t_0}_{\rm in}(v_j; G_i) \subseteq  {\mathcal N}^{t - t_0}_{\rm in}(v_j)$. %where the argument $G_i$ indicates that the in-neighbors are taken within $G_i$. 
	Thus, 
	$$
	z_j(t) = \prod_{v_k\in {\mathcal N}^{t - t_0}_{\rm in}(v_j; G_i)} z_k(t_0) 
	\ge \prod_{v_k\in {\mathcal N}^{t - t_0}_{\rm in}(v_j)} x_k(t_0) = x_j(t),
	$$
	which, in particular, implies that $\sigma_i(t) \ge \sigma'_i(t)$. But then, 
	\begin{align*}
	\sigma_i(t_0 + TN') = \sigma_i(t_0) &= \sigma'_i(t_0) \\
	&< \sigma'_i(t_0 + TN')\le \sigma_i(t_0 + TN'), 
	\end{align*}
	which is a contradiction. We have thus shown that $x_k(t) = x_l(t)$ if $v_k$ and $v_l$ belong to the same equivalence class. 
	
	Next, we show that~\eqref{eq:veryimportant} holds. The proof will be carried out by induction on time step~$t$. For the base case $t = 0$,~\eqref{eq:veryimportant} holds by~\eqref{eq:initialcondition} and the fact that the vertices in the same equivalence class hold the same value. For the inductive step, we assume that~\eqref{eq:veryimportant} holds for $t$, and we prove for $(t + 1)$. It suffices to show that $x_{k}(t + 1) = y_{i_j}(t + 1)$ for some (and hence any) $v_k\in [v_{i_j}]$. We first show that if $y_{i_j}(t + 1) = 1$, then $x_k(t + 1) = 1$. Using the induction hypothesis, we have
	$$
	y_{i_j}(t + 1) = \prod_{u_{i'_{j'}}\in {\mathcal N}_{\rm in}(u_{i_j})} y_{i'_{j'}}(t) = \prod_{v_l\in {\mathcal N}_{\rm in}([v_{i_j}])} x_l(t).
	$$
	For any $v_k \in [v_{i_j}]$, we have ${\mathcal N}_{\rm in}(v_k)\subseteq {\mathcal N}_{\rm in}([v_{i_j}])$, and hence
	$$
	1 = y_{i_j}(t + 1) \le \prod_{v_l\in {\mathcal N}_{\rm in}(v_k)} x_l(t) = x_k(t + 1),
	$$
	which implies that $x_k(t + 1) = 1$. We next show that if $y_{i_j}(t + 1) = 0$, then $x_k(t + 1) = 0$. Let $u_{i'_{j'}}\in {\mathcal N}_{\rm in}(u_{i_j})$ be such that $y_{i'_{j'}}(t) = 0$. We further let $v_k$ and $v_l$ be two vertices of $G$ such that $v_k\in [v_{i_j}]$, $v_l\in [v_{i'_{j'}}]$, and $v_l v_k$ is an edge. Then, $v_l\in {\mathcal N}_{\rm in}(v_k)$, and moreover,  by the induction hypothesis, $x_l(t) = y_{i'_{j'}}(t) = 0$. We thus conclude that $x_k(t + 1) = 0$. This completes the proof.   
\end{proof} 

%We still let $x(0)$ be in a periodic orbit. 
%Following Lemma~\ref{lem1:thm1}, 
For each $G_i$ of positive loop number $p_i$, we let $\#_i(t):= \sum^{p_i - 1}_{j = 0}  y_{i_j}(t)$. The following result establishes the second item of Theorem~\ref{pro:dynamics}:

\begin{lemma}\label{lem2:thm1}
	Let $x(0)$ be in a periodic orbit of system $G$. Then, for each $G_i$ with positive $p_i$, $\#_i(t)$ is constant. Thus, for any $t \ge 0$,     
	\begin{equation}%\label{eq:lem2thm1}
	y_{i_j}(t + 1) =  y_{i_{(j - 1) \bmod p_i}}(t), \quad j = 0, 1, \ldots, p_i -1. \nonumber
	\end{equation}\,
\end{lemma}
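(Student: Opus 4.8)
The plan is to show that the integer-valued function $\#_i(t)$ is non-increasing in $t$, and then to use the periodicity of the trajectory to force it to be constant; once $\#_i$ is constant, the componentwise identity \eqref{eq:lem2thm1} drops out by a squeezing argument.

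First I would record a pointwise inequality. By the construction of $H$, the strong component $H_i$ is the cycle on $u_{i_0},\ldots,u_{i_{p_i-1}}$ with edges $u_{i_j}u_{i_{(j+1)\bmod p_i}}$, so the unique in-neighbor of $u_{i_j}$ lying inside $H_i$ is $u_{i_{(j-1)\bmod p_i}}$; every other element of $\mathcal{N}_{\rm in}(u_{i_j})$ (if any) lies in some $H_{i'}$ with $i'\ne i$. Hence the update rule of the CBN $H$ gives
\begin{equation*}
y_{i_j}(t+1) \;=\; \prod_{u_{i'_{j'}}\in \mathcal{N}_{\rm in}(u_{i_j})} y_{i'_{j'}}(t) \;\le\; y_{i_{(j-1)\bmod p_i}}(t)
\end{equation*}
for every $j$ and every $t\ge 0$. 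Summing over $j=0,\ldots,p_i-1$ and reindexing the right-hand side yields $\#_i(t+1)\le\#_i(t)$, so $t\mapsto\#_i(t)$ is non-increasing on $\mathbb{Z}_{\ge 0}$.

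Next I would invoke periodicity. Since $x(0)$ lies in a periodic orbit of $G$, there is $T\ge 1$ with $x(t+T)=x(t)$ for all $t\ge 0$. By Lemma~\ref{lem1:thm1}, $x_{[v_{i_j}]}(t)=y_{i_j}(t){\bf 1}$ for all $t\ge 0$, so $y_{i_j}(t+T)=y_{i_j}(t)$ for all $t\ge 0$ and all $i,j$; in particular $\#_i(t+T)=\#_i(t)$. A non-increasing function on $\mathbb{Z}_{\ge 0}$ that is $T$-periodic is constant, so $\#_i(t)=\#_i(0)$ for all $t$. Finally, with $\#_i(t+1)=\#_i(t)$ the summed inequality above becomes an equality between two sums whose terms satisfy $y_{i_j}(t+1)\le y_{i_{(j-1)\bmod p_i}}(t)$ termwise; therefore equality must hold in each term, which is exactly \eqref{eq:lem2thm1}.

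The argument is short, and I do not anticipate a genuine obstacle; the only points requiring a little care are the combinatorial bookkeeping that identifies the in-neighbor of $u_{i_j}$ within the cycle $H_i$, and the degenerate case $p_i=1$, in which $H_i$ is a single vertex carrying a self-arc and the asserted identity reads $y_{i_0}(t+1)=y_{i_0}(t)$, consistent with the same computation.
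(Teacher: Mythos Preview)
Your argument is correct and essentially identical to the paper's: both use the pointwise inequality $y_{i_j}(t+1)\le y_{i_{(j-1)\bmod p_i}}(t)$ to deduce that $\#_i$ is nonincreasing, invoke Lemma~\ref{lem1:thm1} to obtain periodicity of $y$ and hence constancy of $\#_i$, and then force termwise equality. The only cosmetic difference is that the paper phrases the inequality as ``$y_{i_{(j-1)\bmod p_i}}(t)=0\Rightarrow y_{i_j}(t+1)=0$'' and summarizes the periodicity step as ``$y(0)$ is in a periodic orbit of $H$''.
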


\begin{proof}
	%Consider the reduced system $H$, with $H_i$ the cycle of length $p_i$ corresponding to the strongly connected component $G_i$. %Let $y(0)$ be the induced initial condition for system~$H$. 
	%Let $u_{i_j}$ be any vertex of $H_i$, corresponding to the equivalence class $[v_{i_j}]$. 
	For any $t\ge 0$, we have
	$
	y_{i_j}(t + 1) = \prod_{u_k \in {\mathcal N}_{\rm in}(u_{i_j})} y_{k}(t) 
	$. 
	Since $p_i$ is positive, $u_{i_{(j-1) \bmod p_i}}$ is in ${\mathcal N}_{\rm in}(u_{i_j})$. Thus, if $y_{i_{(j - 1)\bmod p_i}}(t) = 0$, then $y_{i_j}(t + 1) = 0$. The above arguments imply that $\#_i(t)$, which is the number of~$1$'s possessed by the vertices of $H_i$, is nonincreasing in~$t$. From Lemma~\ref{lem1:thm1}, $y(0)$ is in a periodic orbit of $H$. It thus follows that $\#_i(t) = \#_i(0)$ for any $t\ge 0$, and hence~\eqref{eq:lem2thm1} has to hold. 
\end{proof}

We have established Theorem~\ref{pro:dynamics}, but only for the case where $x(0)$ is in a periodic orbit. 
We now extend the results to a general case where $x(0)$ is not necessarily in a periodic orbit of system~$G$. 

Lemma~\ref{lem1:thm1} implies that if $x(0)$ is in a periodic orbit of system $G$, then $y(0)$ is in a periodic orbit of the reduced system $H$ of the same period. Conversely, we have the following fact:

\begin{lemma}\label{lem4:thm1}
	Suppose that for each equivalence class $[v_{i_j}]$, $x_{[v_{i_j}]}(0) = y_{i_j}(0){\bf 1}$, and moreover, $y(0)$ is in a periodic orbit of system $H$; then,  $x(0)$ is in a periodic orbit of system $G$. 
\end{lemma}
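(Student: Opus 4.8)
The plan is to show that $x(0)$ returns to itself after a number of steps, which we take to be a common period. Since $y(0)$ lies in a periodic orbit of $H$, there is a period $T$ (divisible by $\mathrm{lcm}\{p_i : p_i > 0\}$) with $y(T) = y(0)$. I would argue that the same $T$ works for $x$, i.e. $x(T) = x(0)$, which would immediately place $x(0)$ in a periodic orbit of $G$. Because a CBN is monotone and $x_i(t) = \prod_{v_k \in \mathcal{N}_{\mathrm{in}}^t(v_i)} x_k(0)$ depends only on which coordinates of $x(0)$ are zero, it suffices to track the zero/one pattern of $x(t)$ and show it is $T$-periodic.

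The first step is to establish that the hypothesis $x_{[v_{i_j}]}(0) = y_{i_j}(0)\mathbf{1}$ is preserved under the dynamics, i.e. that for \emph{all} $t \ge 0$ one has $x_{[v_{i_j}]}(t) = y_{i_j}(t)\mathbf{1}$ for every strong component $G_i$ and every $j$. This is exactly the computation already carried out in the inductive step of the proof of Lemma~\ref{lem1:thm1}: the base case is the hypothesis of the present lemma, and the inductive step there uses only that vertices in a common equivalence class start with a common value and the containment relations $\mathcal{N}_{\mathrm{in}}(v_k) \subseteq \mathcal{N}_{\mathrm{in}}([v_{i_j}])$ together with the existence, for any $u_{i'_{j'}} \in \mathcal{N}_{\mathrm{in}}(u_{i_j})$, of an edge $v_l v_k$ in $G$ with $v_k \in [v_{i_j}]$ and $v_l \in [v_{i'_{j'}}]$; none of that used that $x(0)$ was periodic. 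So I would simply invoke that argument verbatim to conclude $x_{[v_{i_j}]}(t) = y_{i_j}(t)\mathbf{1}$ for all $t$.

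The second step is then immediate: taking $t = T$ and $t = 0$ and using $y(T) = y(0)$, we get $x_{[v_{i_j}]}(T) = y_{i_j}(T)\mathbf{1} = y_{i_j}(0)\mathbf{1} = x_{[v_{i_j}]}(0)$ for every equivalence class. Since the equivalence classes $[v_{i_j}]$ over all $i$ and $j$ partition $V$ (by Lemma~\ref{lem:partition} applied within each $G_i$, together with $V = \sqcup_i V_i$), this yields $x(T) = x(0)$. Hence the trajectory from $x(0)$ is periodic, so $x(0)$ lies in a periodic orbit of $G$.

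The only subtlety — and the step I expect to require the most care in writing — is the first one: making sure the reuse of the Lemma~\ref{lem1:thm1} argument is legitimate, since that proof is stated under the standing assumption "$x(0)$ in a periodic orbit." Concretely I should check that the induction on $t$ in that proof never secretly invoked periodicity (it does not: periodicity was only used in the earlier part of Lemma~\ref{lem1:thm1}, to show vertices within a class agree at time $0$ — which here is a hypothesis — and the inductive propagation step is self-contained). An alternative, fully self-contained route avoiding any appeal to the internal structure of the earlier proof is to first prove $x(t) = x(0)$ directly: show by the monomial formula that a zero at $v_k \in [v_{i_j}]$ at time $t$ forces $y_{i_j}(t) = 0$ and conversely, then conclude via $y(T) = y(0)$. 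Either way the argument is short; I would present the first version as it reuses machinery already in place.
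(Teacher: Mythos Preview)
Your proposal has a genuine gap in the first step. The inductive step of Lemma~\ref{lem1:thm1} is \emph{not} self-contained: the sentence ``It suffices to show that $x_k(t+1)=y_{i_j}(t+1)$ for some (and hence any) $v_k\in[v_{i_j}]$'' already invokes the class-agreement fact at time $t+1$ (not just at time~$0$) that was proved in the first half of that lemma using periodicity of~$x$. In the case $y_{i_j}(t+1)=0$, the argument only exhibits a \emph{single} edge $v_lv_k$ with $v_l\in[v_{i'_{j'}}]$, $y_{i'_{j'}}(t)=0$, and concludes $x_k(t+1)=0$ for that particular~$v_k$. Nothing in the induction hypothesis forces another vertex $v_{k'}\in[v_{i_j}]$, all of whose in-neighbors lie in classes with $y$-value~$1$ at time~$t$, to also take value~$0$. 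Concretely: let $G_i$ be the $4$-cycle $v_1\to v_2\to v_3\to v_4\to v_1$ (so $p_i=2$ and $[v_1]=\{v_1,v_3\}$), and add an external edge $v_5\to v_1$; if the class-constant state at time~$t$ has $x_{5}(t)=0$ and $x_2(t)=x_4(t)=1$, then $x_1(t+1)=0\neq 1=x_3(t+1)$, so class-constancy breaks at time~$t+1$.

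The paper's proof handles exactly this point: it first establishes class-constancy for all~$t$ by contradiction, and the contradiction uses periodicity of~$y$ in~$H$ in an essential way---via the rotation identity $y_{i_j}(t+1)=y_{i_{(j-1)\bmod p_i}}(t)$ (the analogue of Lemma~\ref{lem2:thm1} for~$H$). That identity forces the internal predecessor class $[v_{i_{(j-1)\bmod p_i}}]$ to already carry a zero whenever $y_{i_j}(t+1)=0$, so \emph{every} vertex of $[v_{i_j}]$ sees a zero through its in-neighbor inside~$G_i$. Your alternative ``monomial formula'' route has the same gap: from $u_{i'_{j'}}\in\mathcal{N}^t_{\rm in}(u_{i_j})$ one cannot in general produce a vertex of $[v_{i'_{j'}}]$ inside $\mathcal{N}^t_{\rm in}(v_k)$ for a \emph{prescribed}~$v_k$. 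You need to insert the periodicity-of-$y$ argument (or an equivalent) before the induction can close.
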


\begin{proof}
	It suffices to show that for any $t \ge 0$, the vertices that belong to the same equivalence class hold the same value.  If this holds, then we can apply the same arguments as in the proof of Lemma~\ref{lem1:thm1} and obtain that $x_{[v_{i_j}]}(t) = y_{i_j}(t) {\bf 1}$ for any $t \ge 0$ and any equivalence class $[v_{i_j}]$.  
	
	Suppose not, and let $(t_0 + 1)$, for $t_0 \ge 0$, be the first time step such that there exist two vertices $v_k$ and $v_l$ in an equivalence class $[v_{i_j}]$ such that $x_k(t_0 + 1) \neq x_l(t_0 + 1)$. Without loss of generality, we assume that $x_k(t_0 + 1) = 0$ and $x_l(t_0 + 1) = 1$. 
	Since the equivalence class $[v_{i_j}]$ contains at least two vertices $v_k$ and $v_l$, the loop number $p_i$ of $G_i$ is positive, and hence the equivalence class $[v_{i_{(j - 1) \bmod p_i}}]$ exists. Furthermore, since 
	${\mathcal N}_{\rm in}([v_{i_j}]; G_i) = [v_{i_{(j-1) \bmod p_i}}]$, we must have $x_{[v_{i_{(j - 1) \bmod p_i}}]}(t_0) = \bf{1}$ because otherwise, $$x_{[v_{i_j}]}(t_0 + 1)  = x_{[v_{i_{(j - 1) \bmod p_i}}]}(t_0) = {\bf 0}.$$ 
	Correspondingly, we have $y_{i_{(j - 1) \bmod p_i}}(t_0) = 1$. 
	
	On the other hand, since $x_{k}(t_0 + 1) = 0$, there must exist a vertex $v_{k'} \in {\mathcal N}_{\rm in}(v_k) - [v_{i_{(j-1)\bmod p_i}}]$ such that $x_{k'}(t_0) = 0$. We let $[v_{i'_{j'}}]$ be the equivalence class that contains $v_{k'}$. By construction of~$H$, we have $u_{i'_{j'}} \in {\mathcal N}_{\rm in}(u_{i_j})$. From the assumption on~$t_0$, we have $x_{[v_{i'_{j'}}]}(t_0) = {\bf 0}$ and $y_{i'_{j'}}(t_0) = 0$. It then follows that $y_{i_j}(t_0 + 1) = 0$. But then, from~\eqref{eq:lem2thm1} and the fact that $y(0)$ is in a periodic orbit of system $H$, we have 
	$y_{i_{(j - 1) \bmod p_i}}(t_0) = 0$, which is a contradiction.
	%Moreover, since $u_{i_j}$ is in the cycle $H_i$ of positive length $p_i$, we have $$y_{i_{(j - 1) \bmod p_i}}(t_0 + lp_i) = 0, \quad \forall l > 0.$$
	%But then, from Corollary~\ref{lem3:thm1} and the fact that $y(t_0)$ is in a periodic orbit of system $H$, we obtain
	%$$
	%y_{i_{(j - 1) \bmod p_i}}(t_0 + lN_G) = 1, \quad \forall l \ge 0.
	%$$
	%with $N_G$ a multiple of $p_i$, which is a contradiction.  
\end{proof}

To proceed, we define a map $\rho: \F_2^{|G|} \to \F_2^{|G|}$ as follows: For a given state $x$ and an equivalence class $[v_{i_j}]$, we let 
$$\rho(x)_{[v_{i_j}]}:= \prod_{v_k \in [v_{i_j}]} x_{k} {\bf 1}.$$  
We then recursively define a sequence $\xi_t\in \F_2^{|G|}$, for $t \ge 0$, as follows: For the base case $t = 0$, we let $\xi_0:= \rho(x(0))$. For the inductive step, we let $\xi_{t+1} := \rho(f(\xi_t))$, i.e., we first let the dynamics of system $G$ proceed one time step, with $\xi_t$ the current state, to obtain $f(\xi_t)$, and then apply the map $\rho$ to obtain $\xi_{t + 1}$.  

\begin{lemma}\label{lem5:thm1}
	The following hold for the sequence $\{\xi_t\}_{t \ge 0}$: 
	\begin{enumerate}
		\item	For any $t \ge 0$ and any equivalence class $[v_{i_j}]$, $\xi_{t, [v_{i_j}]} = y_{i_j}(t) {\bf 1}$. 
		\item For a given but arbitrary $s \ge 0$, we let $x'(s):= \xi_s$. Then, there exists a time step $N'_s \ge s$ such that $x(t) = x'(t)$ for any $t \ge N'_s$, i.e., the two trajectories $x(t)$ and $x'(t)$, for $t \ge s$, will eventually be the same.  
	\end{enumerate}\,
\end{lemma}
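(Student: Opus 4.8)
The first item will be proved by induction on $t$, tracking the reduced-system state $y(t)$ alongside the sequence $\{\xi_t\}$. For the base case $t=0$, the claim $\xi_{0,[v_{i_j}]} = y_{i_j}(0){\bf 1}$ is exactly the definition $\xi_0 = \rho(x(0))$ combined with the definition~\eqref{eq:initialcondition} of the induced initial condition $y(0)$. For the inductive step, I assume $\xi_{t,[v_{i_j}]} = y_{i_j}(t){\bf 1}$ for every equivalence class and compute $\xi_{t+1} = \rho(f(\xi_t))$. The point is that $\rho$ forces the vertices in $[v_{i_j}]$ to agree, so I only need to evaluate $\rho(f(\xi_t))_{[v_{i_j}]}$, which by definition is $\prod_{v_k\in[v_{i_j}]}f(\xi_t)_k\,{\bf 1} = \prod_{v_k\in[v_{i_j}]}\prod_{v_l\in{\mathcal N}_{\rm in}(v_k)}(\xi_t)_l\,{\bf 1}$. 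Since $\bigcup_{v_k\in[v_{i_j}]}{\mathcal N}_{\rm in}(v_k) = {\mathcal N}_{\rm in}([v_{i_j}])$, and by Lemma~\ref{lem:partition} applied inside each strong component together with the construction of $H$ this union is exactly $\bigcup_{u_{i'_{j'}}\in{\mathcal N}_{\rm in}(u_{i_j})}[v_{i'_{j'}}]$, the product collapses (using the induction hypothesis that each such block is constant with value $y_{i'_{j'}}(t)$) to $\prod_{u_{i'_{j'}}\in{\mathcal N}_{\rm in}(u_{i_j})}y_{i'_{j'}}(t) = y_{i_j}(t+1)$. This is essentially a re-run of the inductive step in the proof of Lemma~\ref{lem1:thm1}, now with $\rho$ applied at every step to guarantee the within-class equality that Lemma~\ref{lem1:thm1} had to establish separately.

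For the second item, fix $s\ge0$ and set $x'(s) := \xi_s$, with $x'(t) := f^{t-s}(x'(s))$ for $t\ge s$. The key comparison is that $x'(s) = \rho(x(s))$ in the sense that $x'_{[v_{i_j}]}(s) = \bigl(\prod_{v_k\in[v_{i_j}]}x_k(s)\bigr){\bf 1}$; I would verify this by an auxiliary induction showing $\xi_s$ and $\rho(x(s))$ agree, or more directly observe that $\xi_s$ is obtained from $x(0)$ by alternately flowing and applying $\rho$, and $\rho$ is idempotent and only decreases entries. The essential inequalities are: (i) $x'(s) \le x(s)$ entrywise (applying $\rho$ can only turn $1$'s into $0$'s within a block), hence by monotonicity of the CBN dynamics $x'(t)\le x(t)$ for all $t\ge s$; and (ii) a reverse inequality that eventually becomes equality. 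For (ii), the argument is that the vertices in a block $[v_{i_j}]$ of $G$ all eventually synchronize: using the monomial formula $x_k(t) = \prod_{v_l\in{\mathcal N}_{\rm in}^{t-s}(v_k)}x_l(s)$, once $t-s$ is large enough that ${\mathcal N}_{\rm in}^{t-s}$ saturates (within each strong component, after $t-s$ a multiple of $p_i$ exceeding the diameter-type bound from Lemma~\ref{lem:periodicorbit}), the value $x_k(t)$ depends only on which blocks are "all $1$" — and the same holds for $x'_k(t)$, so the two coincide. I would package this by invoking Lemma~\ref{lem:periodicorbit} on each strong component $G_i$ to get a time $N_i$ after which $x_{V_i}$ is blockwise-constant, take $N'_s$ to be $s$ plus the maximum (suitably padded) over $i$, and then note that at time $N'_s$ both $x(N'_s)$ and $x'(N'_s)$ are blockwise constant with the same block values (both equal to $\prod_{v_k\in[v_{i_j}]}x_k(s)$ within $[v_{i_j}]$, propagated forward), so $x(N'_s) = x'(N'_s)$ and hence the trajectories coincide thereafter.

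I expect the main obstacle to be item~2, specifically pinning down that $x(N'_s)$ and $x'(N'_s)$ have the \emph{same} blockwise values rather than merely both being blockwise constant. The subtlety is that $x'(s)$ is the "rounded down" version of $x(s)$, so a priori $x'$ could lose information that $x$ retains; the resolution is that the lost information — the within-block disagreements in $x(s)$ — is precisely what Lemma~\ref{lem:periodicorbit} tells us gets washed out anyway after $N_i$ steps inside $G_i$, because $x_{[v_{i_j}]}(N_i)$ becomes $\bigl(\prod_{v_k\in[v_{i_j}]}x_k(s)\bigr){\bf 1}$, which is exactly $x'_{[v_{i_j}]}(s)$ before any flow. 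So the content is to show the "flow then round" and "round then flow" operations agree in the limit, which follows from Lemma~\ref{lem:periodicorbit} applied inside each $G_i$ combined with the cross-component edges only ever reading off block-constant data once the source components have stabilized — here one processes the components in the order given by the stratification $\cS^0(G),\cS^1(G),\ldots$, so that when component $G_i\in\cS^l(G)$ stabilizes, all its predecessors have already stabilized and feed it consistent block-constant inputs.
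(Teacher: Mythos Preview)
Your plan for item~1 is correct and is exactly the argument the paper has in mind (it omits the details, noting the similarity to the proof of Lemma~\ref{lem1:thm1}).

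For item~2 there is a genuine gap. Your ``key comparison'' $\xi_s = \rho(x(s))$ is \emph{false} for $s>0$: in general $\xi_s \le \rho(x(s))$ with strict inequality possible. A small counterexample: take $G_1$ strongly connected with loop number~$2$ and four vertices, two per class, take $G_2=\{b\}$ with a self-loop, and add a single edge from one vertex of $[v_{1_0}]$ to~$b$. Choose $x(0)$ so that $[v_{1_0}]$ contains a~$0$ and a~$1$, $[v_{1_1}]$ is all~$1$, and $x_b(0)=1$. Then $(\xi_0)_{[v_{1_0}]}={\bf 0}$, so in $H$ the vertex $u_{2_0}$ (whose in-neighbors include $u_{1_0}$) gets $y_{2_0}(1)=0$, giving $(\xi_1)_b=0$; but $x_b(1)=1$ since its actual in-neighbor in $[v_{1_0}]$ held value~$1$, hence $\rho(x(1))_b=1\neq (\xi_1)_b$. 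Because of this, the rest of your argument collapses: the block values of $x'(s)=\xi_s$ and of $x(s)$ genuinely differ (not merely within-block disagreements), so ``both $x(N'_s)$ and $x'(N'_s)$ are blockwise constant with the same block values $\prod_{v_k\in[v_{i_j}]}x_k(s)$'' is unjustified --- $x'$ starts from smaller block values and you have no mechanism forcing $x$ to catch down.

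The paper avoids this by \emph{inducting on~$s$} rather than treating $s$ directly. The base case $s=0$ is the one place where $\xi_0=\rho(x(0))$ holds by definition; the paper proves that case by choosing $N'_0$ so that both $x(N'_0),x'(N'_0)$ are in periodic orbits and then, for any vertex where $x'_i(N'_0)=0<x_i(N'_0)=1$, tracing back to some $v_a$ with $x_a(0)=1$, $x'_a(0)=0$, hence $x_{a'}(0)=0$ for some $v_{a'}\in[v_a]$, and using the spread of that~$0$ through $[v_a]$ (via the argument of Lemma~\ref{lem1:thm1}) plus periodicity of~$x$ to derive a contradiction. For the inductive step, observe that $\xi_{s+1}=\rho(f(\xi_s))=\rho(x'(s+1))$ where $x'$ is the trajectory from $\xi_s$; so the base-case argument applied to the pair $\bigl(x'(s+1),\,\xi_{s+1}\bigr)$ gives a time after which they coincide, and combining with the induction hypothesis (that $x$ and $x'$ eventually coincide) finishes the step. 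Your stratification idea over $\cS^0(G),\cS^1(G),\ldots$ is plausible for the base case $s=0$ alone, but as written it presupposes what Theorem~\ref{pro:dynamics} is meant to establish and does not handle $s>0$.
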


%\begin{lemma}\label{lem6:thm1}	
%\end{lemma}

\begin{proof}
	The first item can be established by induction on~$t\ge 0$. 
	We omit the proof since it is similar to the proof of Lemma~\ref{lem1:thm1}. We prove below the second item. The proof is carried out by induction on $s \ge 0$. 
	
	{\em Base case: $s = 0$.} 
	We choose $N'_0$ sufficiently large so that both $x(N'_0)$ and $x'(N'_0)$ are in periodic orbits of periods $T$ and $T'$, respectively. Now, for each $v_i$, we write 
	$$
	\left\{
	\begin{array}{l}
	x_i(N'_0) = \prod_{v_j\in {\mathcal N}_{\rm in}^{N'_0}(v_i)}x_j(0), \\
	x_i'(N'_0) = \prod_{v_j\in {\mathcal N}_{\rm in}^{N'_0}(v_i)} x'_j(0).
	\end{array}
	\right. 
	$$ %where we recall that ${\mathcal N}_{\rm in}^{N'_0}(v_i)$ is the collection of any vertex $v_j$ such that there is a walk from $v_j$ to $v_i$ of length~$N'_0$. 
	Since $x(0) \ge x'(0)$ (entry-wise), we obtain $x_i(N'_0) \ge x'_i(N'_0)$. We now show that $x_i(N'_0) = x'_i(N'_0)$. It suffices to show that if $x'_i(N'_0) = 0$, then $x_i(N'_0) = 0$.    
	
	Suppose to the contrary that $x'_i(N'_0) = 0$ but $x_i(N'_0) = 1$; then, there must be a vertex $v_a\in {\mathcal N}_{\rm in}^{N'_0}(v_i)$ such that $x'_a(0) = 0$ and $x_a(0) = 1$. By the construction of $x'(0)$, there is a vertex $v_{a'}\in [v_a]$ such that $x_{a'}(0) = 0$. Without loss of generality, we assume that $v_a$ is a vertex of $G_k$. Since $[v_a]$ contains at least two vertices $v_a$ and $v_{a'}$, the loop number $p_k$ of $G_k$ is positive.  Appealing to the proof of Lemma~\ref{lem1:thm1}, we obtain a time step $lp_k$ with $l> 0$ such that $x_{[v_a]}(l'p_k) = 0$ for all $l' \ge l$. 
	
	Now, let $N''_0 := N'_0 + lp_kT$.  %and moreover, $(N''_0- N'_0)$ is a common multiple of $p_k$ and $T$. 
	For ease of notation, we let $\Delta:= N''_0 - lp_k$, and write 
	$$x_i(N''_0) = \prod_{v_j\in {\mathcal N}_{\rm in}^{\Delta}(v_i)} x_j(lp_k).$$ 
	Since $(N''_0 - N'_0)$ is a multiple of the period~$T$, we have $x_i(N''_0) = x_i(N'_0) = 1$. On the other hand, since $(\Delta - N'_0) = lp_k(T - 1)$ as a multiple of $p_k$, the set ${\mathcal N}_{\rm in}^{\Delta}(v_i)$ intersects the equivalence class $[v_a]$; indeed, since $v_a\in {\mathcal N}^{N'_0}_{\rm in}(v_i)$ and $G_k$ is strongly connected of positive loop number $p_k$, ${\mathcal N}^{N'_0 + l'p_k}_{\rm in}(v_i)$ intersects $[v_a]$ for any $l' \ge 0$. But then, since $x_{[v_a]}(lp_k) = {\bf 0}$, we have $x_i(N''_0) = 0$, which is a contradiction. 
	
	{\em Inductive step.} We assume that the lemma holds for $s \ge 0$, and prove for $(s + 1)$. %To avoid confusion, we let $\bar x(s):= \xi_s$. 
	From the induction hypothesis, there exists an integer $N'_s \ge s$ such that $x(t) =  x'(t)$ for any $t \ge N'_s$. Now, let $x''(s + 1):= \xi_{s + 1}$. Then, one can treat $x'(s + 1)$ and $x''(s + 1)$ as two ``initial'' conditions of system $G$ (compared with $x(0)$ and $x'(0)$ in the base case), and obtain a time step $N'_{s + 1} \ge (s + 1)$ such that $x'(t) = x''(t)$ for any $t \ge N'_{s + 1}$. Without loss of generality, we can assume that $N'_{s + 1} \ge N'_s$. Then,  
	$
	x(t) = x'(t) = x''(t)
	$ for any $t \ge N'_{s + 1}$. 
\end{proof}

With the lemmas above, we are now in a position to prove Theorem~\ref{pro:dynamics}:

\begin{proof}[Proof of Theorem~\ref{pro:dynamics}]
	We choose $s\ge 0$ such that $y(s)$ is in a periodic orbit of the reduced system $H$. Then, from Lemma~\ref{lem4:thm1} and the first item of Lemma~\ref{lem5:thm1}, we have that $\xi_s$ is in a periodic orbit of system $G$. Moreover, from Lemma~\ref{lem1:thm1}, if we let $x'(s):= \xi_s$, then for any $t \ge s$ and any equivalence class $[v_{i_j}]$, 
	$x'_{[v_{i_j}]}(t)  = y_{i_j}(t) {\bf 1}$. 
	Further, from the second item of Lemma~\ref{lem5:thm1}, there exists a time step $N'_s\ge s$  such that for any $t \ge N'_s$ and any equivalence class $[v_{i_j}]$,  $x_{[v_{i_j}]}(t) = x'_{[v_{i_j}]}(t)$. The proof is then complete by setting $N:= N'_s$. 
\end{proof}

\section{Asymptotic behavior of a reduced system} The section deals with the asymptotic behavior of a reduced system, and is divided into several parts. In Subsection~\ref{ssec:elem}, we introduce a special class of CBNs which are defined over a simple class of digraphs, termed elementary digraphs (Definition~\ref{def:elemdigraph}), and characterize their asymptotic behaviors. We show in Subsections~\ref{ssec:simpleunion} and~\ref{ssec:general}, how these CBNs serve as the building blocks which lead to a complete characterization of the asymptotic behavior of a general CBN. The main result is stated in Theorem~\ref{maintheorem}, and the proof is given in Subsection~\ref{ssec:proofthm2}.

\subsection{Elementary digraphs}\label{ssec:elem}
%We investigate in this subsection the asymptotic behaviors of a few special classes of reduced systems. %where the dependency graph $H^\star$ is relatively simple 
%To proceed, we first introduce the following definition:
We first have the following definition: 

\begin{definition}[Elementary digraph]\label{def:elemdigraph}
	A weakly connected digraph~$J$ is {\bf elementary} if it is comprised of two strongly connected components~$J_-$ and $J_+$, each of which is a cycle of positive length. Moreover, there exists only one edge from a vertex of $J_-$ to a vertex of $J_+$.    
\end{definition}

%We note here that there does not exist any edge from a vertex of $J_+$ to a vertex of $J_-$ because otherwise~$J$ itself is strongly connected. We also note that an elementary digraph  is regular. 

\begin{figure}[ht]
	\centering
	\includegraphics[height=50mm]{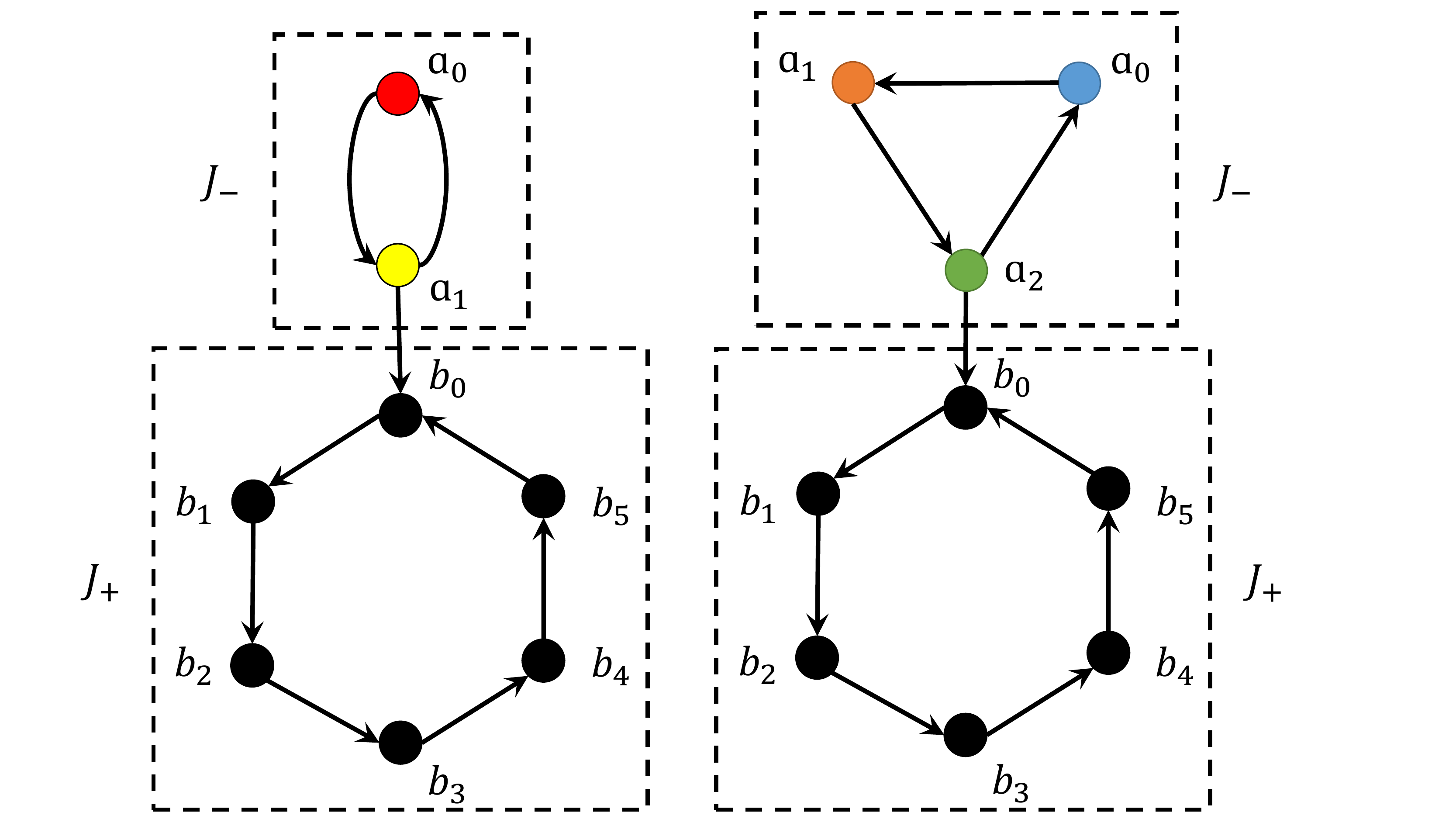}
	\caption{We present here two elementary digraphs. The left (right) digraph has two cycles, with lengths $2$ ($3$) and $6$ ($6$), respectively. The two cycles are connected by a single edge. The vertices have been labeled in a way so that $a_{p_- - 1} b_0$ is the edge from $J_-$ to $J_+$.} 
	\label{elementary}
\end{figure}

We refer the reader to Fig.~\ref{elementary} for an illustration of an elementary digraph. %We note that an elementary digraph is reduced. 
From the definition, an elementary digraph~$J$ can be described by a triplet $$J = (J_-, J_+, e),$$ 
where $e$ is the unique edge from $J_-$ to $J_+$. We now fix the elementary digraph~$J$. Let $J_-$ and $J_+$ be comprised of~$p_-$ and~$p_+$ vertices, respectively.
We then write $J_-=(V_-, E_-)$ and $J_+ = (V_+, E_+)$, with 
$$
\left\{
\begin{array}{l}
V_- = \{a_0,\ldots, a_{p_- - 1}\} \\
E_- = \{a_i a_{(i + 1) \bmod p_-} \mid i = 0,\ldots,p_ - - 1\},
\end{array}   
\right.
$$
and similarly, 
$$
\left\{
\begin{array}{l}
V_+ = \{b_0,\ldots, b_{p_+ -1}\} \\
E_+ = \{b_i b_{(i + 1) \bmod p_+} \mid i = 0,\ldots, p_+ -1\}.
\end{array}
\right.
$$
By relabeling the vertices if necessary, we assume that $e = a_{p_- -1}  b_0$ is the edge from $J_-$ to $J_+$. The way we label the vertices of $J$ is rather to facilitate the definition of the map $\omega$ which will be introduced shortly. 
%We denote by $l(r)$ the length of the path~$r$.

We next consider a CBN whose dependency graph is~$J$. We describe below the asymptotic behavior of the system. Let $x\in \F_2^{|J|}$ be an arbitrary initial state of the system~$J$. With a slight abuse of notation, we let $$x_-:= x_{V_-},\quad and \quad x_+ := x_{V_+},$$ and let $x_{-,i}$ (resp. $x_{+,i}$) be the value of $x$ on vertex $a_i$ (resp. $b_i$).        
We now introduce a map $\omega:\F_2^{|J|} \to \F_2^{|J|}$, which assigns $x$ to a particular state $\omega(x)$ in the periodic orbit which the system~$J$ will enter with $x$ being the initial condition (a precise statement will be given in Lemma~\ref{lem:mapomega} below).  
%For a given initial condition~$x$ of the CBN, we denote by $\cO_J(x)$ the unique periodic orbit the trajectory will enter (with $x$ the initial condition).  We then define a map $\omega$, sending~$x$ to a state $\omega(x)\in \cO_J(x)$, as follows: First, with a slight abuse of notation, we let $x_-:= x_{V_-}$ and $x_+ := x_{V_+}$.  

{\em Definition of $\omega$.} 
Let $\omega_-(x)$ (resp. $\omega_+(x)$) be the restriction of $\omega(x)$ to $V_-$ (resp. $V_+$). %Hence, to define $\omega(x)$, it will suffice to define 
For $\omega_-(x)$, we simply let $$\omega_-(x):= x_-.$$ 
For $\omega_+(x)$, %we first denote by $\omega_{-,i}(x)$ (resp. $\omega_{+,i}(x)$) the value of $\omega(x)$ on vertex $a_i$ (resp. $b_i$). 
we first let $N_J:= {\rm lcm}\{p_-,p_+\}$, i.e., the least common multiple of $p_-$ and $p_+$. We then set
$$
\omega_{+,i}(x) := x_{+,i} \prod^{N_J/p_+}_{j = 1}  x_{-,(i-jp_+) \bmod p_-},\ \forall i = 0,\ldots, p_+ - 1.
$$
The above definition of $\omega_+$ makes use of the choice we label the vertices of~$J$ in which $e = a_{p_- - 1} b_0$ is the edge from $J_-$ to $J_+$.
%where $\omega_{+,i}(x)$ is the value of $\omega(x)$ on vertex $a_i$ 
We illustrate~$\omega$ in Fig.~\ref{loweromega}.

\begin{figure}[ht]
	\centering
	\vspace{-0.3cm}
	\includegraphics[height=50mm]{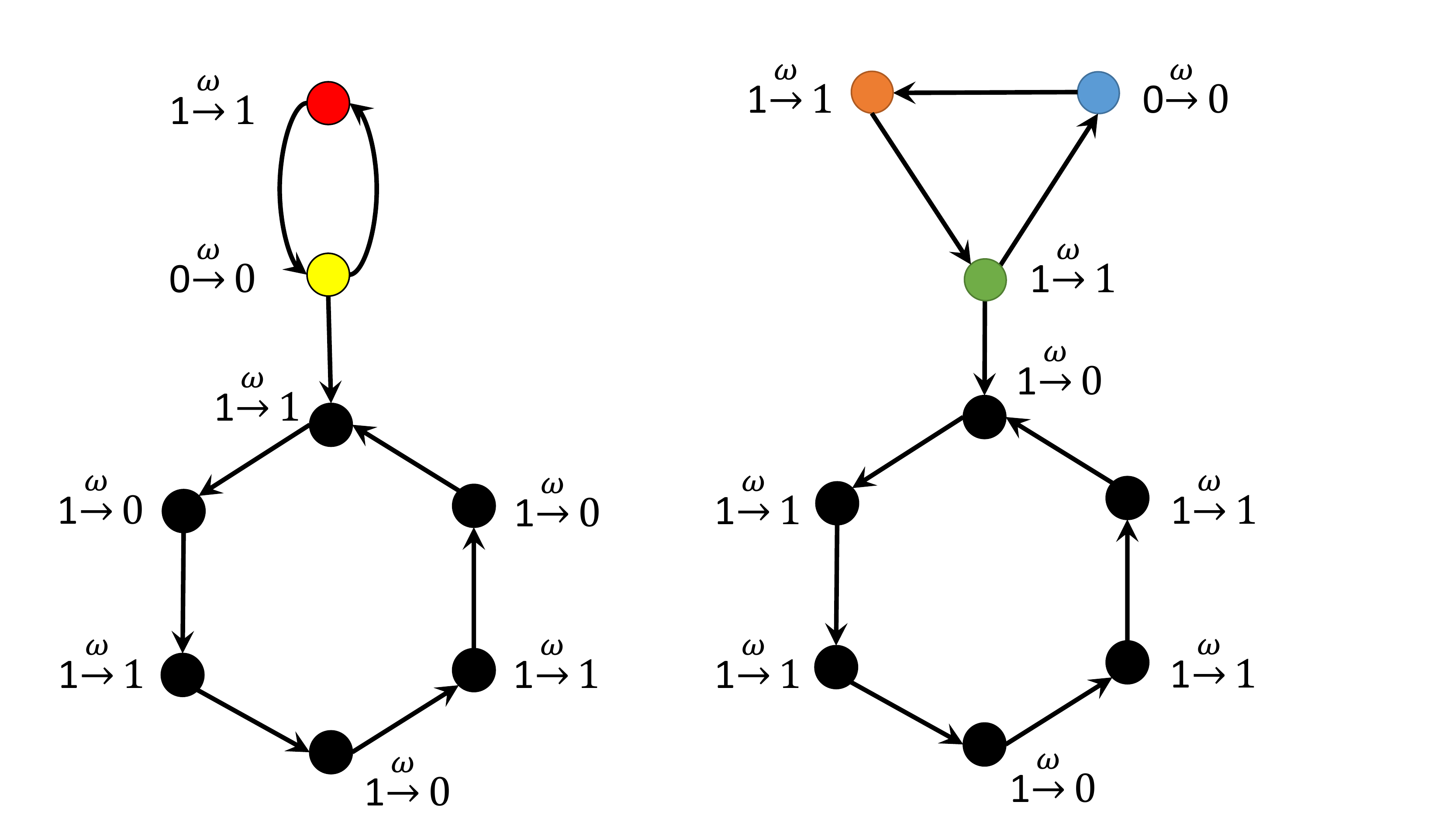}
	\caption{We illustrate the map $\omega$ on the elementary graphs shown in Fig.~\ref{elementary}. The $0/1$'s on the left of the arrows are initial conditions, and the $0/1$'s on the right are the images under the map~$\omega$.}
	\label{loweromega}
\end{figure}

Note that from the definition of $\omega$, we have $\omega(x) \le x$ (entry-wise) for any $x\in \F^{|J|}$. With the map $\omega$ defined above, we have the following result: 

\begin{lemma}\label{lem:mapomega}
	%Let $p_-$ and $p_+$ be lengths of cycles $J_-$ and $J_+$, respectively, and let $N_J = {\rm lcm}(p_-,p+)$. 
	For any initial condition $x(0)$ and any $l > 0$, $x(l N_J) = \omega(x(0))$. %and moreover, $x(N_j)$ is in a periodic orbit. 
\end{lemma}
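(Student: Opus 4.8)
The plan is to analyze the dynamics of the CBN on the elementary digraph $J$ directly, tracking how the value of each vertex at time $lN_J$ depends on the initial condition $x(0)$, and showing it coincides with $\omega(x(0))$. First I would observe that for a vertex $a_i$ in the cycle $J_-$, since $J_-$ is a strongly connected component with no incoming edges from outside, the value $x_{-,i}(t)$ evolves independently: $a_i$ has the single in-neighbor $a_{(i-1)\bmod p_-}$, so $x_{-,i}(t) = x_{-,(i-t)\bmod p_-}(0)$. In particular, since $p_- \mid N_J$, we get $x_{-,i}(lN_J) = x_{-,i}(0)$, which matches $\omega_-(x)=x_-$. This handles the $J_-$ part.

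The substance is the $J_+$ part. A vertex $b_i$ in $J_+$ has two cases: for $i\neq 0$, its only in-neighbor is $b_{(i-1)\bmod p_+}$; for $i=0$, its in-neighbors are $b_{p_+-1}$ and $a_{p_--1}$ (the head of the unique cross edge $e$). Thus $x_{+,0}(t+1) = x_{+,p_+-1}(t)\cdot x_{-,p_--1}(t)$ while $x_{+,i}(t+1)=x_{+,(i-1)\bmod p_+}(t)$ otherwise. Unrolling this recursion, the value $x_{+,i}(t)$ equals the product of $x_{+,(i-t)\bmod p_+}(0)$ together with one factor $x_{-,p_--1}(\cdot)$ for each time the index wraps past $0$ in the $p_+$-cycle during the $t$ steps; and each such factor $x_{-,p_--1}(\text{some time } \tau)$ equals $x_{-,(p_--1-\tau)\bmod p_-}(0)$ by the $J_-$ analysis. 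The key step is to carefully count these wraparounds when $t=lN_J$ and identify the resulting product of $a$-coordinates. The plan is to show that over $t=lN_J$ steps the index traverses the $p_+$-cycle exactly $lN_J/p_+$ times, and the $j$-th wraparound (counting backward) contributes the factor $x_{-,(i-jp_+)\bmod p_-}(0)$, for $j=1,\ldots,lN_J/p_+$; after removing duplicate factors (using idempotence $x^2=x$ in $\F_2$), exactly the $N_J/p_+$ distinct residues $\{(i-jp_+)\bmod p_- : j=1,\ldots,N_J/p_+\}$ survive, since after $N_J/p_+$ steps the residue $(i-jp_+)\bmod p_-$ repeats with period $N_J/p_+$ in $j$ (because $N_J = \mathrm{lcm}(p_-,p_+)$ means $(N_J/p_+)\cdot p_+ \equiv 0 \pmod{p_-}$). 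This yields exactly $x_{+,i}(lN_J) = x_{+,i}(0)\prod_{j=1}^{N_J/p_+} x_{-,(i-jp_+)\bmod p_-}(0) = \omega_{+,i}(x(0))$.

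The main obstacle I expect is the bookkeeping in this index-chasing argument: correctly aligning the backward time-indexing of the cross-edge contributions with the forward labeling convention $e = a_{p_--1}b_0$, and verifying that the multiset of $a$-indices collapses to precisely the set appearing in the definition of $\omega_+$ (neither more nor fewer residues). A cleaner alternative I would consider is a two-stage induction: first prove the lemma for $l=1$ by the explicit unrolling above, then show $\omega$ is idempotent and $f^{N_J}$-invariant on its image, i.e. $\omega(f(x)) $ lies in the same periodic orbit and $f^{N_J}(\omega(x)) = \omega(x)$, so that $x(lN_J) = f^{(l-1)N_J}(x(N_J)) = f^{(l-1)N_J}(\omega(x(0))) = \omega(x(0))$; this reduces the general case to $l=1$ and isolates the combinatorial work into one clean computation. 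Either way, once $x_{+,i}(lN_J)=\omega_{+,i}(x(0))$ and $x_{-,i}(lN_J)=x_{-,i}(0)$ are established, the lemma follows immediately.
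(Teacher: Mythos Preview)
Your proposal is correct and follows essentially the same approach as the paper: both arguments trace the dependence of $x_{+,i}(lN_J)$ back to the initial condition and identify the contributing factors as $x_{+,i}(0)$ together with the $a$-values indexed by $(i-jp_+)\bmod p_-$. The only packaging difference is that the paper works with the in-neighbor sets ${\mathcal N}^{jp_+}_{\rm in}(b_i)=\{b_i\}\sqcup\{a_{(i-j'p_+)\bmod p_-}\mid j'=1,\ldots,j\}$ directly (so duplicates never arise and the equality ${\mathcal N}^{N_J}_{\rm in}(b_i)={\mathcal N}^{lN_J}_{\rm in}(b_i)$ handles all $l>0$ at once), whereas you unroll the recursion step-by-step and invoke idempotence to collapse repeated residues; the combinatorial content is identical.
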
 

\begin{proof}
	%It suffices to show that $\omega(x(0)) = x(lN_J)$ for any $l > 0$. 
	First, note that $J_-$ is a cycle, and there is no path from a vertex of $J_+$ to a vertex of $J_-$. Thus, for any $l \ge 0$, 
	$x_-(lp_-) = x_-(0) = \omega_-(x(0))$. Since $N_J$ is a multiple of $p_-$, we have $x_-(lN_J) = \omega_-(x(0))$ for any $l \ge 0$. 
	
	We now show that $ x_+(lN_J) = \omega_+(x(0))$ for any $l > 0$.  
	Fix a vertex $b_i$ in $J_+$, we consider the sets 
	${\mathcal N}^{j p_+}_{\rm in}(b_i)$ for $j\ge 1$. 
	From the definition of an elementary digraph and the way the vertices are labeled, we have
	$${\mathcal N}^{j p_+}_{\rm in}(b_i) = \{b_i\} \sqcup \{a_{(i - j'p_+) \bmod p_-} \mid j' = 1,\ldots,j\}.$$ 
	Note that $N_J$ is a multiple of $p_+$. It follows from the above expression that 
	$$
	x_{+,i}(N_J) = \prod_{v_k\in {\mathcal N}^{N_J}_{\rm in}(b_i)} x_k(0) = \omega_{+,i}(x(0)).  
	$$
	Since $N_J$ is also a multiple of $p_-$, for any integer $i$, we have $i \equiv (i- N_J) \bmod p_-$. Thus,   
	${\mathcal N}^{N_J}_{\rm in}(b_i) = {\mathcal N}^{lN_J}_{\rm in}(b_i)$ 
	for any $l > 0$, and hence  
	\begin{align*}
	x_{+,i}(N_J) &= \prod_{v_k\in {\mathcal N}^{N_J}_{\rm in}(b_i)} x_k(0) \\
	&= \prod_{v_k\in {\mathcal N}^{l N_J}_{\rm in}(b_i)} x_k(0) = x_{+,i}(l N_J).
	\end{align*}
	This completes the proof. 
\end{proof}

%Since $J_+$ is a cycle, a state $x_+$ on $J_+$ can be identified with a binary necklace of length $p_+$: $x_{+,0} \ldots x_{+, p_+ -1}$ (taking rotations as equivalent).  One can thus define the order of $x_+$ to be the order of the corresponding necklace.  
%From the second item of Lemma~\ref{lem:mapomega}, we see that after the time step~$T_J$, the dynamics~$x(t)$ proceeds as if~$J$ was comprised of two disjoint cycles~$J_-$ and $J_+$ (without the connecting edge~$e$). Moreover, the period of $x_+(t)$ is exactly the order of $\omega_+(x(0))$.  

\subsection{Union of elementary digraphs}\label{ssec:simpleunion}
We consider a CBN $K$ whose dependency graph can be obtained by patching together two elementary digraphs. Specifically, the digraph $K$ satisfies either of the two conditions:  
\begin{enumerate}
	\item[\it i).] The digraph~$K$ is comprised of {\em two} strongly connected components $K_-$ and $K_+$, each of which is a cycle of positive length. Moreover, there exist {\em two} (distinct) edges $e'$ and $e''$ from $K_-$ to $K_+$.
	\item[\it ii).] The digraph~$K$ is comprised of {\em three} strongly connected components $K'_{-}$, $K''_{-}$ and $K_{+}$, each of which is a cycle of positive length. Moreover, there exists {\em an} edge $e'$ from $K'_-$ to $K_+$ and another edge $e''$ from $K''_-$ to $K_+$.
\end{enumerate}
Note that in either case,  the digraph~$K$ can be expressed as a union of two elementary digraphs: In the first case, $K$ is the union of $ J' := (K_-,K_+,e')$ and $J'' := (K_-,K_+,e'')$; in the second case, $K$ is the union of $J' := (K'_-,K_+, e')$ and $J'' := (K''_-,K_+, e'')$. %For ease of notation, we will simply write $K = J' \circ J''$ for either of the two cases.   

We now describe the asymptotic behavior of the CBN~$K$. Similarly, we assign each initial condition $x$ to a particular state in the corresponding periodic orbit. %the system will enter.  %To proceed, we first 

First, for a given state $x\in \F^{|K|}_2$, we let $x'_-$, $x''_-$ and $x_+$ be defined by restricting $x$ to $K'_-$, $K''_-$ and $K_+$, respectively. In the case $K$ is of type~{\em i)}, $x'_-$ and $x''_-$ refer to the same state obtained by restricting $x$ to~$K_-$.   
Let $x' \in \F_2^{|J'|}$ and $x''\in \F_2^{|J''|}$ be defined by restricting $x$ to the elementary subgraphs $J'$ and $J''$, respectively. 
We can then apply the map $\omega$ to both $x'$ and $x''$, and obtain two states $\omega(x')$ and $\omega(x'')$. Note, in particular, that both $\omega_+(x')$ and $\omega_+(x'')$ are states of the cycle~$K_+$. 

Next, we introduce an operation~``$\circ$'', which sends the pair $(\omega_+(x'), \omega_+(x''))$ to another state on $K_+$, denoted by $\omega_+(x') \circ \omega_+(x'')$. The definition of $\circ$ is simply the Hadamard product, i.e., entry-wise multiplication. Specifically, given a vertex $b_i$ of the cycle $K_+$, we let $\omega_{+,i}(x')$ (resp. $\omega_{+,i}(x'')$) be the value of $\omega_+(x')$ (resp. $\omega_+(x'')$) on~$b_i$. Then the value of $\omega_+(x') \circ \omega_+(x'')$ on $b_i$ is  given by
$$
\left ( 
\omega_+(x') \circ \omega_+(x'') 
\right )_i := \omega_{+,i}(x') \omega_{+,i}(x'')
$$
%Now, with a slight abuse of notation, we let $\omega(x') \circ \omega(x'')$ be a state for the CBN~$K$ defined as follows:  
We now have the following fact for the CBN K:

\begin{lemma}\label{lem:operationvee}
	Let $p'_-$, $p''_-$ and $p_+$ be the lengths of cycles $J'_-$, $J''_-$ and $J_+$, respectively. Let $N_K:= {\rm lcm}\{p'_-,p''_-,p_+\}$. 
	For any initial condition $x(0)$ and any $l > 0$, 
	\begin{equation}\label{eq:definestateK1}
	\left\{
	\begin{array}{l}
	x'_-(l N_K) = x'_-(0), \\
	x''_-(l N_K) = x''_-(0), \\
	x_+(l N_K) = \omega_+(x'(0)) \circ  \omega_+(x''(0)).
	\end{array}
	\right. 
	\end{equation}\,	
\end{lemma}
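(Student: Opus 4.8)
The plan is to reduce the statement to two applications of Lemma~\ref{lem:mapomega}, one for each elementary subgraph $J'$ and $J''$, by tracking the in-neighbor sets of vertices of $K$ under iterated preimages. The first two equations of~\eqref{eq:definestateK1} are immediate: since $K'_-$ (resp. $K''_-$) is a cycle and no edge points into it from the rest of $K$, the dynamics on $K'_-$ (resp. $K''_-$) is a pure cyclic shift of period $p'_-$ (resp. $p''_-$), and $N_K$ is a multiple of both, so $x'_-(lN_K)=x'_-(0)$ and $x''_-(lN_K)=x''_-(0)$ for every $l>0$ (in fact $l\ge 0$). This is the same argument as the opening paragraph of the proof of Lemma~\ref{lem:mapomega}.

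The substance is the third equation. Fix a vertex $b_i$ of $K_+$. I would compute $x_{+,i}(lN_K)=\prod_{v_k\in {\mathcal N}^{lN_K}_{\rm in}(b_i)} x_k(0)$ directly by describing ${\mathcal N}^{m}_{\rm in}(b_i)$ inside $K$. Tracing back along edges: one contribution always stays inside the cycle $K_+$, contributing $b_{(i-m)\bmod p_+}$; and each time the backward walk is at $b_0$ it can also branch off through the edge $e'$ into $K'_-$ (and similarly, when $K$ is of type~$i)$, $e'$ branches at some $b_{i'}$ — here one must use the labelling convention and a suitable relabelling so that both edges are handled). Because inside $K'_-$ and $K''_-$ the walk is again a forced cyclic shift, the set ${\mathcal N}^m_{\rm in}(b_i)$ decomposes as $\{b_{(i-m)\bmod p_+}\}$ together with the vertices of $K'_-$ reached through $e'$ and the vertices of $K''_-$ reached through $e''$. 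When $m=N_K$ is a multiple of $p_+$, the $K_+$-part is just $\{b_i\}$, and the $K'_-$-part (resp. $K''_-$-part) is exactly the index set appearing in the definition of $\omega_{+,i}(x')$ (resp. $\omega_{+,i}(x'')$) — this is where I invoke the computation in the proof of Lemma~\ref{lem:mapomega} applied separately to $J'=(K'_-,K_+,e')$ and $J''=(K''_-,K_+,e'')$. Multiplying over this disjoint union and using $x_k(0)^2=x_k(0)$ to absorb the common factor $x_{+,i}$ gives $x_{+,i}(N_K)=\omega_{+,i}(x'(0))\,\omega_{+,i}(x''(0))=\left(\omega_+(x'(0))\circ\omega_+(x''(0))\right)_i$.

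Finally, to go from $l=1$ to arbitrary $l>0$, I would argue as in Lemma~\ref{lem:mapomega} that ${\mathcal N}^{N_K}_{\rm in}(b_i)={\mathcal N}^{lN_K}_{\rm in}(b_i)$: since $N_K$ is simultaneously a multiple of $p_+$, $p'_-$ and $p''_-$, each branch of the backward-reachable set is invariant modulo the relevant cycle length, so the set of source vertices (taken modulo each cycle) does not change when $m$ is increased by a further multiple of $N_K$. Hence $x_+(lN_K)=x_+(N_K)$ for all $l>0$, completing the proof.

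\textbf{Main obstacle.} The delicate point is the bookkeeping of ${\mathcal N}^{m}_{\rm in}(b_i)$ in the type-$i)$ case, where both edges $e'$ and $e''$ enter the \emph{same} cycle $K_+$ at possibly different vertices: one must relabel $K_+$ relative to each edge so that the formula for $\omega_+$ (which assumes the incoming edge is $a_{p_--1}b_0$) applies to each elementary subgraph, and then check that the two descriptions are compatible on the shared cycle $K_+$. Everything else is a routine repetition of the argument already carried out for a single elementary digraph in Lemma~\ref{lem:mapomega}.
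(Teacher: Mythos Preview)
Your proposal is correct and follows essentially the same route as the paper. The paper streamlines what you flag as the ``main obstacle'' by writing the single identity ${\mathcal N}^{lN_K}_{\rm in}(b_i) = {\mathcal N}^{lN_K}_{\rm in}(b_i; J') \cup {\mathcal N}^{lN_K}_{\rm in}(b_i; J'')$ (with intersection $\{b_i\}$, absorbed via $x_{+,i}^2=x_{+,i}$) and then invoking the computation from the proof of Lemma~\ref{lem:mapomega} on each factor; this handles the type-$i)$ and type-$ii)$ cases uniformly and sidesteps any explicit relabelling of $K_+$.
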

%$\omega_+(x')     ``$\circ$"
%Note that~\eqref{eq:definestateK} is well defined even if $K$ is of type~i)  because $x'_-(0) = x''_-(0)$ and  $\omega_-$ is the identity map.

\begin{proof}%[Proof of Lemma~\ref{lem:operationvee}] 
	%We show that~\eqref{eq:definestateK1} holds and $x(N_K) = x(lN_K)$ for any $l > 0$. 
	For $x'_-(t)$ and $x''_-(t)$, we have 
	$x'_-(l p'_-) = x'_-(0)$ and $x''_-(l p''_-) = x''_-(0)$ 
	for any $l \ge 0$. Since $N_K$ is a multiple of both $p'_-$ and $p''_-$, we have that for any $l \ge 0$, 
	$$
	x'_-(l N_K) = x'_-(0)\quad \mbox{and} \quad x''_-(l N_K) = x''_-(0).
	$$

	We now show that $x_+(l N_K) = \omega_+(x'(0)) \circ  \omega_+(x''(0))$ for any $l > 0$. First, note that for any vertex $b_i$ of $K_+$, we have 
	\begin{equation}\label{eq:decomposeNin}
	{\mathcal N}^{l N_K}_{\rm in}(b_i) = {\mathcal N}^{l N_K}_{\rm in}(b_i; J') \cup {\mathcal N}^{l N_K}_{\rm in}(b_i; J'').
	\end{equation}  
	%where the arguments $J'$ and $J''$ in the parentheses indicate that the in-neighbors are taken within the elementary subgraphs $J'$ and $J''$,  respectively. 
	The two subsets on the right hand side of~\eqref{eq:decomposeNin} are not disjoint; indeed, their intersection is the singleton $\{b_i\}$. 
	Since $x_{+,i}(t)\in \{0,1\}$, $x_{+,i}^2(t) = x_{+,i}(t)$. By~\eqref{eq:decomposeNin}, we factorize $x_{+,i}(l N_K)$ as follows:  
	\begin{align*}
	x_{+,i}(l N_K) &= \prod_{v_k\in {\mathcal N}^{l N_K}_{\rm in}(b_i)} x_k(0) \\
	&=  
	\prod_{v_{k'}\in {\mathcal N}^{l N_K}_{\rm in}(b_i; J')} x'_{k'}(0) \prod_{v_{k''}\in {\mathcal N}^{l N_K}_{\rm in}(b_i; J'')} x''_{k''}(0). 
	\end{align*}
	Since $N_K$ is a multiple of $p'_-$, $p''_-$, and $p_+$, we apply the same argument as in the proof of Lemma~\ref{lem:mapomega} and identify the two factors in the above expression as follows: 
	$$
	\left\{
	\begin{array}{l}
	\prod_{v_{k'}\in {\mathcal N}^{l N_K}_{\rm in}(b_i; J')} x'_{k'}(0) = \omega_{+,i}(x'(0)), \\
	\prod_{v_{k''}\in {\mathcal N}^{l N_K}_{\rm in}(b_i; J'')} x''_{k''}(0) = \omega_{+,i}(x''(0)),
	\end{array}
	\right.
	$$
	which holds for any $l > 0$. We thus obtain $$x_{+,i}(l N_k) = \omega_{+,i}(x'(0))\omega_{+,i}(x''(0)).$$ 
	This completes the proof.  
\end{proof}

{\em A mild generalization.} %We now generalize the way we obtain the digraph $K$. Specifically, 
We now consider any reduced weakly connected digraph $H$ that satisfies the following condition: Let ${\mathcal S}(H)$ be the collection of strongly connected components of $H$, which are all cycles of positive length. Further, decompose ${\mathcal S}(H) = \sqcup^L_{l = 0}{\mathcal S}^l(H)$, with ${\mathcal S}^{l}(H)$ the immediate successor of ${\mathcal S}^{l-1}(H)$. Then, $L = 1$, and moreover, ${\mathcal S}^1(H)$ is comprised of only one cycle. 

It should be clear that any elementary digraph $J$, or the digraph $K$, satisfies the condition above. It is also clear that any such digraph $H$ can be obtained by taking the union of several elementary digraphs which share the same ``lower'' cycle. We express the digraph $H$ as $H = \cup^m_{k = 1} J^{(k)}$,
where each $J^{(k)} = (H^{(k)}_-, H_+, e^{(k)})$ is an elementary digraph. 
An illustration of $H$ is in Fig.~\ref{mild}. %{\color{red} A figure is needed here.}

\begin{figure}[ht]
	\centering
	\includegraphics[height=49mm]{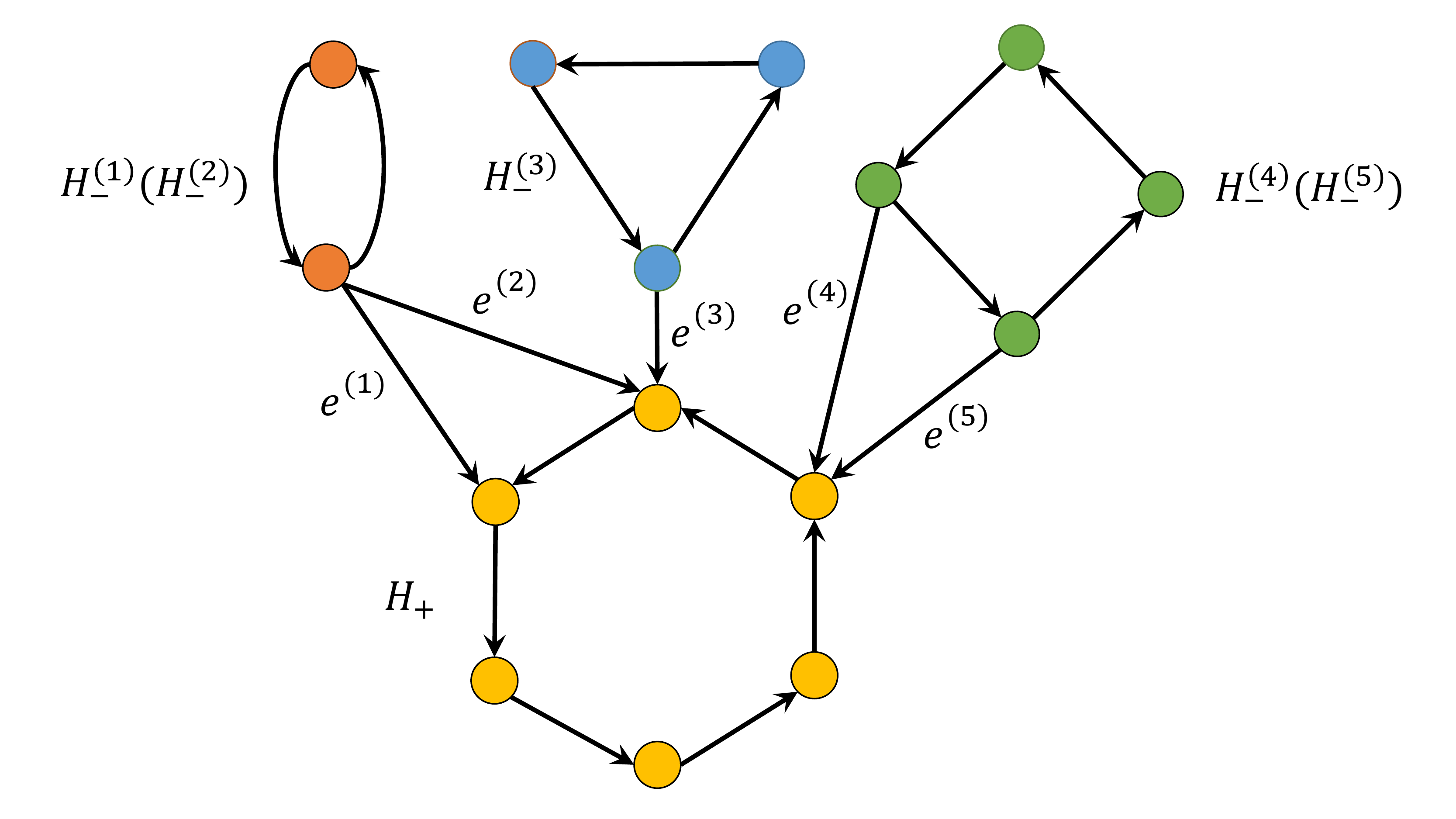}
	\caption{We provide here a reduced digraph $H$ obtained by patching $5$ elementary digraphs. Specifically, we have $H=\cup^5_{k = 1} J^{(k)}$, with $J^{(k)} = (H^{(k)}_-, H_+, e^{(k)})$ illustrated in the figure. Note, in particular, that $H^{(1)}_-=H^{(2)}_-$ and $H^{(4)}_-=H^{(5)}_-$.}
	\label{mild}
\end{figure}

Let $x(t)$ be a state of the CBN $H$, and $x^{(k)}(t)$ be the restriction of $x(t)$ to $J^{(k)}$. 
Since the Hadamard product is associative and commutative, there is no ambiguity to define 
$$
\bigcirc^m_{k = 1}\omega_+(x^{(k)}) := \omega_+(x^{(1)})\circ \cdots \circ \omega_+(x^{(m)}). 
$$
The following fact then generalizes Lemma~\ref{lem:operationvee}:

\begin{lemma}\label{lem:elemgeneral}
	Let $p^{(k)}_-$ be the length of cycle $H^{(k)}_-$, and $p_+$ be the length of cycle $H_+$. Let $N_{H}: = {\rm lcm}\{p_+, p^{(1)}_-,\ldots, p^{(m)}_-\}$. For any initial condition $x(0)$ of the CBN $H$ and any $l > 0$, 
	\begin{equation}\label{eq:definestateK}
	\left\{
	\begin{array}{l}
	x^{(k)}_-(l N_H) = x^{(k)}_-(0),\quad \forall k = 1,\ldots, m, \\
	x_+(l N_H) = \bigcirc^m_{k = 1}\omega_+(x^{(k)}(0)).
	\end{array}
	\right. 
	\end{equation}\,  
\end{lemma}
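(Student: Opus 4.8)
The plan is to reduce Lemma~\ref{lem:elemgeneral} to a repeated application of Lemma~\ref{lem:mapomega} together with the same neighborhood-decomposition idea used in the proof of Lemma~\ref{lem:operationvee}. The statement about the ``lower'' cycles, namely $x^{(k)}_-(lN_H) = x^{(k)}_-(0)$, is immediate: each $H^{(k)}_-$ is a cycle with no incoming edges from outside (there is no path from $H_+$ back into any $H^{(k)}_-$), so on $H^{(k)}_-$ the dynamics is a pure cyclic shift of period $p^{(k)}_-$, and $N_H$ is a multiple of $p^{(k)}_-$. If several $H^{(k)}_-$ coincide as subgraphs (as in the figure, $H^{(1)}_-=H^{(2)}_-$) this does not matter, since restricting $x$ to that common cycle gives a well-defined state evolving as a shift.

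The core is the identity for $x_+(lN_H)$. First I would fix a vertex $b_i$ of $H_+$ and observe that, since every in-neighbor chain reaching $b_i$ from outside $H_+$ must enter $H_+$ through exactly one of the connecting edges $e^{(k)}$, we have the decomposition
\begin{equation}
{\mathcal N}^{lN_H}_{\rm in}(b_i) = \bigcup^m_{k = 1}{\mathcal N}^{lN_H}_{\rm in}(b_i; J^{(k)}),\nonumber
\end{equation}
where the sets ${\mathcal N}^{lN_H}_{\rm in}(b_i; J^{(k)})$ pairwise intersect only in the singleton $\{b_i\}$ (they each contain $b_i$ because $p_+\mid lN_H$, and their parts lying in the distinct lower cycles $H^{(k)}_-$ are disjoint). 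Using $x_{k}(0)\in\{0,1\}$ so that $x_k(0)^2 = x_k(0)$, I then factor
\begin{equation}
x_{+,i}(lN_H) = \prod_{v_k\in {\mathcal N}^{lN_H}_{\rm in}(b_i)} x_k(0) = \prod^m_{k=1}\ \prod_{v_{k'}\in {\mathcal N}^{lN_H}_{\rm in}(b_i; J^{(k)})} x^{(k)}_{k'}(0),\nonumber
\end{equation}
the repeated occurrences of $x_{+,i}(0)$ across factors being harmless by idempotency. Finally, for each fixed $k$, since $N_H$ is a common multiple of $p^{(k)}_-$ and $p_+$, it is in particular a positive multiple of $N_{J^{(k)}} = {\rm lcm}\{p^{(k)}_-,p_+\}$; applying Lemma~\ref{lem:mapomega} to the CBN $J^{(k)}$ with initial condition $x^{(k)}(0)$ (or, more directly, repeating the neighborhood computation from its proof) identifies the $k$-th inner product with $\omega_{+,i}(x^{(k)}(0))$. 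This holds for all $l>0$, so $x_{+,i}(lN_H) = \prod^m_{k=1}\omega_{+,i}(x^{(k)}(0))$, which is exactly the $i$-th entry of $\bigcirc^m_{k=1}\omega_+(x^{(k)}(0))$. Ranging over all $b_i$ completes the proof.

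The main obstacle, and the step requiring the most care, is justifying the decomposition of ${\mathcal N}^{lN_H}_{\rm in}(b_i)$ and the claim that the pieces meet only in $\{b_i\}$: one must argue that any walk of length $lN_H$ ending at $b_i$ either stays entirely within $H_+$ (contributing the $\{b_i\}$ term, via $p_+\mid lN_H$) or, tracing it backward, leaves $H_+$ through a unique edge $e^{(k)}$ and thereafter lives in $J^{(k)}$ — and that two such walks through different edges $e^{(k)}$, $e^{(k')}$ cannot reach a common vertex outside $H_+$ unless $H^{(k)}_- = H^{(k')}_-$, in which case the length bookkeeping must still be checked so that the corresponding $\omega_{+,i}$ contributions are consistent with treating $J^{(k)}$ and $J^{(k')}$ separately. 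Once the geometry of these in-neighborhoods is pinned down, the algebraic identity follows mechanically from Lemma~\ref{lem:mapomega} and idempotency of Boolean multiplication, exactly as in the two-elementary-digraph case of Lemma~\ref{lem:operationvee}.
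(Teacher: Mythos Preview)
Your proposal is correct and follows exactly the approach the paper intends: the paper itself writes ``We omit the proof as it is similar to the proof of Lemma~\ref{lem:operationvee},'' and your argument is precisely the natural extension of that proof from two elementary subgraphs to $m$, using the same neighborhood decomposition ${\mathcal N}^{lN_H}_{\rm in}(b_i) = \bigcup_{k=1}^m {\mathcal N}^{lN_H}_{\rm in}(b_i; J^{(k)})$ together with idempotency and Lemma~\ref{lem:mapomega}. One small remark: the pairwise intersections need not be exactly $\{b_i\}$ when two lower cycles coincide (as you note in your final paragraph), but this is irrelevant to the factorization since idempotency handles arbitrary overlaps, not just singleton ones --- so the ``obstacle'' you flag is milder than you suggest.
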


We omit the proof as it is similar to the proof of Lemma~\ref{lem:operationvee}. 
%In this subsection, we have thus characterized the asymptotic behaviors of a special class of weakly connected digraphs, namely the digraphs that can be obtained by patching together several elementary digraphs. A few results established here will be used in the next subsection, which deals with the asymptotic behavior of a general reduced digraph.     

%\begin{remark}
%Note that~\eqref{eq:definestateK} is well defined even if $K$ is of type~i)  because by definition, $\omega_-$ is the identity map.  Similarly, the second item of Lemma~\ref{lem:operationvee} implies that  after the time step~$T_K$, the dynamics~$x(t)$ proceeds as if~$K$ was comprised of three (or two if $K$ is of type~i)) disjoint cycles~$K'_-$, $K''_-$, and $K_+$. Moreover, the period of $x_+(t)$ is exactly the order of $\omega_+(x'(0)) \circ \omega_+(x''(0))$.  
%\end{remark}

\subsection{On a general reduced system}\label{ssec:general}
%Let~$z$ be an arbitrary state of the regularized system~$H^\star$. For a given $H$  With a slight abuse of notation,
We now extend the results established in the previous subsections to the case where $H$ is an arbitrary reduced digraph. But for simplicity of exposition, we assume that all cycles (i.e. the strongly connected components) of $H$ have positive lengths. The analysis for the most general case follows similarly. A discussion will be included at the end of the section.   

Our objective here is to generalize the map~$\omega$ to a map~$\Omega$, which sends an initial condition~$x$ of the  system~$H$ to a state $\Omega(x)$ in the corresponding periodic orbit. 
In particular,  
we show below that the map $\Omega$ can be obtained by repeatedly applying the map~$\omega$ and the Hadamard product~``$\circ$''. 

{\em Definition of $\Omega$.} 
Partition $\cS(H)$ into $\cS(H) = \sqcup^L_{l = 0} \cS^l(H)$.  %where $\cS^0(H)$ is the collection of the maximal elements and each $\cS^{l}(H)$ is the immediate successor of $\cS^{l-1}(H)$ for $l = 1,\ldots, L$.  
For a given state~$x$, we define $\Omega(x)$ by specifying its value on each strongly connected component $H_i$, denoted by $\Omega_i(x)$. The definition will be carried out by induction on the number~$l$:

{\em Base case $l = 0$.} For each $H_i = (U_i, F_i)\in \cS^0(H)$, we define 
$
\Omega_i(x) := x_{U_i} 
$, i.e., $\Omega$ is the identity map when restricted to each~$H_i\in \cS^0(H)$.

{\em Inductive step.} We now assume that $\Omega_j(x)$ has been defined for every  $H_j\in \sqcup^{l-1}_{k = 0}\cS^k(H)$, for some $l\ge 1$. We now define $\Omega_i(x)$ for each $H_i\in \cS^{l}(H)$. Since~$H$ is connected, for a given $H_i\in \cS^{l}(H)$, there exists at least one $H_j\in \cS^{k}(H)$, for $k < l$, and an edge~$e$ from~$H_j$ to~$H_i$. Thus, there exists at least an elementary subgraph~$J = (J_-, J_+, e)$ of $H$ such that $J_- = H_j$, $J_+ = H_i$, and~$e$ is the unique edge from $J_-$ to $J_+$.  
We now let 
${\rm Elem}(H_i)$ 
be the collection of any such elementary subgraphs of $H$, i.e., if $J\in {\rm Elem}(H_i)$, then $J_+ = H_i$.    

Now, for a given~$J\in {\rm Elem}(H_i)$, we define a state $x^{J}$ on~$J$ as follows: Let $x^{J}_-$ (resp. $x^{J}_+$) be the restriction of $x^J$ on $J_-$ (resp. $J_+$). 
Suppose that $J_- = H_j$ for some $H_j \in \cS^{k}(H)$ with $k < l$; then, $\Omega_j(x)$ is defined by the induction hypothesis, and we set 
$$
x^J_- := \Omega_j(x) \hspace{10pt} \mbox{and} \hspace{10pt} x^J_+ := x_{U_i}. 
$$   
%We enumerate ${\rm Elem}(H_i)$ explicitly as ${\rm Elem}(H_i) = \{J_1, \ldots, J_k\}$. 
With the $x^{J}$'s as defined above, we then set %define the state $\Omega_i(x)$ on $H_i$ as follows:  
\begin{eqnarray}\label{Omega}
\Omega_i(x) := \bigcirc_{J\in {\rm Elem}(H_i)}\omega_+(x^{J}).  
\end{eqnarray}
%Note that the operation~``$\circ$" is associative, and hence $\Omega_i(x)$ is well defined.  
%In the case ${\rm Elem}(H_i)$ is comprised only of one element~$J$, we simply set $\Omega_i(x) := \omega_+(x^J)$.  
We have thus defined the map~$\Omega$. Note that for any $x\in \F^{|H|}$, we have $\Omega(x) \le x$. We further illustrate $\Omega$ via the following example. 
%{\color{red} Need a more complicated example, maybe three layers?} 

{\color{black}
	{\em Example.}
	We consider the reduced system $H$ in Fig.~\ref{omega}, with the initial conditions of the vertices given on the left of the arrows. There are four strongly connected components (cycles), %each of which is in a different color. 
	labeled as $H_i = (U_i, F_i)$ for $i = 1,\ldots, 4$.  
	It should be clear that $\cS^0(H)=\{H_1,H_2\}$, $\cS^1(H)=\{H_3\}$, and $\cS^2(H)=\{H_4\}.$ 
	We now illustrate the map $\Omega$. First, note that $\Omega_1$ and $\Omega_2$ are the identity maps when restricted to $U_1$ and $U_2$. Next, for $\Omega_3$, we have that ${\rm Elem}(H_3)=\{J_1,J_2\}$ where $J_1:=(H_1,H_3,e_1)$ and $J_2:=(H_2,H_3,e_2)$, and hence %We apply $\omega$ to each $J_i$. Let $x_{-_i}:=\Omega_i(x)$. Consider $J_1$, we have that $p_-=2, p_+=6$, and $N_{J_1}=lcm\{2,6\}=6$. Thus,
	%$$
	%\omega_{+_1,i}(x^{J_1}) := x_{U_3,i} \prod^{6/6}_{j = 1}  x_{-_1,(i-j\cdot 6) \bmod 2}.
	%$$
	%Consider $J_2$, we have that $p_-=3, p_+=6$, and $N_{J_2}=lcm\{3,6\}=6$. Thus,
	%$$
	%\omega_{+_2,i}(x^{J_2}) := x_{U_3,i} \prod^{6/6}_{j = 1}  x_{-_2,(i-j\cdot 6) \bmod 3}.
	%$$
	%From~\eqref{Omega}, we obtain that 
	$$
	\Omega_3(x) = \bigcirc_{J\in \{J_1,J_2\}}\omega_+(x^{J})=\omega_{+_1}(x^{J_1})\circ\omega_{+_2}(x^{J_2}).  
	$$
	Finally, for $\Omega_4$, we have that ${\rm Elem}(H_4)=\{J_3,J_4\}$ where $J_3:=(H_1,H_4,e_3)$ and $J_4:=(H_3,H_4,e_4)$, and hence %Then, for each $J\in {\rm Elem}(H_4)$, we apply $\omega$ respectively. Consider $J_3$, we have that $p_-=2, p_+=4$, and $N_{J_3}=lcm\{2,4\}=4$. Thus,
	%$$
	%\omega_{+_3,i}(x^{J_3}) := x_{U_4,i} \prod^{4/4}_{j = 1}  x_{-_1,(i-j\cdot 4) \bmod 2}.
	%$$
	%Consider $J_4$, we have that $p_-=6, p_+=4$, and $N_{J_4}=lcm\{6,4\}=12$. Thus,
	%$$
	%\omega_{+_4,i}(x^{J_4}) := x_{U_4,i} \prod^{12/4}_{j = 1}  x_{-_3,(i-j\cdot 4) \bmod 6}.
	%$$
	%From~\eqref{Omega}, we obtain that 
	$$
	\Omega_4(x) = \bigcirc_{J\in \{J_3,J_4\}}\omega_+(x^{J})=\omega_{+_3}(x^{J_3})\circ\omega_{+_4}(x^{J_4}).  
	$$
	The output values of the $\Omega_i$'s are illustrated in Fig.~\ref{omega}. 
}

\begin{figure}[ht]
	\centering
	\includegraphics[height=80mm]{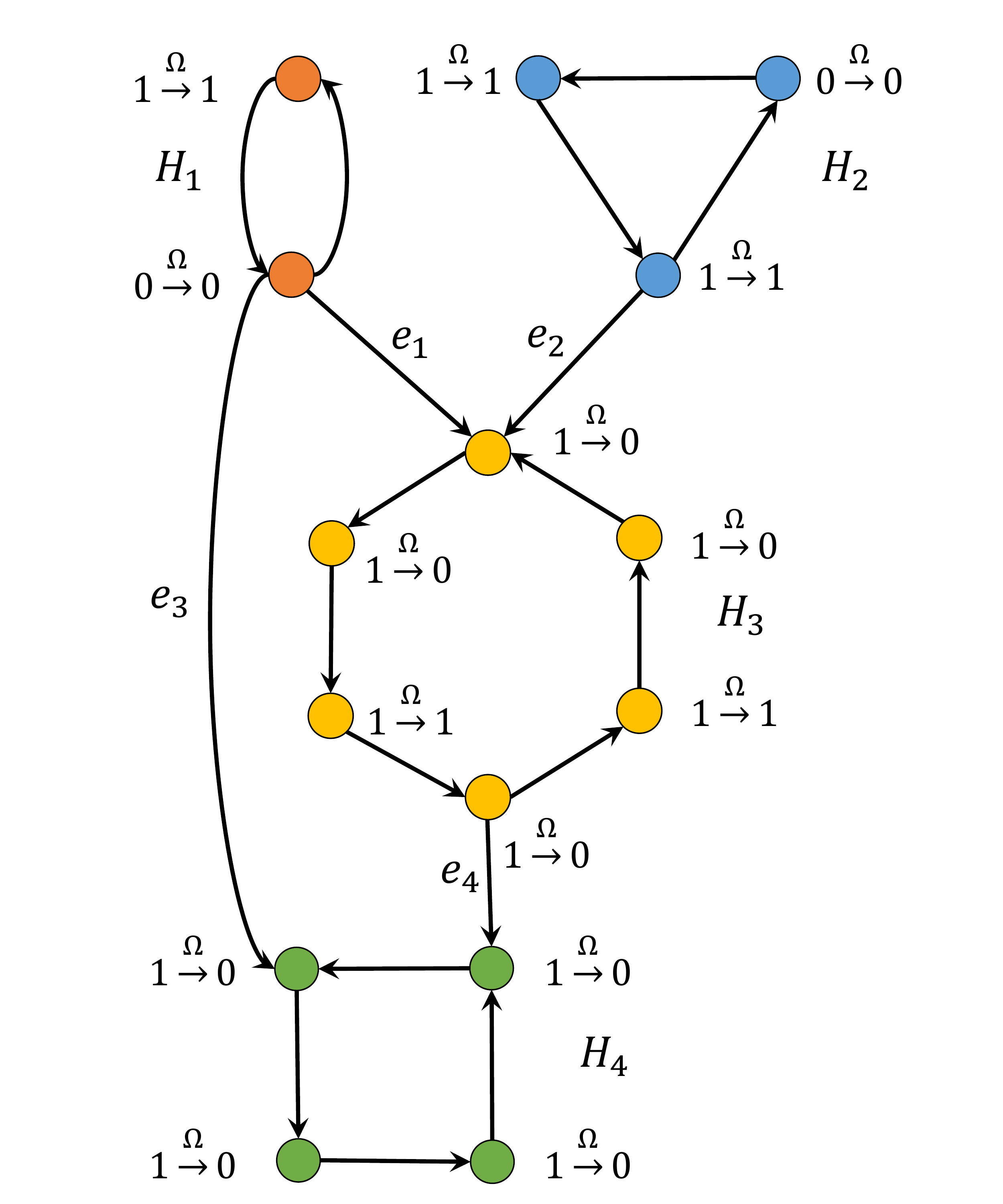}
	\caption{We illustrate here the map $\Omega$. The $0/1$'s on the left of the arrows are initial conditions, and the $0/1$'s on the right are the images under the map $\Omega$.}
	\label{omega}
\end{figure}

With the map $\Omega$ at hand, we state the second main result of the paper: %whose proof can be found in the full version of the paper to be made available as arXiv.

\begin{theorem}\label{maintheorem}
	Let $x(0)$ be an initial condition of a reduced system~$H$. 
	There exists a time step $N$, as a multiple of lengths of the cycles in $H$,  such that $x(l N) = \Omega(x(0))$ for any $l > 0$. 
	%Furthermore, for each strongly connected component $H_i$, we have
	%$$
	%x_{U_i}(t + 1) = f_{H_i}(x_{U_i}),
	%$$
	%for any $t \ge T_{H}$. 
\end{theorem}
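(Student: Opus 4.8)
\textbf{Proof plan for Theorem~\ref{maintheorem}.}
The plan is to prove the statement by induction on the layer index $l$ of the partition $\cS(H) = \sqcup^L_{l = 0}\cS^l(H)$, showing that for each strongly connected component $H_i \in \cS^l(H)$ there is a common multiple $N$ of all cycle lengths in $H$ such that $x_{U_i}(lN) = \Omega_i(x(0))$ for every $l>0$. First I would fix $N$ to be the least common multiple of the lengths of all cycles in $H$; every statement below will be phrased in terms of multiples of $N$, which keeps all the relevant ``time alignments'' automatic. The base case $l=0$ is immediate: each $H_i\in\cS^0(H)$ is a cycle of positive length receiving no edges from outside, so $x_{U_i}$ is periodic with period dividing $N$, hence $x_{U_i}(lN)=x_{U_i}(0)=\Omega_i(x(0))$ by the definition of $\Omega$ on the maximal layer.

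For the inductive step, assume the claim holds for all components in layers $0,\dots,l-1$, and fix $H_i\in\cS^l(H)$. The key observation is the analogue of the decomposition~\eqref{eq:decomposeNin}: for any vertex $b$ of the cycle $H_i$ and any $t$ that is a multiple of $N$,
\begin{equation*}
{\mathcal N}^{t}_{\rm in}(b) = \bigcup_{J\in{\rm Elem}(H_i)} {\mathcal N}^{t}_{\rm in}(b; J),
\end{equation*}
where the union is over the elementary subgraphs with top cycle $H_i$, and the sets on the right overlap only in $\{b\}$ and on vertices of $H_i$ itself (which contribute idempotently). This is because any in-walk of length $t$ from $b$ either stays inside $H_i$, or at some point leaves $H_i$ backwards through one of the edges $e^{(k)}$, after which it is confined to the elementary subgraph $J^{(k)}$ whose lower cycle is the immediate predecessor through $e^{(k)}$ — and since $t$ is a multiple of every cycle length, the walk's residue data within each such lower cycle is exactly what $\omega_+$ records. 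Factoring $x_b(t)=\prod_{v_k\in{\mathcal N}^t_{\rm in}(b)}x_k(0)$ according to this union, using $x_k^2=x_k$, gives $x_b(t)=\prod_{J\in{\rm Elem}(H_i)}\bigl(\prod_{v_k\in{\mathcal N}^t_{\rm in}(b;J)}x_k(0)\bigr)$.

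Now comes the step that uses the induction hypothesis. Each factor $\prod_{v_k\in{\mathcal N}^t_{\rm in}(b;J)}x_k(0)$ is, by the Lemma~\ref{lem:mapomega}-type argument applied inside the elementary digraph $J=(H_j,H_i,e)$, equal to $\omega_{+}$ evaluated at the state of $J$ whose lower part is the value of $x_{U_j}$ at the appropriate earlier time $t'$ (another multiple of $N$) and whose upper part is $x_{U_i}(0)$. By the induction hypothesis $x_{U_j}(t') = \Omega_j(x(0))$ for $t'=l'N$, $l'>0$, which is precisely the definition of $x^J_-$ in the construction of $\Omega$; the upper part matches $x^J_+ = x_{U_i}$ because $H_i\in\cS^l(H)$ receives no edges from $\cS^l(H)$ and its own in-walks of length a multiple of its cycle length return it to $x_{U_i}(0)$. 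Hence each factor equals $\omega_+(x^J)$ restricted to $b$, and multiplying over $J\in{\rm Elem}(H_i)$ gives exactly $\Omega_i(x(0))$ by~\eqref{Omega}. The main obstacle I anticipate is the careful bookkeeping in the decomposition of ${\mathcal N}^t_{\rm in}(b)$: one must verify that a backward walk, once it leaves $H_i$, cannot re-enter a different branch or return to $H_i$ in a way that corrupts the factorization — this rests on the DAG-of-cycles structure of a reduced digraph and on choosing $t$ a common multiple of all cycle lengths so that every ``phase'' lines up; the idempotency $x_k^2=x_k$ is what rescues us from the non-disjointness of the branches. The remaining generalization to components of zero loop number (singletons with constant value from the trimming assumption, or sinks) is handled exactly as the remark following the theorem indicates and I would only sketch it.
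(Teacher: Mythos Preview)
Your decomposition ${\mathcal N}^{t}_{\rm in}(b) = \bigcup_{J\in{\rm Elem}(H_i)} {\mathcal N}^{t}_{\rm in}(b; J)$ is false once $H_i$ lies in a layer $l\ge 2$. A backward walk from $b\in H_i$ that exits through an edge $e$ into an immediate predecessor $H_j$ is \emph{not} thereafter confined to the elementary subgraph $J=(H_j,H_i,e)$: it can continue backward through edges into predecessors of $H_j$, landing at vertices that belong to no $J\in{\rm Elem}(H_i)$. Concretely, take three $2$-cycles $H_1\to H_2\to H_3$ (vertices $a_0,a_1;b_0,b_1;c_0,c_1$ with edges $a_1b_0$ and $b_1c_0$) and initial values $a=(0,1)$, $b=(1,1)$, $c=(1,1)$. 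Then $N=\operatorname{lcm}=2$, but one computes $x_{U_3}(2)=(1,1)$ while $\Omega_3(x(0))=(0,1)$; the zero in $a_0$ needs two passes of length $N$ to propagate to $H_3$. So your fixed choice $N=\operatorname{lcm}$ does not work, and the factorization over ${\rm Elem}(H_i)$ cannot be rescued by ``phase alignment.'' Your attempted fix---replacing the lower part of $x^J$ by $x_{U_j}(t')$ for some earlier $t'$---is not what the product $\prod_{v_k\in{\mathcal N}^t_{\rm in}(b;J)}x_k(0)$ computes; that product gives $\omega_+$ with lower part $x_{U_j}(0)$, not $x_{U_j}(t')$.

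The paper handles exactly this obstruction by a different device. It inducts on the total depth $L$ (not on the layer index), removes the bottom layer to get $H'$, and introduces two auxiliary initial conditions $x'(0)$ and $x''(0)$ that agree with $\Omega_{H'}(x_{U'}(0))$ on $U'$ but differ on the bottom layer. Monotonicity gives the sandwich $x'((l+1)N')\le x((l+1)N')=x''(lN')\le x'(lN')$, and the key point is that $x'_{U'}(0)$ is already in a periodic orbit of $H'$, so by item~2 of Theorem~\ref{pro:dynamics} the predecessors now behave as \emph{isolated} cycles---this is precisely what licenses the application of Lemma~\ref{lem:elemgeneral} to $H^\star_i$ and yields $x'(lN')=\Omega(x(0))$. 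The sandwich then forces $x(lN')=\Omega(x(0))$ for $l\ge 2$, i.e.\ $N=2N'$ (so $N$ grows like $2^{L-1}$ times the lcm, consistent with the counterexample above). Your plan is missing both the sandwiching step and the appeal to Theorem~\ref{pro:dynamics} that decouples the settled layers; without them the inductive step does not close.
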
 

\begin{Remark}
	Theorem~\ref{maintheorem}, combined with the second item of Theorem~\ref{pro:dynamics}, implies that from $t = N$, the dynamics of system $H$ proceeds as if $H$ was comprised of disjoint cycles $H_i$'s. Let $p_i$ be the length of $H_i$. Then, 
	%$$x_{i_j}(N + t) = x_{i_{(j - t)\bmod p_i}}(N) = \Omega_{i_{(j - t)\bmod p_i}}(x(0)),$$
	%for any $t \ge 0$. In particular, 
	$x_{U_i}(N + lp_i) = \Omega_i(x(0))$ for any $l \ge 0$. 
\end{Remark}

\subsection{Proof of Theorem~\ref{maintheorem}}\label{ssec:proofthm2}
We have $\cS(H) = \sqcup^L_{l = 0} \cS^l(H)$. 
The proof will be carried out by induction on the number~$L$. 

{\em Base case: $L = 1$.} 
In this case, the time step $N$ can be taken as the least common multiple of all cycle lengths. We show below that $x(l N) = \Omega(x)$ for any $l > 0$. First, note that if a cycle $H_i = (U_i,F_i)$ of length $p_i$ belongs to $\cS^0(H)$, then $x_{U_i}(l p_i) = x_{U_i}(0) = \Omega_i(x)$ for any $l \ge 0$. Since $p_i$ divides $N$, we have $x_{U_i}(l N) = \Omega_i(x)$. %for all $l \ge 0$. 

Next, we pick a cycle $H_i$ out of $\cS^1(H)$. Recall that ${\rm Elem}(H_i)$ is the collection of any elementary subgraph $J$ of $H$ such that $J_+ = H_i$. 
For convenience, we let $H^\star_i$ be the subgraph of $H$ obtained by taking the union of these elementary subgraphs, i.e., $H^\star_i = \cup_{J\in {\rm Elem}(H_i)} J$.  
It should be clear that the state $x_{U_i}(t)$ of $H_i$ depends only on the initial condition of $H^\star_i$, but not of any other component of $H$. Thus, we can appeal to Lemma~\ref{lem:elemgeneral} and obtain that for any $l> 0$,
$$
x_{U_i}(l N_{H^\star_{i}}) =  \bigcirc_{J\in {\rm Elem}(H_i)}\omega_+(x(0)^{J}) = \Omega_i(x(0)),
$$
where $N_{H^\star_{i}}$ is the least common multiple of the lengths of cycles in the subgraph $H^\star_i$. Since $N_{H^\star_{i}}$ divides $N$, we have $x_{U_i}(l N) = \Omega_i(x(0))$ for all $l > 0$. This holds for all $H_i\in \cS^1(H)$, and hence $x(l N) = \Omega(x(0))$ for all $l > 0$.  

{\em Inductive step.} We assume that Theorem~\ref{maintheorem} holds for $L -1$ for $L > 1$, and prove for $L$. We first let 
$U':= U - \sqcup_{H_i\in \cS^{L}(H)} U_i $, i.e., $U'_i$ is the collection of vertices which do not belong to any $H_i$ in $\cS^{L}(H)$. We then let $H'$ be the subgraph of $H$ induced by $U'$. Note that the state $x_{U'}(t)$ of $H'$ depends only on the initial condition of $H'$, but not of any $H_i\in \cS^{L}(H)$. In other words, if only $x_{U'}(t)$ needs to be determined, then we can simply investigate the system $H'$ %(obtained by removing out any other component of $H$) 
with $x_{U'}(0)$ the initial condition.

The digraph $H'$ may be comprised of multiple (weakly) connected components. We label them as $H'^{(1)},\ldots,H'^{(m)}$. For each connected component~$H'^{(k)}$, we decompose $\cS(H'^{(k)}) = \sqcup^{L^{(k)}}_{l =1}\cS(H'^{(k)})$. It should be clear that $L^{(k)} < L$ for all $k =1,\ldots, m$. Thus, the induction hypothesis applies to every $H'^{(k)}$, and hence to their (disjoint) union~$H'$.  
In particular, there exists a time step $N'$, as a multiple of lengths of the cycles in $H'$, such that $x_{U'}(l N') = \Omega_{H'}(x_{U'}(0))$  for any $l>0$, where the subindex $H'$ in $\Omega_{H'}$ indicates that the map $\Omega$ is applied to the digraph $H'$. %Note that if we let $N_{H'}$ be the least common multiple of lengths of the cycles in $H'$, then $x_{U'}(N' + l N_{H'}) = \Omega_{H'}(x_{U'}(0))$ for all $l \ge 0$.   
Note that if we let $N''$ be a multiple of $N'$, then it still holds that $x_{U'}(l N'') = \Omega_{H'}(x_{U'}(0))$ for all $l > 0$. Thus, we increase $N'$ if necessary so that $N'$ is a multiple of lengths of all cycles in~$H$.

We now consider the state $x_{U_i}(t)$ for $H_i \in \cS^{L}(H)$. 
%We show below that 
%\begin{equation}\label{eq:pfthm1}
%x_{U_i}(2l N') = \Omega_i(x(0)), \quad, \forall l > 0.
%\end{equation}
%Note that if this holds, then the desired time step $N$ can be chosen to be $2N'$. 
To proceed, we first introduce two new initial conditions $x'(0)$ and $x''(0)$ for the system $H$: The first one $x'(0)$ is given by 
$$
\left\{
\begin{array}{l}
x'_{U'}(0) := \Omega_{H'}(x_{U'}(0)), \\ 
x'_{U_i}(0) := x_{U_i}(0), \quad \forall H_i\in \cS^{L}(H).
\end{array}
\right.
$$
The other one $x''(0)$ is given by
$$
\left\{
\begin{array}{l}
x''_{U'}(0) := \Omega_{H'}(x_{U'}(0)), \\ 
x''_{U_i}(0) := x_{U_i}(N'), \quad \forall H_i\in \cS^{L}(H).
\end{array}
\right.
$$

We state below a few facts about $x'(0)$ and $x''(0)$. First, for $x'(0)$, note that $\Omega_{H'}(x_{U'}(0))\le x_{U'}(0)$, and hence $x'(0) \le x(0)$. Next, for $x''(0)$, since $N'$ is a multiple of $p_i$, we have $x_{U_i}(N') \le x_{U_i}(0)$ for any $H_i\in \cS^{L}(H)$, and hence $x''(0) \le x'(0)$. We thus have $x''(0) \le x'(0) \le x(0)$, and hence for any $l  \ge 0$, 
\begin{equation}\label{eq:pfthm2}
x''(l N') \le x'(l N') \le x(l N').
\end{equation}
On the other hand, by construction, we have $x(N') = x''(0)$, and hence for any $l \ge 0$, 
\begin{equation}\label{eq:pfthm3}
x((l +1 )N') = x''(l N').
\end{equation}
Combining~\eqref{eq:pfthm2} and~\eqref{eq:pfthm3}, we obtain for $l \ge 0$, 
\begin{equation}\label{eq:pfthm4}
x'((l+1)N') \le x((l+1) N') = x''(l N')\le x'(l N').
\end{equation}

We show below that 
\begin{equation}\label{eq:pfthm5}
x'(l N') = \Omega(x(0)), \quad \forall l > 0.
\end{equation}
It suffices to show that for any $H_i\in \cS^{L}(H)$, we have 
$
x'_{U_i}(l N') = \Omega_i(x(0))
$ for any $l > 0$. 
Fix an $H_i\in \cS^{L}(H)$, and similarly, let $H^\star_i:= \cup_{J\in {\rm Elem}(H_i)} J$. Since $x'_{U'}(0)$ is already in a periodic orbit of system $H'$, we know from Theorem~\ref{pro:dynamics} that the dynamics of $x'_{U'}(t)$ proceeds as if $H'$ was comprised of disjoint cycles. 
This, in particular, implies that the state $x'_{U_i}(t)$ of $H_i$ depends only on the initial condition of $H^\star_i$, but not of any other component of $H$. We thus appeal again to Lemma~\ref{lem:elemgeneral} and obtain that for any $l> 0$,
$$
x'_{U_i}(l N_{H^\star_{i}}) =  \bigcirc_{J\in {\rm Elem}(H_i)}\omega_+(x(0)^{J}) = \Omega_i(x(0)),
$$
where $N_{H^\star_{i}}$ is the least common multiple of the lengths of cycles in the subgraph $H^\star_i$. Since $N_{H^\star_{i}}$ divides $N'$, we have $x'_{U_i}(l N') = \Omega_i(x(0))$ for all $l > 0$. We have thus shown that~\eqref{eq:pfthm5} holds.  

Now, from~\eqref{eq:pfthm4} and~\eqref{eq:pfthm5}, we have
$x(l N') = \Omega(x(0))$ for all $l \ge 2$.
Thus, if we let $N := 2N'$, then $x(l N) = \Omega(x(0))$ for all $l > 0$. This completes the proof. 
\hfill\qed

\begin{Remark}
	Taking a closer look at the proof of Theorem~\ref{maintheorem}, we have the following fact about the transition time for the system $H$ to enter a periodic orbit from an arbitrary initial condition. Let $N_H$ be the least common multiple of lengths of the cycles in $H$. Then, for any initial condition $x(0)$ of system $H$, $x(2^{L - 1}N_H)$ is in a periodic orbit. %Thus, $2^{L - 1}N_H$ can be viewed as an upper-bound for the transition time.
\end{Remark}

We conclude this subsection with a discussion on the case where the reduced digraph $H$ has a cycle of length~$0$, i.e., a single vertex {\em without} self-arc. The analysis for the asymptotic behavior for such a case does not differ too much from the analysis we had earlier. We sketch below a few key modifications one needs to accommodate into the existing arguments. 

First, instead of using $\cS(H)$, we define $\cS_+(H)$ as the collection of cycles of {\em positive lengths} in $H$. Since $\cS_+(H)$ is a subset of $\cS(H)$, the partial order ``$\succ$'' defined on $\cS(H)$ induces a partial order on $\cS_+(H)$. Similarly, we decompose $\cS_+(H) = \sqcup^L_{l = 0}\cS^l(H)$ so that $\cS^l(H)$ is an immediate successor of $\cS^{l-1}(H)$.  

Next, we modify the definition of an elementary digraph $J$ so that it is now comprised of an ``upper'' cycle $J_-$, a ``lower'' cycle $J_+$, and a {\em path} (instead of an edge) from $J_-$ to $J_+$. Note that with the modified version, we can still ``patch'' multiple elementary digraphs if their ``down'' cycles share the same length.   

Correspondingly, the map $\omega$ also needs to be rectified. 
Let the connecting path of an elementary graph $J$ be comprised of more than one edge (otherwise, $J$ agrees with our earlier definition). We denote it by $J_0 = a_{p_- -1} c_0\cdots c_{m-1} b_0$ for  $m \ge 1$. Let $\omega_-, \omega_+$ and $\omega_0$ be defined by restricting $\omega$ to $V_-$, $V_+$ and the vertices $c_i$'s, respectively. We still let $\omega_-$ be the identity map. For $\omega_0$, we let $\omega_{0,i}(x) := x_{-,i \bmod p_-}$ for any $i = 0,\ldots, m-1$. For $\omega_+$, we let 
\begin{align*}
&\omega_{+,i}(x) :=\\
& x_{+,i} \prod^{\lfloor \frac{i + m}{p_+}\rfloor}_{j = 1}x_{0,(i - jp_+) \bmod m} 
\prod^{\lfloor \frac{i + m}{p_+}\rfloor + \frac{N_J}{p_+}}_{ j = \lfloor \frac{i + m}{p_+}\rfloor + 1} x_{-,(i - jp_+) \bmod p_-},
\end{align*}
for any $i = 0,\ldots, p_+-1$, where $\lfloor \cdot \rfloor$ is the floor function. Lemmas~\ref{lem:mapomega},~\ref{lem:operationvee}, and~\ref{lem:elemgeneral} still hold. Yet, the time steps $N_J$, $N_K$ and $N_H$ in the statements are  multiples of lengths of the cycles in $J$, $K$ and $H$, respectively, but may not be the least common multiples.  
We provide two examples of the modified map $\omega$ in Fig.~\ref{general}.

\begin{figure}[ht]
	\centering
	\includegraphics[height=50mm]{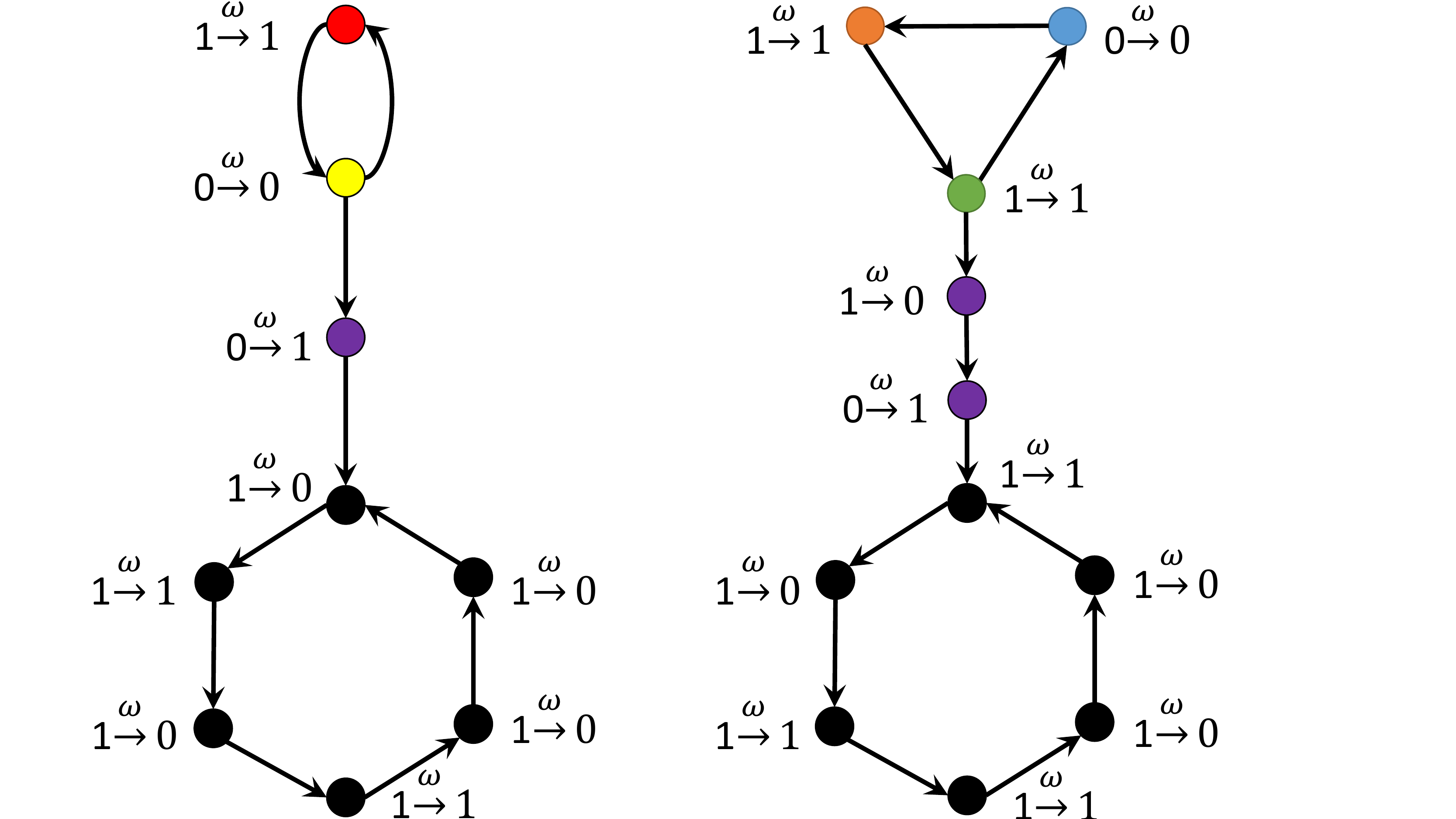}
	\caption{We illustrate here the modified map $\omega$. The $0/1$'s on the left of the arrows are initial conditions, and the $0/1$'s on the right are the images under the map $\omega$.}
	\label{general}
\end{figure}

Finally, for the map $\Omega$, we modify the definition by replacing $\cS(H)$ with $\cS_+(H)$ and using the rectified elementary graphs and the map $\omega$. Further, we note that the co-domain of $\Omega$ is not the entire state of $H$, but rather its restriction to cycles of positive lengths. On the other hand, if a state~$x(t)$ is in a periodic orbit of system $H$, then $x(t)$ can be uniquely determined by the $x_{U_i}(t)$'s where the $H_i$'s are cycles in $H$ of positive lengths. Thus, the map $\Omega$ completely characterizes the asymptotic behavior of system $H$.

\section{Conclusions and Outlooks}\label{end}
We have characterized, in this paper, the asymptotic behavior of CBNs over weakly connected digraphs. In particular, we have provided a complete answer to the question of which periodic orbit the system will enter with a given initial condition.  
Along the analysis, we have introduced a new graphical-approach, termed {\em system reduction}, for studying this type of problem. We have shown that the reduced system uniquely determines the asymptotic behavior of the original system. Such an approach significantly simplifies the analysis, and moreover, it could be modified and applied to other types of Boolean networks.  %Future research can be pursued on the stability and controllability issues of CBNs with weakly connected digraph.

There are several research directions we will pursue in the future as an outgrowth of this study. First, we recall from Corollary~\ref{lem3:thm1} that the period of any periodic orbit of a CBN $G$ divides $N_G$, the least common multiple of lengths of the cycles in $G$. Yet, we note that it is not true that any divisor of $N_G$ can be the period of a periodic orbit. We can thus ask the question of: {\em what are the possible periods of periodic orbits of the CBN $G$?} We can further ask: {\em what is the relationship between the network topology $G$ and the possible periods?}

Second, we can address issues about the stability structure of the periodic orbits of a CBN. Consider the situation where $x(0)$ is in a periodic orbit of a CBN G, and yet at a certain time step~$t$, there is a vertex, say $v_i$, whose value does not follow the update rule, i.e., $x_i(t) = \bar f_i(x(t - 1))$. Then, with the current state $x(t)$ as an ``initial condition'', the system will enter another (possibly the same) periodic orbit. The stability structure of a CBN $G$ is then about characterizing all such transitions from one periodic orbit to another. This problem has been investigated in~\cite{stabilityfull}, but only for the case where $G$ is strongly connected.

Third, we are also interested in orbit/state-controllability of weakly connected CBNs: Assuming that there is a selected subset of variables whose values are determined by external inputs, we ask {\em whether one is able to steer the system to any desired periodic orbit or state, by controlling the sequence of values of the external inputs.} For steering the system to any desired state, the question has been addressed in~\cite{gaocontrollability} and~\cite{weiss2017minimal} most recently. For steering the system to any desired periodic orbit,~\cite{gaocontrollability} has addressed the case when the underlying graph is strongly connected. We believe that the results developed in this paper will be be useful in investigating the orbit-controllability of general weakly connected CBNs.

\bibliographystyle{unsrt}
\bibliography{references}

% biography section
% 
% If you have an EPS/PDF photo (graphicx package needed) extra braces are
% needed around the contents of the optional argument to biography to prevent
% the LaTeX parser from getting confused when it sees the complicated
% \includegraphics command within an optional argument. (You could create
% your own custom macro containing the \includegraphics command to make things
% simpler here.)

\begin{IEEEbiography} [{\includegraphics[width=1in,height=1.25in,clip,keepaspectratio]{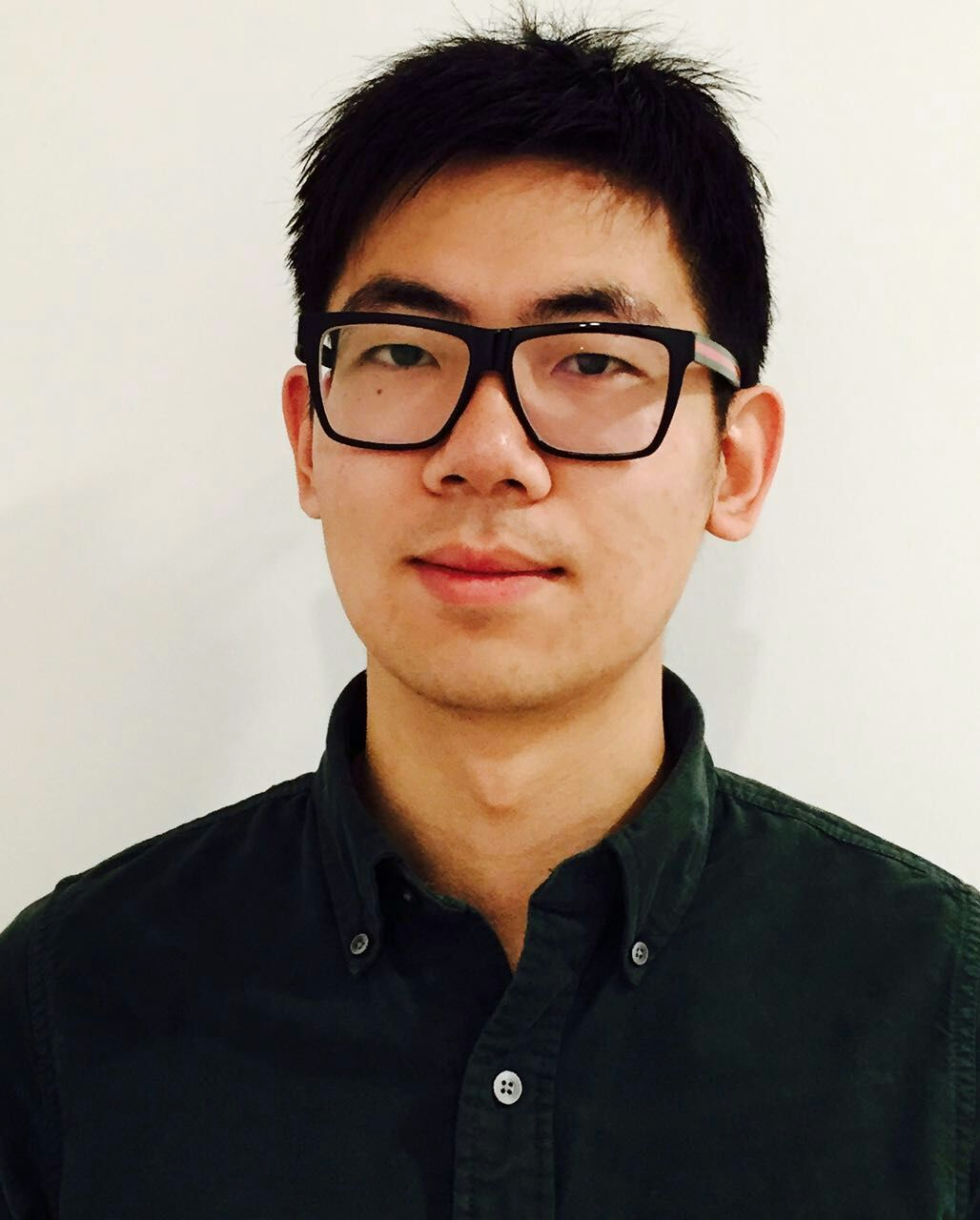}}]
	{Xudong Chen} is now Assistant Professor in the Department of Electrical, Computer and Energy Engineering at the University of Colorado, Boulder. Before that, he was a postdoctoral fellow in the Coordinated Science Laboratory at the University of Illinois, Urbana-Champaign. He obtained the B.S. degree from Tsinghua University, Beijing, China, in 2009, and the Ph.D. degree in Electrical Engineering from Harvard University, Cambridge, Massachusetts, in 2014. His research interests are in the area of control theory, stochastic processes, optimization, game theory and their applications in modeling, control, and estimation of networked systems and ensemble systems. 
\end{IEEEbiography}

\begin{IEEEbiography} [{\includegraphics[width=1in,height=1.25in,clip,keepaspectratio]{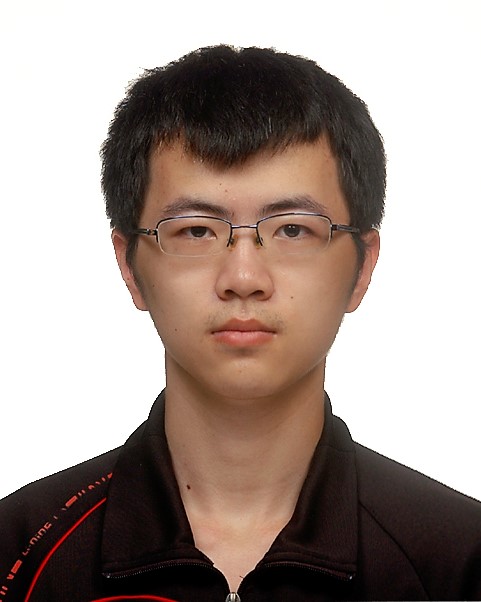}}]
	{Zuguang Gao} received the M.S. degree (in 2017) and the B.S. degree (in 2015), both in Electrical Engineering, from the University of Illinois at Urbana-Champaign. He is currently pursuing Ph.D. in Management Science/Operations Management at the University of Chicago Booth School of Business. His research interests include queueing and scheduling theory, algorithm design and analysis, game theory, control and modeling of complex systems, large scale networked dynamical systems and their applications.
\end{IEEEbiography}

\begin{IEEEbiography} [{\includegraphics[width=1in,height=1.25in,clip,keepaspectratio]{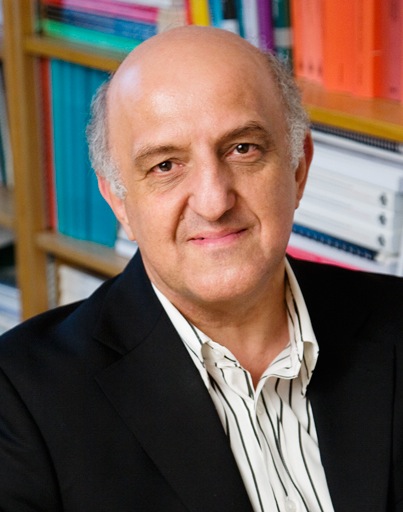}}]
	{\bf Tamer Ba\c{s}ar} (S'71-M'73-SM'79-F'83-LF'13) is with the University of Illinois at Urbana-Champaign, where he holds the academic positions of  Swanlund Endowed Chair;    
Center for Advanced Study Professor of  Electrical and Computer Engineering; 
Research Professor at the Coordinated Science
Laboratory; and Research Professor  at the Information Trust Institute. 
He is also the Director of the Center for Advanced Study.
He received B.S.E.E. from Robert College, Istanbul,
and M.S., M.Phil, and Ph.D. from Yale University. He is a member of the US National Academy
of Engineering,  member of the  European Academy of Sciences, and Fellow of IEEE, IFAC (International Federation of Automatic Control) and SIAM (Society for Industrial and Applied Mathematics), and has served as president of IEEE CSS (Control Systems  Society), ISDG (International Society of Dynamic Games), and AACC (American Automatic Control Council). He has received several awards and recognitions over the years, including the
highest awards of IEEE CSS, IFAC, AACC, and ISDG, the IEEE Control Systems Award, and a number of international honorary doctorates and professorships. He has over 900 publications in systems, control, communications, networks,
and dynamic games, including books on non-cooperative dynamic game theory, robust control,
network security, wireless and communication networks, and stochastic networked control. He was
the Editor-in-Chief of Automatica between 2004 and 2014, and is currently  editor of several book series. His current research interests include stochastic teams, games, and networks; security; distributed computation and learning; and cyber-physical systems.
\end{IEEEbiography}

%\begin{IEEEbiography}[{\includegraphics[width=1in,height=1.25in,clip,keepaspectratio]{mshell}}]{Michael Shell}
% or if you just want to reserve a space for a photo:

%\begin{IEEEbiography}{Michael Shell}
%Biography text here.
%\end{IEEEbiography}

% if you will not have a photo at all:
%\begin{IEEEbiographynophoto}{John Doe}
%Biography text here.
%\end{IEEEbiographynophoto}

% insert where needed to balance the two columns on the last page with
% biographies
%\newpage

%\begin{IEEEbiographynophoto}{Jane Doe}
%Biography text here.
%\end{IEEEbiographynophoto}

% You can push biographies down or up by placing
% a \vfill before or after them. The appropriate
% use of \vfill depends on what kind of text is
% on the last page and whether or not the columns
% are being equalized.

%\vfill

% Can be used to pull up biographies so that the bottom of the last one
% is flush with the other column.
%\enlargethispage{-5in}

% that's all folks
\end{document}